\numberwithin{equation}{section}
\numberwithin{figure}{section}
\theoremstyle{plain}
\newtheorem{thm}{\protect\theoremname}[section]
\theoremstyle{definition}
\newtheorem{defn}[thm]{\protect\definitionname}
\newtheorem{summ}[thm]{\protect{Summary}}
\newtheorem{rem}[thm]{\protect\remarkname}
\theoremstyle{plain}
\newtheorem{prop}[thm]{\protect\propositionname}
\theoremstyle{definition}
\newtheorem{example}[thm]{\protect\examplename}
\theoremstyle{plain}
\newtheorem{cor}[thm]{\protect\corollaryname}
\theoremstyle{plain}
\newtheorem{lem}[thm]{\protect\lemmaname}
\tikzstyle{nc}=[fill=white, draw=black, shape=circle, minimum size=0.40cm]
\tikzstyle{nr}=[fill=white, draw=black, shape=rectangle, minimum width=0.60cm, minimum height=0.60cm]
\tikzstyle{nrg}=[fill={rgb,255: red,199; green,199; blue,199}, draw=black, shape=rectangle]
\tikzstyle{sr}=[fill=white, draw=black, shape=rectangle, minimum width=0.60cm, minimum height=0.40cm, inner sep=0pt]
\tikzstyle{lsr}=[fill=white, draw=black, shape=rectangle, minimum width=0.60cm, minimum height=0.40cm, inner sep=0pt, rotate=30]
\tikzstyle{rsr}=[fill=white, draw=black, shape=rectangle, minimum width=0.60cm, minimum height=0.40cm, inner sep=0pt, rotate=-30]
\tikzstyle{rrrsr}=[fill=white, draw=black, shape=rectangle, minimum width=0.60cm, minimum height=0.40cm, inner sep=0pt, rotate=-120]
\tikzstyle{ivsr}=[fill=none, draw=none, shape=rectangle, minimum width=0.60cm, minimum height=0.40cm, inner sep=0pt]
\tikzstyle{ivlsr}=[fill=none, draw=none, shape=rectangle, minimum width=0.60cm, minimum height=0.40cm, inner sep=0pt, rotate=30]
\tikzstyle{ivrsr}=[fill=none, draw=none, shape=rectangle, minimum width=0.60cm, minimum height=0.40cm, inner sep=0pt, rotate=-30]
\tikzstyle{ivrrrsr}=[fill=none, draw=none, shape=rectangle, minimum width=0.60cm, minimum height=0.40cm, inner sep=0pt, rotate=-120]
\tikzstyle{mr}=[fill=white, draw=black, shape=rectangle, minimum width=1.00cm, minimum height=0.50cm, inner sep=0pt]
\tikzstyle{product}=[fill={rgb,255: red,255; green,5; blue,80}, draw=black, shape=circle, inner sep=0pt, minimum size=0.15cm]
\tikzstyle{unit}=[fill={rgb,255: red,255; green,166; blue,217}, draw=black, shape=circle, inner sep=0pt, minimum size=0.15cm]
\tikzstyle{coproduct}=[fill={rgb,255: red,2; green,145; blue,255}, draw=black, shape=circle, inner sep=0pt, minimum size=0.15cm]
\tikzstyle{counit}=[fill={rgb,255: red,165; green,219; blue,255}, draw=black, shape=circle, inner sep=0pt, minimum size=0.15cm]
\tikzstyle{antipode}=[fill=white, draw=black, shape=circle, inner sep=0pt, minimum size=0.15cm]
\tikzstyle{region}=[fill={rgb,255: red,191; green,191; blue,191}, draw={rgb,255: red,191; green,191; blue,191}, shape=circle, minimum size=0.80cm]
\tikzstyle{boundarydisc}=[fill={rgb,255: red,128; green,128; blue,128}, draw=black, shape=circle, minimum size=0.80cm]
\tikzstyle{identity}=[fill=black, draw=black, shape=circle, inner sep=0pt, minimum size=0.15cm]
\tikzstyle{S}=[fill=white, draw=black, shape=rectangle, minimum width=0.15cm, minimum height=0.15cm]
\tikzstyle{S-}=[fill={rgb,255: red,199; green,199; blue,199}, draw=black, shape=rectangle, minimum width=0.15cm, minimum height=0.15cm]
\tikzstyle{dot}=[fill=white, draw=black, shape=circle, inner sep=0pt, minimum size=0.07cm]
\tikzstyle{bdot}=[fill=black, draw=black, shape=circle, inner sep=0pt, minimum size=0.07cm]
\tikzstyle{m}=[fill={rgb,255: red,255; green,5; blue,80}, draw={rgb,255: red,255; green,5; blue,80}, shape=circle, inner sep=0pt, minimum size=0.14cm, tikzit shape=circle]
\tikzstyle{mod}=[fill={rgb,255: red,172; green,229; blue,232}, draw={rgb,255: red,14; green,115; blue,177}, shape=circle]
\tikzstyle{smod}=[fill={rgb,255: red,172; green,229; blue,232}, draw={rgb,255: red,14; green,115; blue,177}, shape=circle, inner sep=0pt, minimum size=0.15cm]
\tikzstyle{smod1}=[fill={rgb,255: red,150; green,255; blue,138}, draw={rgb,255: red,5; green,130; blue,3}, shape=circle, inner sep=0pt, minimum size=0.15cm]
\tikzstyle{smod2}=[fill={rgb,255: red,255; green,202; blue,239}, draw={rgb,255: red,126; green,0; blue,124}, shape=circle, inner sep=0pt, minimum size=0.15cm]
\tikzstyle{smod3}=[fill={rgb,255: red,255; green,213; blue,164}, draw={rgb,255: red,121; green,57; blue,2}, shape=circle, inner sep=0pt, minimum size=0.15cm]
\tikzstyle{mid-nc}=[fill=white, draw=black, shape=circle, inner sep=0pt, minimum size=0.25cm]
\tikzstyle{di}=[draw=black, ->]
\tikzstyle{da}=[-, dashed]
\tikzstyle{da-di}=[dashed, ->]
\tikzstyle{th}=[-, thick]
\tikzstyle{th-di}=[->, thick]
\tikzstyle{th-da}=[-, dashed, thick]
\tikzstyle{gr}=[-, draw={rgb,255: red,0; green,186; blue,143}, thick]
\tikzstyle{gr-di}=[draw={rgb,255: red,0; green,186; blue,143}, ->, thick]
\tikzstyle{gr-da}=[-, dashed, draw={rgb,255: red,0; green,186; blue,143}, thick]
\tikzstyle{pu}=[-, draw={rgb,255: red,124; green,95; blue,239}, thick]
\tikzstyle{pu-da}=[-, dashed, draw={rgb,255: red,124; green,95; blue,239}, thick]
\tikzstyle{re}=[-, draw={rgb,255: red,255; green,5; blue,80}, thick]
\tikzstyle{re-di}=[->, draw={rgb,255: red,255; green,5; blue,80}, thick]
\tikzstyle{re-da}=[-, draw={rgb,255: red,255; green,5; blue,80}, dashed, thick]
\tikzstyle{bl}=[-, draw={rgb,255: red,14; green,115; blue,177}, thick]
\tikzstyle{bl-di}=[draw={rgb,255: red,14; green,115; blue,177}, ->, thick]
\tikzstyle{bl-da}=[-, draw={rgb,255: red,14; green,115; blue,177}, dashed, thick]
\tikzstyle{lgr}=[-, draw={rgb,255: red,188; green,220; blue,156}, thick]
\tikzstyle{lgr-da}=[-, draw={rgb,255: red,188; green,220; blue,156}, thick, dashed]
\tikzstyle{lbl}=[-, draw={rgb,255: red,168; green,207; blue,245}, thick]
\tikzstyle{lbl-da}=[-, draw={rgb,255: red,168; green,207; blue,245}, thick, dashed]
\tikzstyle{pi}=[-, draw={rgb,255: red,208; green,160; blue,160}, thick]
\tikzstyle{pi-da}=[-, draw={rgb,255: red,208; green,160; blue,160}, thick, dashed]
\tikzstyle{sh}=[-, draw={rgb,255: red,171; green,171; blue,171}]
\tikzstyle{sh-da}=[-, draw={rgb,255: red,171; green,171; blue,171}, dashed]
\tikzstyle{fi-gr}=[-, fill={rgb,255: red,194; green,228; blue,162}, draw={rgb,255: red,194; green,228; blue,162}]
\tikzstyle{fi-bl}=[-, fill={rgb,255: red,175; green,215; blue,255}, draw={rgb,255: red,175; green,215; blue,255}]
\tikzstyle{fi-pi}=[-, fill={rgb,255: red,246; green,188; blue,188}, draw={rgb,255: red,246; green,188; blue,188}]
\tikzstyle{fi-pu}=[-, fill={rgb,255: red,230; green,203; blue,246}, draw={rgb,255: red,230; green,203; blue,246}]
\tikzstyle{fi-ye}=[-, fill={rgb,255: red,255; green,255; blue,155}, draw={rgb,255: red,255; green,255; blue,155}]
\tikzstyle{fi-or}=[-, fill={rgb,255: red,255; green,197; blue,51}, draw={rgb,255: red,255; green,197; blue,51}]
\tikzstyle{fi-dg}=[-, fill={rgb,255: red,152; green,170; blue,139}, draw={rgb,255: red,152; green,170; blue,139}]
\tikzstyle{fi-db}=[-, fill={rgb,255: red,111; green,147; blue,183}, draw={rgb,255: red,111; green,147; blue,183}]
\tikzstyle{fi-sh}=[-, fill={rgb,255: red,222; green,222; blue,222}, draw={rgb,255: red,222; green,222; blue,222}]
\tikzstyle{fi-dsh}=[-, fill={rgb,255: red,204; green,204; blue,204}, draw={rgb,255: red,204; green,204; blue,204}]
\tikzstyle{vt}=[-, very thick]
\tikzstyle{vt-di}=[->, very thick]
\tikzstyle{vt-da}=[-, dashed, very thick]
\tikzstyle{vt-gr}=[-, draw={rgb,255: red,0; green,186; blue,143}, very thick]
\tikzstyle{vt-gr-di}=[draw={rgb,255: red,0; green,186; blue,143}, ->, very thick]
\tikzstyle{vt-gr-da}=[-, dashed, draw={rgb,255: red,0; green,186; blue,143}, very thick]
\tikzstyle{vt-pu}=[-, draw={rgb,255: red,124; green,95; blue,239}, very thick]
\tikzstyle{vt-pu-da}=[-, dashed, draw={rgb,255: red,124; green,95; blue,239}, very thick]
\tikzstyle{vt-re}=[-, draw={rgb,255: red,255; green,5; blue,80}, very thick]
\tikzstyle{vt-re-di}=[->, draw={rgb,255: red,255; green,5; blue,80}, very thick]
\tikzstyle{vt-re-da}=[-, draw={rgb,255: red,255; green,5; blue,80}, dashed, very thick]
\tikzstyle{vt-bl}=[-, draw={rgb,255: red,14; green,115; blue,177}, very thick]
\tikzstyle{vt-bl-di}=[draw={rgb,255: red,14; green,115; blue,177}, ->, very thick]
\tikzstyle{vt-bl-da}=[-, draw={rgb,255: red,14; green,115; blue,177}, dashed, very thick]
\tikzstyle{vt-bl-da-di}=[draw={rgb,255: red,14; green,115; blue,177}, ->, very thick, dashed]
\tikzstyle{vt-lgr}=[-, draw={rgb,255: red,188; green,220; blue,156}, very thick]
\tikzstyle{vt-lgr-di}=[draw={rgb,255: red,188; green,220; blue,156}, ->, very thick]
\tikzstyle{vt-lgr-da}=[-, draw={rgb,255: red,188; green,220; blue,156}, very thick, dashed]
\tikzstyle{vt-lbl}=[-, draw={rgb,255: red,168; green,207; blue,245}, very thick]
\tikzstyle{vt-lbl-di}=[->, draw={rgb,255: red,168; green,207; blue,245}, very thick]
\tikzstyle{vt-lbl-da}=[-, draw={rgb,255: red,168; green,207; blue,245}, very thick, dashed]
\tikzstyle{vt-lpp}=[-, draw={rgb,255: red,223; green,199; blue,240}, very thick]
\tikzstyle{vt-pi}=[-, draw={rgb,255: red,236; green,151; blue,151}, very thick]
\tikzstyle{vt-pi-da}=[-, draw={rgb,255: red,236; green,151; blue,151}, very thick, dashed]
\tikzstyle{vt-ye}=[-, draw={rgb,255: red,241; green,241; blue,115}, very thick]
\tikzstyle{vt-or}=[-, draw={rgb,255: red,245; green,150; blue,32}, very thick]
\tikzstyle{vt-sh}=[-, fill=none, draw={rgb,255: red,171; green,171; blue,171}, very thick]
\tikzstyle{vt-sh-di}=[draw={rgb,255: red,171; green,171; blue,171}, very thick, ->]
\tikzstyle{fi-bx}=[-, fill={rgb,255: red,172; green,229; blue,232}, draw={rgb,255: red,14; green,115; blue,177}, thick]
\tikzstyle{ut-gr}=[-, draw={rgb,255: red,0; green,186; blue,143}, line width=3pt]
\tikzstyle{ut-gr-di}=[draw={rgb,255: red,0; green,186; blue,143}, ->, line width=3pt]
\tikzstyle{ut-ig}=[-, draw={rgb,255: red,255; green,5; blue,80}, line width=3pt]
\tikzstyle{ut-ig-di}=[draw={rgb,255: red,255; green,5; blue,80}, line width=3pt, ->]
\tikzstyle{fi-lbl}=[-, fill={rgb,255: red,213; green,237; blue,255}, draw={rgb,255: red,213; green,237; blue,255}]
\tikzstyle{fi-vlbl}=[-, fill={rgb,255: red,229; green,251; blue,255}, draw={rgb,255: red,229; green,251; blue,255}]
\tikzstyle{op-sh}=[-, draw={rgb,255: red,113; green,113; blue,113}, fill={rgb,255: red,113; green,113; blue,113}, opacity=0.5]
             \newcommand\Cite[2] {\cite[#1]{#2}}
\begin{document}
%%%%%%%%%%%%%%%%%%%%%%%%%%%%%%%%%%%%%%%%%%%%%%%%%%%%%%%%%%%%%%%%%%%%%%%%

\makeatletter
\newcommand{\pgets}{}
\DeclareRobustCommand{\pgets}{\mathrel{\mathpalette\p@to@gets\gets}}
\newcommand{\p@to@gets}[2]{%
  \ooalign{\hidewidth$\m@th#1\mapstochar\mkern5mu$\hidewidth\cr$\m@th#1\to$\cr}%
}
\makeatother

\makeatletter
\newcommand{\Pgets}{}
\DeclareRobustCommand{\Pgets}{\mathrel{\mathpalette\P@to@gets\gets}}
\newcommand{\P@to@gets}[2]{%
\ooalign{\hidewidth$\m@th#1\mapstochar\mkern3mu$\hidewidth\cr$\m@th#1\longrightarrow$\cr}%
}
\makeatother

\usetikzlibrary{decorations,arrows}
\usetikzlibrary{decorations.markings}
\usetikzlibrary{decorations.pathmorphing}
\usetikzlibrary{matrix,arrows}

\providecommand{\corollaryname}{Corollary}
\providecommand{\definitionname}{Definition}
\providecommand{\examplename}{Example}
\providecommand{\lemmaname}{Lemma}
\providecommand{\propositionname}{Proposition}
\providecommand{\remarkname}{Remark}
\providecommand{\theoremname}{Theorem}

\newcommand\Subsection[1] {\addtocounter{thm}{1}
	\def\thesubsection{\arabic{section}.\arabic{thm}}\subsection{#1}}

%%%%%%%%%%%%%%%%%%%%%%%%%%%%%%%%%%%%%%%%%%%%%%%%%%%%%%%%%%%%%%%%%%%%%%%%

\global\long\def\-{\text{-}}%
\global\long\def\lact{\triangleright}%
\global\long\def\ract{\triangleleft}%
\global\long\def\can{\mathrm{can}}%
\global\long\def\cat{\mathrm{Cat}}%
\global\long\def\ccat{\mathcal Cat}%
\global\long\def\ca{\mathcal{A}}%
\global\long\def\cb{\mathcal{B}}%
\global\long\def\cbc{\mathcal{BC}}%
\global\long\def\cc{\mathcal{C}}%
\global\long\def\calb{\GCal_{\cb}}%
\global\long\def\pcalb{\GCal_{\cb'}}%
\global\long\def\conncalb{\GCal_{\cb}^{\mathrm{conn}}}%
\global\long\def\pconncalb{\GCal_{\cb'}^{\mathrm{conn}}}%
\global\long\def\hcalb{\mathsf{\widehat{\GCal}}_{\cb}}%
\global\long\def\phcalb{\mathsf{\widehat{\GCal}}_{\cb'}}%
\global\long\def\connhcalb{\mathsf{\widehat{\GCal}}_{\cb}^{\mathrm{conn}}}%
\global\long\def\pconnhcalb{\mathsf{\widehat{\GCal}}_{\cb'}^{\mathrm{conn}}}%
\global\long\def\cyl{\mathrm{Cyl}}%
\global\long\def\ocyl{\mathrm{Cyl}^{\circ}}%
\global\long\def\dblcc{\cc^{\mathrm{rev}}\boxtimes\cc}%
\global\long\def\dbl{\mathcal{D}\mathrm{bl}}%
\global\long\def\cd{\mathcal{D}}%
\global\long\def\ce{\mathcal{E}}%
\global\long\def\cg{\mathcal{G}}%
\global\long\def\ch{\mathcal{H}}%
\global\long\def\ci{\mathcal{I}}%
\global\long\def\cm{\mathcal{M}}%
\global\long\def\cn{\mathcal{N}}%
\global\long\def\cu{\mathcal{U}}%
\global\long\def\cv{\mathcal{V}}%
\global\long\def\cw{\mathcal{W}}%
\global\long\def\cz{\mathcal{Z}}%
\global\long\def\czc{\mathcal{Z}(\mathcal{C})}%
\global\long\def\cfrc{\mathcal{F}r(\mathcal{C})}%
\global\long\def\corc{\mathrm{Cor}_{\cc}}%
\global\long\def\opcorc{\mathsf{Cor}_{\cc}}%
\global\long\def\ucorc{\mathrm{UCor}_{\cc}}%
\global\long\def\crlla{\mathsf{Crll}_{\cb}}%
\global\long\def\pcrlla{\mathsf{Crll}_{\cb'}}%
\global\long\def\conncrlla{\mathsf{Crll}_{\cb}^{\mathrm{conn}}}%
\global\long\def\pconncrlla{\mathsf{Crll}_{\cb'}^{\mathrm{conn}}}%
\global\long\def\dcftc{\mathbb{C}\mathrm{ft}_{\cc}}%
\global\long\def\da{\mathbb{A}}%
\global\long\def\db{\mathbb{B}}%
\global\long\def\dc{\mathbb{C}}%
\global\long\def\dd{\mathbb{D}}%
\global\long\def\de{\mathbb{E}}%
\global\long\def\df{\mathbb{F}}%
\global\long\def\bbord{\mathcal{B}\mathrm{ord}_{2,\mathrm{o/c}}^{\mathrm{or}}}%
\global\long\def\bbordc{\mathcal{B}\mathrm{ord}_{2}^{\mathrm{or}}}%
\global\long\def\dbord{\mathbb{B}\mathrm{ord}_{2,\mathrm{o/c}}^{\mathrm{or}}}%
\global\long\def\bprof{\mathcal{P}\mathrm{rof}_{\mathbbm{k}}}%
\global\long\def\dprof{\mathbb{P}\mathrm{rof}_{\mathbbm{k}}}%
\global\long\def\bl{\mathrm{Bl}}%
\global\long\def\blc{\mathrm{Bl}_{\cc}}%
\global\long\def\dblc{\mathbb{B}\mathrm{l}_{\cc}}%
\global\long\def\dim{\mathrm{dim}}%
\global\long\def\ev{\mathrm{ev}}%
\global\long\def\coev{\mathrm{coev}}%
\global\long\def\fk{\mathbbm{k}}%
\global\long\def\grph{\varGamma}%
\global\long\def\ogrph{\mathring{\varGamma}}%
\global\long\def\gr{\mathcal{G}\mathrm{raphs}}%
\global\long\def\grb{\gr_{\cb}}%
\global\long\def\eend{\mathrm{End}}%
\global\long\def\hom{\mathrm{Hom}}%
\global\long\def\ihom{\underline{\hom}}%
\global\long\def\id{\mathrm{id}}%
\global\long\def\iso{\stackrel{\cong}{\longrightarrow}}%
\global\long\def\eqv{\stackrel{\simeq}{\longrightarrow}}%
\global\long\def\kar{\mathrm{Kar}}%
\global\long\def\lim{\mathrm{lim}\,}%
\global\long\def\colim{\mathrm{colim}\,}%
\global\long\def\mcg{\mathrm{Map}}%
\global\long\def\hmcg{\widehat{\mathrm{Map}}}%
\global\long\def\bimod{\text{-}\mathrm{mod}\text{-}}%
\global\long\def\bimodc{\text{-}\mathrm{mod}^{\mathcal{C}}\text{-}}%
\global\long\def\rmod{\mathrm{mod}\text{-}}%
\global\long\def\lmodc{\text{-}\mathrm{mod}^{\mathcal{C}}}%
\global\long\def\rmodc{\mathrm{mod}^{\mathcal{C}}\text{-}}%
\global\long\def\Mod{\text{-}\mathcal{M}\mathrm{od}}%
\global\long\def\biMod{\text{-}\mathcal{M}\mathrm{od}\text{-}}%
\global\long\def\clMod{\mathcal{C}\text{-}\mathcal{M}\mathrm{od}}%
\global\long\def\clModtr{\mathcal{C}\text{-}\mathcal{M}\mathrm{od}^{\mathrm{tr}}}%
\global\long\def\mver{\underline{\mu}_{\mathrm{ver}}}%
\global\long\def\lmhor{\underline{\mu}_{\mathrm{hor}}^{\mathrm{l}}}%
\global\long\def\rmhor{\underline{\mu}_{\mathrm{hor}}^{\mathrm{r}}}%
\global\long\def\msa{\mathsf{a}}%
\global\long\def\msb{\mathsf{b}}%
\global\long\def\msc{\mathsf{c}}%
\global\long\def\msA{\mathsf{A}}%
\global\long\def\msB{\mathsf{B}}%
\global\long\def\msC{\mathsf{C}}%
\global\long\def\omsA{\mathsf{A}^{\circ}}%
\global\long\def\omsB{\mathsf{B}^{\circ}}%
\global\long\def\omsC{\mathsf{C}^{\circ}}%
\global\long\def\nat{\mathrm{Nat}}%
\global\long\def\inat{\underline{\mathrm{Nat}}}%
\global\long\def\op{\mathbin{\mathrm{op}}}%
\global\long\def\co{\mathbin{\mathrm{co}}}%
\global\long\def\coop{\mathbin{\mathrm{co\,op}}}%
\global\long\def\ob{\mathrm{obj}\,}%
\global\long\def\mor{\mathrm{mor}\,}%
\global\long\def\nmor{\-\mathrm{mor}\,}%
\global\long\def\ot{\mathbin{\otimes}}%
\global\long\def\ota{\mathbin{\otimes_{A}}}%
\global\long\def\otb{\mathbin{\otimes_{B}}}%
\global\long\def\otk{\mathbin{\otimes_{\fk}}}%
\global\long\def\pcirc{\diamond}%
\global\long\def\pto{\pgets}%
\global\long\def\Pto{\Pgets}%
\global\long\def\rin{\mathrm{in}}%
\global\long\def\rout{\mathrm{out}}%
\global\long\def\rl{\mathrm{l}}%
\global\long\def\rr{\mathrm{r}}%
\global\long\def\set{\mathrm{Set}}%
\global\long\def\surf{\varSigma}%
\global\long\def\snb{\mathrm{SN}_{\cb}}%
\global\long\def\csnb{\mathcal{S}\mathrm{N}_{\cb}}%
\global\long\def\csnbc{\mathcal{S}\mathrm{N}_{\cbc}}%
\global\long\def\csnbp{\mathcal{S}\mathrm{N}_{\cb'}}%
\global\long\def\dsnb{\mathbb{S}\mathrm{N}_{\cb}}%
\global\long\def\osnb{\mathrm{SN}_{\cb}^{\circ}}%
\global\long\def\cosnb{\mathcal{S}\mathrm{N}_{\cb}^{\circ}}%
\global\long\def\posnb{\mathrm{SN}_{\cb'}^{\circ}}%
\global\long\def\dosnb{\mathbb{S}\mathrm{N}_{\cb}^{\circ}}%
\global\long\def\osnfrc{\mathrm{SN}_{\cfrc}^{\circ}}%
\global\long\def\cosnfrc{\mathcal{S}\mathrm{N}_{\cfrc}^{\circ}}%
\global\long\def\dosnfrc{\mathbb{S}\mathrm{N}_{\cfrc}^{\circ}}%
\global\long\def\snc{\mathrm{SN}_{\cc}}%
\global\long\def\csnc{\mathcal{S}\mathrm{N}_{\cc}}%
\global\long\def\dsnc{\mathbb{S}\mathrm{N}_{\cc}}%
\global\long\def\osnc{\mathrm{SN}_{\cc}^{\circ}}%
\global\long\def\pop{\surf_{\mathrm{p.o.p.}}}%
\global\long\def\tr{\mathrm{tr}}%
\global\long\def\tu{\mathbbm{1}}%
\global\long\def\vct{\mathrm{Vect}_{\fk}}%
\global\long\def\vctfd{\mathrm{Vect}_{\fk}^{\mathrm{fd}}}%
\global\long\def\opsnc{\mathsf{SN}_{\cc}}%
\global\long\def\opws{\mathsf{WS}_{\cc}}%
\global\long\def\ophzc{\mathsf{Hom}_{\czc}}%
\global\long\def\oph{\mathsf{Hom}}%
\global\long\def\ws{\mathcal{S}}%
\global\long\def\cws{\widetilde{\mathcal{S}}}%
\global\long\def\uws{\breve{\mathcal{S}}}%
\global\long\def\wsf{\mathfrak{S}}%
\global\long\def\sws{\surf_{\ws}}%
\global\long\def\tw{\mathrm{Tw}}%
\global\long\def\ver{\mathrm{ver}}%
\global\long\def\hor{\mathrm{hor}}%
\global\long\def\vla{\xrightarrow{\hspace*{1.2cm}}}%

%%%%%%%%%%%%%%%%%%%%%%%%%%%%%%%%%%%%%%%

\def\be            {\begin{equation}}
\def\bearl         {\begin{array}{l}}
\def\boti          {\,{\boxtimes}\,}
\def\cdo           {\,{\cdot}\,}
\def\cir           {\,{\circ}\,}
\def\clc           {{\mathrm c}}
\def\Colon         {:\quad}
\def\Coloneqq      {\,{\coloneqq}\,}
\def\complex       {{\mathbbm C}} 
\def\Dots          {\,{\dots}\,}
\def\Dotsb         {\,{\dotsb}\,}
\def\ee            {\end{equation}}
\def\eear          {\end{array}}
\def\eq            {\,{=}\,}
\def\Enumerate     {\def\leftmargini{1.84em}~\\[-1.42em]\begin{enumerate}}
\newcommand\equ[1] {\stackrel{\eqref{#1}}=}
\def\Gama          {\grph} % \varGamma
\def\GAMA          {[\Gama]}
\def\GCal          {\mathsf{GCal}}
\def\hoc           {\hspace*{0.1pt}}     % abbreviated horizontal composition
\def\hoco          {\star}  % horizontal composition
\def\hocO          {\,{\hoco}\,}
\def\iN            {\,{\in}\,}    
\def\Itemize       {\def\leftmargini{2.23em}~\\[-1.35em]\begin{itemize}\addtolength\itemsep{-6pt}}
\def\Itemizej      {\def\leftmargini{1.33em}~\\[-1.35em]\begin{itemize}\addtolength\itemsep{-6pt}}
\def\Otb           {\,{\otb}\,}
\def\oti           {\,{\otimes}\,}
\def\Otk           {\,{\otk}\,}
\def\ptO           {\,{\pto}\,}
\def\PtO           {\,{\Pto}\,}
\newcommand\rarr[1]{\xrightarrow{~#1~}}
\newcommand\Rarr[1]{\,{\xrightarrow{\,#1\,}}\,}
\def\Rightarro     {\,{\xRightarrow{\,~\,}}\,}
\def\sss           {\scriptscriptstyle }
\def\Times         {\,{\times}\,}
\def\To            {\,{\to}\,}
\def\Vee           {{}^{\vee\!}}
\newcommand\void[1] {}

%%%%%%%%%%%%%%%%%%%%%%%%%%%%%%%%%%%%%%%%%%%%%%%%%%%%%%%%%%%%%%%%%%%%%%%%

\begin{flushright}  
   {\sf ZMP-HH/23-1}\\
   {\sf Hamburger$\;$Beitr\"age$\;$zur$\;$Mathematik$\;$Nr.$\;$937}\\[2mm]
   ~ % February 2023
\end{flushright}

\vskip 2.0em

\begin{center}
{\bf \Large String-net models for pivotal bicategories}

\vskip 15mm

{\large \  \ J\"urgen Fuchs\,$^{\,a},~\quad$ Christoph Schweigert\,$^{\,b}~\quad$
and $\quad$ Yang Yang\,$^{\,c,d}$ }

\vskip 12mm

 \it$^a$
 Teoretisk fysik, \ Karlstads Universitet\\
 Universitetsgatan 21, \ S\,--\,651\,88\, Karlstad
 \\[9pt]
 \it$^b$
 Fachbereich Mathematik, \ Universit\"at Hamburg\\
 Bereich Algebra und Zahlentheorie\\
 Bundesstra\ss e 55, \ D\,--\,20\,146\, Hamburg
 \\[9pt]
 \it$^c$
Mathematisches Forschungsinstitut Oberwolfach\\
Schwarzwaldstra\ss e 9-11, \ D\,--\,77\,709\, Oberwolfach-Walke
 \\[9pt]
 \it$^d$
Erwin Schr\"odinger International Institute for Mathematics and Physics \\
Boltzmanngasse 9, \ A\,--\,1090\, Wien

\end{center}

\vskip 4.2em

\noindent{\sc Abstract}\\[3pt]
We develop a string-net construction of a modular functor whose algebraic input
is a pivotal bicategory; this extends the standard construction based on a spherical
fusion category. An essential ingredient in our construction is a graphical calculus 
for pivotal bicategories, which
we express in terms of a category of colored corollas. The globalization of this
calculus to oriented surfaces yields the bicategorical string-net spaces as colimits.
We show that every rigid separable Frobenius functor between strictly pivotal
bicategories induces linear maps between the corresponding bicategorical string-net
spaces that are compatible with the mapping class group actions and with sewing.
Our results are inspired by and have applications to the description of
correlators in two-dimensional conformal field theories.

%%%%%%%%%%%%%%%%%%%%%%%%%%%%%%%%%%%%%%%%%%%%%%%%%%%%%%%%%%%%%%%%%%%%%%%%
\newpage
\tableofcontents{}
\newpage
%%%%%%%%%%%%%%%%%%%%%%%%%%%%%%%%%%%%%%%%%%%%%%%%%%%%%%%%%%%%%%%%%%%%%%%%

\section{Introduction}

For several decades, skein theoretic constructions have played a prominent role 
in quantum topology, see e.g.\ \cite{bhmv,walk4} for early work. Such constructions
are appealing for several reasons: They exist, for suitable algebraic input data,
in various dimensions, and they provide a rather direct relation between these
algebraic data and geometry, whereby they afford in particular geometric actions 
of mapping class groups. Moreover, skein theoretic constructions can typically be
extended to higher codimensions, so that they can provide extended topological 
field theories. For instance, they allow one to construct interesting categories
by evaluating the algebraic input data on manifolds of codimension 2, see e.g.\
\cite{hoek} for categories associated to one-manifolds (cylinder categories)
and e.g.\ \cite{bebJ,kiTha} for categories associated to two-manifolds (elliptic
Drinfeld centers etc.). Finally, there are close links to factorization homology
\cite{cooke2}.
More recently, in the form of string-net models, skein theoretic constructions 
have turned out to provide a particularly direct description of modular functors
which form the basis for the construction of correlators of two-dimensional
conformal field theories \cite{scYa,traub,fusY}.

In the present paper we study string-net constructions that associate vector spaces
to oriented surfaces and categories to oriented one-manifolds. Traditionally,
following \cite{leWe}, in such constructions a spherical fusion category $\cc$ is 
taken as the algebraic input -- one considers finite embedded graphs whose edges and 
vertices are labeled by objects and morphisms of $\cc$, respectively.
Here we take instead a more general algebraic datum as input: a pivotal bicategory
$\cb$. This is a three-layered structure and, as a consequence, we now have labels
not only for edges and vertices of an embedded graph, but also for the connected 
components of the complement of the graph. Given the categorical and geometric 
dimensions involved, the idea to use a pivotal bicategory is indeed most natural,
involved, and it dates back at least to \Cite{App.\,C.2}{moWa}. What we achieve in
the present paper is a precise construction based on pivotal bicategories in the 
sense of e.g.\ \cite{fgjs}. In the assumptions about our input bicategories we are
parsimonious, the essential requirement being that they are linear over a field $\fk$;
in particular, for most of our considerations we do not need to impose semisimplicity.

\medskip

The basic idea of string nets is to globalize a graphical calculus that exists for
some standard canvas to general manifolds with the same tangential structure as the
standard canvas. Accordingly, we start in Section \ref{chap:GCalc} by developing a
graphical calculus for pivotal bicategories, with the oriented disk as a standard canvas.
First, in Section \ref{sec:sdbicat}, we summarize the ordinary string diagram calculus
for general bicategories, for which the canvas is the standard square in the complex
plane (regarded as 2-framed, with vector fields parallel to the two coordinate axes)
and the coupons for 2-morphisms are rectangles. If the bicategory $\cb$ is endowed
with the additional structure of a (strictly) pivotal bicategory, a different
graphical calculus can be derived from the ordinary string diagram calculus. This 
calculus has the standard oriented disk as its canvas and treats in- and outputs 
of a 2-morphism on the same footing. Thereby the coupons can be chosen to be circles;
as we explain in Section \ref{sec:pccp}, the space of colors associated with such 
a circular coupon can be defined as a limit over a contractible groupoid.

To be suited for string-net models, we need a concise formalization of this graphical
calculus. This is provided in Sections \ref{sec:pccp} and \ref{sec:ufgcpb}
in terms of a symmetric monoidal functor $\calb$ from a category $\crlla$ of partially
$\cb$-colored corollas to vector spaces, as described in Summary \ref{sum:calb}.
In Section \ref{sec:prstCrll} we prove structure theorems that are instrumental
later on: we show that (non-trivial) morphisms in the category $\crlla$ can be
decomposed into a finite disjoint union of partial compositions of morphisms of a 
few specific types (Proposition \ref{prop:CoroDecom}) and establish a similar 
presentation of the subcategory $\conncrlla$ of $\crlla$ that only contains those 
morphisms of $\crlla$ all of whose underlying graphs are connected in the disks 
they are embedded in (Corollary \ref{cor:ConnCoroDecom}).
 
These presentations of $\crlla$ and $\conncrlla$ allow us to investigate the 
functoriality of the graphical calculus encoded in the functor $\calb$ in its 
dependence on the bicategory $\cb$; this is developed in Section \ref{sec:RSF}. It is
worth stressing that considering pseudofunctors between bicategories $\cb$ and 
$\cb'$ does not provide sufficient insight; instead, we examine Frobenius monoidal 
functors, adapted to bicategories. Such functors first arose in the context of 
linearly distributive categories. These are monoidal categories with two tensor
products, which makes it natural to consider functors with monoidal constraints 
that are lax for one of the tensor products and oplax for the other. The lax and 
oplax structures are not required to be inverse to each other (which in the linearly
distributive context would not make sense). Instead, they have to satisfy a 
compatibility condition which implies in particular that the image of the terminal 
bicategory with one object, one 1-morphism and one 2-morphism under a Frobenius functor
gives a 1-endomorphism with the structure of a Frobenius algebra.
In the present paper, we are interested in general bicategories, rather than in the
special case of monoidal categories; the notion of a Frobenius functor can be easily
extended to these. We thus introduce the notion of conjugation of a
2-morphism by a functor equipped with lax and oplax structures
(Definition \ref{def:Fconj}). We show that conjugation by a
rigid Frobenius functor between two strictly pivotal bicategories $\cb$ and $\cb'$
canonically induces a monoidal natural transformation between the graphical calculi
for $\cb$ and $\cb'$ when restricted to $\conncrlla$ and $\pconncrlla$, respectively
(Theorem \ref{thm:RSFconj}), and that this canonically extends to a
monoidal natural transformation between the full graphical calculi if
$F$ is even a rigid pseudofunctor (Corollary \ref{cor:RPconj}).

The two types of bicategories we are particularly interested in are both based on a
pivotal category $\cc$: its delooping $\cbc$, and the bicategory $\cfrc$ which has
simple special symmetric Frobenius algebras internal to $\cc$ as objects, bimodules
over a pair of such algebras as 1-morphisms, and bimodule morphisms as 2-morphisms.
The obvious forgetful functor $\cu$ from $\cfrc$ to $\cbc$ is canonically a rigid 
separable Frobenius functor. In applications a crucial fact, explained in Example 
\ref{exa:UFrCtoBC}, is that the lax and oplax structures of $\cu$ provide an 
idempotent which exhibits the tensor product over a
special symmetric Frobenius algebra as a retract.

\medskip

In Section \ref{sec:SNbicat} we globalize the graphical calculus and construct
modular functors through a string-net construction that uses a strictly pivotal 
bicategory $\cb$ as an input. In Section \ref{sec:BSNspaces} we define the (bare)
string-net space $\osnb(\surf,\msb)$ assigned to a surface $\surf$ and a 
$\cb$-boundary datum $\msb$ on $\surf$. Similarly as in the conventional string-net 
construction, this is based on the idea to utilize the graphical calculus on disks
to impose local relations on a vector space freely generated by all $\cb$-colored
graphs on $\surf$ whose boundary datum is given by $\msb$. As usual, the space 
$\osnb(\surf,\msb)$ carries a geometric action of the mapping class group of $\surf$.
In Section \ref{sec:SN-colimit} we characterize $\osnb(\surf,\msb)$ as a colimit 
over a functor $\ce_{\cb}^{\surf,\msb}$ from a category of partially $\cb$-colored 
graphs to vector spaces (Theorem \ref{thm:SN-colim}). This insight lends itself
to future generalizations of the string-net construction, e.g.\ replacing linear
categories by dg-categories and colimits by homotopy colimits.
 
In Section \ref{sec:functoriality} we show that the string-net construction is 
functorial with respect to rigid pseudofunctors. Recall that, by Corollary 
\ref{cor:RPconj}, this class of functors preserves the entire graphical calculus. 
Concretely, we show that for any surface $\surf$ and $\cb$-boundary datum $\msb$ on
$\surf$, a rigid pseudofunctor $F$ between strictly pivotal bicategories $\cb$ and
$\cb'$ gives rise to a canonical $\mcg(\surf)$-intertwiner between the string-net
spaces $\osnb(\surf,\msb)$ and $\posnb(\surf,\msb')$, where the $\cb'$-boundary datum
$\msb'$ is obtained from $\msb$ by a change of coloring induced by $F$ (Theorem 
\ref{thm:osnb-posnb}).
Having now at our disposal string-net spaces on non-trivial manifolds, we can
introduce, in Sections \ref{sec:oCylS} and \ref{sec:oCylI}, the cylinder categories 
over circles and intervals, respectively. For the latter we impose the additional 
requirement that the strictly pivotal bicategory $\cb$ must be pointed, i.e.\ be 
endowed with a distinguished object.  This allows us to find the cylinder categories 
for intervals as the endomorphism category of the distinguished object 
(Proposition \ref{prop:ocylIEnd}), and to obtain the central result of Section 
\ref{sec:func-embed}, which states that the assignment of cylinder categories 
is functorial under embeddings of 1-manifolds (Proposition \ref{prop:cyl-funct}).

As a standard feature of skein theoretic constructions, the cylinder categories
have, in general, no reason to be idempotent complete or even abelian; we examine 
their idempotent completion in Section \ref{sec:kar}. The general bicategorical
string-net construction shares characteristic features of the special case based
on a spherical fusion category. In particular, as we show in Section \ref{sec:fact}
(Theorem \ref{thm:osnfct}), it obeys factorization, or excision, which we formulate
in terms of coends. Finally, in Section \ref{sec:open-closed} we can combine all 
these results to show, in Theorem \ref{thm:thm} that the string-net construction 
based on a pointed strictly pivotal bicategory $\cb$ provides an open-closed modular
functor $\cosnb$, i.e.\ a symmetric monoidal pseudofunctor
  $$
  \cosnb\Colon \bbord\rarr~\bprof
  $$
from the symmetric monoidal bicategory of open-closed bordisms to the symmetric 
monoidal bicategory of $\fk$-linear profunctors. The same applies to the functor
$\csnb$ furnished by idempotent-completed string nets.

 \medskip

In the final Section \ref{sec:SNC} we provide an application of bicategorical string 
nets to correlators of two-dimensional conformal field theories. This application 
makes use of the fact that if the input bicategory $\cb$ is the delooping of a 
spherical fusion category $\cc$, then the open-closed modular functor $\csnb$ extends 
the standard Turaev-Viro functor. In \cite{fusY} a construction of correlators via 
$\cc$-colored string nets has been achieved. This construction can be paraphrased as 
follows: A topological world sheet $\ws$ with physical boundaries and topological 
defect lines for a rational CFT whose chiral data are encoded in a modular fusion
category $\cc$ is naturally an $\cfrc$-colored graph $\cws$ on its underlying 
surface $\surf$; via the canonical quotient map $\mathrm q(\surf,\msb) \colon 
\fk\mathsf G_{\cfrc}(\surf,\msb) \,{\twoheadrightarrow}\,\osnfrc(\surf,\msb)$ it thus
tautologically determines a vector $[\cws]$ in the string-net 
space based on the bicategory $\cfrc$. On the other hand, in \cite{fusY} we 
associated to any world sheet $\ws$ an explicit string net and a corresponding vector
  $$ 
  \corc(\ws) \,\in \snc(\sws,\df_{\partial\surf}(\msb_{\ws}))
  $$ 
and showed that this assignment gives a consistent system of correlators 
\Cite{Thm.\,3.28}{fusY}. The assignment of correlators to world sheets with fixed 
underlying surface $\surf$ and boundary datum $\msb$ is encoded in a linear map 
$\corc(\surf,\msb)\colon\fk\mathsf{G}_{\cfrc}(\surf,\msb) \Rarr~ \snc(\surf,
\df_{\partial\surf}(\msb))$. The main result of Section \ref{sec:SNC} is the proof of
Theorem \ref{thm:universal}, which states that this linear map factorizes
over the string-net spaces based on $\cfrc$: For every compact oriented surface 
$\surf$ and $\cfrc$-boundary datum $\msb$ there is a unique $\mcg(\surf)$-intertwiner 
$\ucorc(\surf,\msb)\colon \osnfrc(\surf,\msb) \,{\xrightarrow{~~}} %
        $\linebreak[0]$
\snc(\surf,\df_{\partial\surf}(\msb))$ such that
  $$
  \corc(\surf,\msb) = \ucorc(\surf,\msb)\cir \mathrm q(\surf,\msb) \,.
  $$
This establishes a new conceptual role of defect data: they constitute
the input for a string-net construction that
provides a systematic home for relations between different correlators.

This may be seen as a special case of the following general statement (Theorem 
\ref{thm:B2B'}): Every rigid separable Frobenius functor between strictly pivotal 
bicategories induces linear maps between the corresponding bicategorical string-net 
spaces that are compatible with the mapping class group actions and with sewing, 
albeit in a less straightforward way than rigid pseudofunctors do -- in addition 
to conjugating by the functor, full Frobenius graphs are added to compensate for 
the incomplete preservation of the graphical calculi.

%%%%%%%%%%%%%%%%%%%%%%%%%%%%%%%%%%%%%%%%%%%%%%%%%%%%%%%%%%%%%%%%%%%%%%%%

\section{Graphical calculus for pivotal bicategories} \label{chap:GCalc}

\Subsection{Rectangular string diagrams for bicategories} \label{sec:sdbicat}

A bicategory $\cb$ can be described as a category weakly enriched
in the symmetric monoidal 2-category $\ccat$ of small categories,
functors and natural transformations, with the Cartesian product as monoidal product.
Thus in particular for any pair of objects $a,b\iN\cb$ there is a \emph{hom-category}
$\cb(a,b)$. The only general requirements that we impose on $\cb$ is that all these 
hom-categories are themselves enriched in the category $\vct$ of (not necessarily
finite-dimensional) $\fk$-vector spaces and linear maps,
and that the endomorphisms of the identity 1-morphism $\id_a$ of any object $a\iN\cb$
are isomorphic as a $\fk$-algebra to the ground field  $\fk$.
Here $\fk$ is an algebraically closed field, which we fix once and for all.

For horizontal composition we use \emph{diagrammatic order}. We denote the
horizontal compositions generally by ``$\hoco$'', but for brevity 
in some long formulas suppress this symbol and indicate the composition instead just 
by juxtaposition. The vertical composition of 2-morphisms is denoted by ``$\circ$''. 
Composite 2-morphisms can be expressed through \emph{pasting diagrams} or,
alternatively. through \emph{string diagrams} on the standard square $I\Times I$ 
as a canvas. The two descriptions are Poincar\'e dual to each other.
For instance, given objects $a,b,c\iN\cb$, 1-morphisms $f,f',f'' \,{\in}\,
\cb(a,b)$ and $g,g'\iN\cb(b,c)$, and 2-morphisms 
$\alpha\colon f\Rightarro f'$, $\beta\colon f'\Rightarro f''$ and 
$\gamma\colon g\Rightarro g'$, the pasting diagram 
for $(\beta\cir\alpha)\hocO\gamma\colon f\hocO g\Rightarro f''\hocO g'$ is
  \be
  \begin{tikzcd}
  a && b && c
  \arrow[""{name=0,anchor=center,inner sep=0},"{f''}",curve={height=-24pt}, from=1-1,to=1-3]
  \arrow[""{name=1,anchor=center,inner sep=0},"f"',curve={height=24pt}, from=1-1,to=1-3]
  \arrow[""{name=2,anchor=center,inner sep=0},"{f'}"{description, pos=0.3}, from=1-1,to=1-3]
  \arrow[""{name=3,anchor=center,inner sep=0},"g"',curve={height=24pt}, from=1-3,to=1-5]
  \arrow[""{name=4,anchor=center,inner sep=0},"{g'}",curve={height=-24pt}, from=1-3,to=1-5]
  \arrow["\alpha"', shorten <=3pt, shorten >=3pt, Rightarrow, from=1,to=2]
  \arrow["\gamma"', shorten <=6pt, shorten >=6pt, Rightarrow, from=3,to=4]
  \arrow["\beta"', shorten <=3pt, shorten >=3pt, Rightarrow, from=2,to=0]
  \end{tikzcd}
  \label{eq:pasting}
  \ee
We portray the vertices of a string diagram as rectangular coupons and shade the
two-dimensional regions with different colors that indicate the different objects. 
The string diagram corresponding to the pasting diagram \eqref{eq:pasting} is then 
  \be
  \tikzfig{SDB0}
  \ee
Or, to give a somewhat more complicated example, the diagrams
  \be
  \begin{tikzcd}[column sep=1.5em]
  && {b''}
  \\
  a && {b'} && c & d 
  \\ 
  && b \arrow["{f'}"', from=2-1, to=2-3]
  \arrow[""{name=0, anchor=center, inner sep=0}, "{f''}", out=60, in=200, from=2-1, to=1-3]
  \arrow["{g'}"', from=2-3, to=1-3] \arrow["g", from=2-3, to=3-3]
  \arrow[""{name=1, anchor=center, inner sep=0}, "f"', out=-60, in=160, from=2-1, to=3-3]
  \arrow[""{name=2, anchor=center, inner sep=0}, "h"', out=20, in=240, from=3-3, to=2-5]
  \arrow[""{name=3, anchor=center, inner sep=0}, "{h'}"', out=120, in=-20, from=2-5, to=1-3]
  \arrow["i"', from=2-5, to=2-6]
  \arrow["\alpha"', shorten <=5pt, shorten >=2pt, Rightarrow, from=1, to=2-3]
  \arrow["\beta"', shorten <=2pt, shorten >=5pt, Rightarrow, from=2-3, to=0]
  \arrow["\gamma", shorten <=5pt, shorten >=4pt, Rightarrow, from=2, to=3]
  \end{tikzcd}
  \qquad \text{and} \qquad\quad
\tikzfig{SDB1}
  \ee
express the same 2-morphism in $\cb$.

Actually, to make sense of either type of diagram, one first needs to select for 
each layer of horizontal composite of 1-morphisms a bracketing, which includes 
a choice of insertions of identity 1-morphisms. However, thanks to the 
\emph{coherence theorem} for bicategories (see e.g.\ \cite[Sect.\,3.6]{JOYa}), 
between each pair of bracketed horizontal composites of the same composable sequence 
of 1-morphisms there is a unique 2-isomorphism made up of associators
and unitors that connects the pair. As a consequence, 
for any choice of bracketings for the \emph{source} and the \emph{target}
of a (string or pasting) diagram, there is a unique 2-morphism assigned
to the diagram, and the 2-morphisms for any two such choices are connected by a 
unique isomorphism of 2-hom spaces. Thus a pasting or string diagram uniquely 
determines a \emph{contractible groupoid} in $\vct$ whose vertices are the 
2-hom spaces corresponding to the possible bracketings, as well as a coherent 
choice of elements in each of the 2-hom spaces,
We refer to this coherent choice as the \emph{value} of the diagram. 

Put differently, every equality of string (or pasting) diagrams
stands for an infinite family of equalities in
different 2-hom spaces, one for each simultaneous choice of bracketings for both
sides of the equality. Alternatively, one can invoke the \emph{strictification
theorem} for bicategories (see e.g.\ \cite[Ch.\,2]{GUrs}), according to
which every bicategory is canonically biequivalent to a canonical strict 
2-category associated with it, and treat any bicategory as if it were strict. We 
will freely use both of these perspectives.

The value of a string diagram
is unaffected by any isotopy of the diagram that fixes the orientation
of the rectangular coupons while keeping the diagram \emph{progressive}
and the end points of its legs fixed. To allow also for non-progressive
string diagrams, one needs appropriate dualities.
A \emph{dual pair}, or \emph{adjoint pair}, in a bicategory $\cb$ is a quadruple 
$(f,g,\eta,\varepsilon)$ consisting of 1-morphisms $f\iN\cb(a,b)$ and $g\iN\cb(b,a)$
and 2-morphisms $\eta\colon\id_{a}\Rightarro f\hocO g$ and 
$\varepsilon\colon g\hocO f\Rightarro\id_{b}$, called the unit and counit of the
dual pair, that satisfy two \emph{yanking equalities}. When the unit and counit
are represented by the string diagrams
  \be
  \scalebox{0.8}{\tikzfig{SDB2}} \qquad\text{and}\qquad
  \scalebox{0.8}{\tikzfig{SDB3}} 
  \ee
the yanking equalities read
(after making the identity 1-morphisms invisible)
  \be
  \begin{aligned}
  & \scalebox{0.7}{\tikzfig{SDB4}} & \,=~ \scalebox{0.7}{\tikzfig{SDB5}}
  \\
  \text{and}\qquad 
  & \scalebox{0.7}{\tikzfig{SDB6}} & \,=~ \scalebox{0.7}{\tikzfig{SDB7}}
  \end{aligned}
  \ee
Duals are unique up to unique isomorphisms.
We call $f$ the \emph{left dual} (or \emph{left adjoint}) of
$g$ and write $f \eq \Vee g$, while $g$ is called the \emph{right
dual} (or \emph{right adjoint}) of $f$, written as $g \eq f^{\vee}$. 

A \emph{bicategory with duals }is a bicategory $\cb$ such that every
1-morphism in $\cb$ has both a left and a right dual. Fixing a right dual
for each 1-morphism yields a pseudofunctor 
  \be
 (-)^{\vee}\Colon \cb\rarr~ \cb^{\coop}
  \ee
from $\cb$ to the bicategory $\cb^{\coop}$ that obtained by reversing
both the 1- and the 2-morphisms in $\cb$. This pseudofunctor is
the identity on objects and sends every 1-morphism to its chosen right
dual and every 2-morphism $\alpha\colon f\Rightarro g$ to its \emph{transpose}
$\alpha^{\vee}\colon g^{\vee}\Rightarro f^{\vee}$. 
A \emph{pivotal} bicategory is one for which the double dual can be trivialized
\Cite{Sect.\,5.3}{fgjs}:

\begin{defn}
 \Itemize
 \item[{\rm (i)}]
A \emph{pivotal structure} on a bicategory $\cb$ with fixed left and right duals
is an identity component pseudonatural transformation (i.e.\ every component 
1-morphism is an identity) $\id_{\cb} \,{\Rightarrow}\, (-)^{\vee\vee}$.
Equipped with a pivotal structure, $\cb$ is called a \emph{pivotal bicategory}.
 \item[{\rm (ii)}]
A \emph{strictly pivotal bicategory} is a pivotal bicategory for which
the double dual is the identity,
  \be
  \id_{\cb} = (-)^{\vee\vee} .
  \ee
\end{itemize}
\end{defn}

In a strictly pivotal bicategory $\cb$ we have $\Vee f \eq (\Vee f)^{\vee\vee}
\eq f^{\vee}$, so that we can speak of \emph{the dual} of a 1-morphism. In a 
string diagram we can then replace a string labeled by the dual of $f$ by a string 
labeled by $f$ but having opposite direction. As a consequence, for strictly 
pivotal bicategories, non-progressive diagrams make sense. Moreover, for any 2-morphism
$\alpha\colon f\Rightarro g$ in a strictly pivotal bicategory one has
 \be
 \scalebox{0.8}{\tikzfig{SDB8}} ~~=~~ \scalebox{0.8}{\tikzfig{SDB9}}.
 \label{eq:rotatefc}
 \ee
It follows that an isotopy resulting in a $2\pi$-rotation of any coupon in a 
string diagram for a strictly pivotal bicategory does not affect the value 
of the diagram. Note that the canvas of the string diagrams -- the standard 
square $I\Times I$ -- comes with a canonical \emph{$2$-framing}, i.e.\ 
a trivialization of its tangent bundle $T$ given by two non-vanishing vector fields 
parallel to the $x$- and $y$-axis of $\mathbb R^2 \eq T(I\Times I)$, respectively.
In both diagrams in \eqref{eq:rotatefc} the coupon is aligned with this 
canonical 2-framing, so that in particular its boundary segments are 
parallel to those of $I\Times I$. We summarize these observations as

\begin{prop} \label{prop:sdpivb}
Any $($not necessarily progressive$)$ string diagram on the standard square
$($with coupons not necessarily aligned with the frame$)$ for a strictly pivotal 
bicategory $\cb$ has a well defined value.
Any isotopy of the diagram that keeps the end points of its legs fixed
$($but may rotate the coupons$)$ leaves this value is unchanged.
\end{prop}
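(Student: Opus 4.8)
The plan is to reduce any such diagram to a \emph{progressive} string diagram with frame-aligned coupons, for which the value has already been defined in Section~\ref{sec:sdbicat}, and then to check that the reduction is independent of all the choices made. First I would exploit strict pivotality: since $\Vee f \eq f^{\vee}$ for every 1-morphism $f$, every string may be taken to carry a genuine 1-morphism label together with an orientation, a string labeled $f^{\vee}$ being replaced by an $f$-labeled string of opposite direction. Choosing the height function given by projection onto the $y$-axis and perturbing the diagram into generic position, I would arrange that the coupons sit at distinct generic heights and that the strings have only finitely many nondegenerate critical points (local maxima and minima, i.e.\ turnbacks). Each turnback is then read as the unit $\eta\colon\id\Rightarro f\hocO g$ or counit $\varepsilon\colon g\hocO f\Rightarro\id$ of the chosen dual pair for the 1-morphism labeling that string, and each coupon, after a rotation aligning it with the 2-framing, is read as the corresponding transpose of its 2-morphism via \eqref{eq:rotatefc}. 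The outcome is a progressive, frame-aligned diagram whose value in the sense of Section~\ref{sec:sdbicat} we \emph{declare} to be the value of the original diagram; the reading of cups and caps is unambiguous because the chosen duals are unique up to unique isomorphism.

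The substance of the proof is to show this value is independent of the generic representative, equivalently that it is invariant under isotopy. Here I would argue by a movie/Cerf-type decomposition: any two generic representatives related by an isotopy fixing the leg endpoints are connected by a finite sequence of \emph{elementary moves}, and it suffices to verify invariance one move at a time. The elementary moves are (i) isotopies through progressive frame-aligned diagrams, (ii) Morse cancellations that create or annihilate an adjacent maximum--minimum pair on a single strand, (iii) a $2\pi$-rotation of a coupon, and (iv) local slides of a coupon past a turnback. Move (i) preserves the value by the invariance of the ordinary calculus recalled in Section~\ref{sec:sdbicat}; move (ii) is precisely a yanking equality for the relevant dual pair and hence preserves the value; move (iii) was already shown to be harmless in the discussion preceding the proposition; and move (iv) reduces to naturality of the chosen dualities together with \eqref{eq:rotatefc}, which governs exactly how a coupon's label transposes as it is carried around a turnback.

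I expect the principal obstacle to be the bookkeeping underlying this decomposition, namely establishing that a generic isotopy of the square (rel endpoints) induces exactly the above finite list of local transitions and that the list is \emph{complete}. This is a standard generic-position argument, in the spirit of the coherence and strictification results cited in Section~\ref{sec:sdbicat} and of the Morse-theoretic analysis of planar tangle isotopies; the planarity of bicategorical string diagrams (no crossings) keeps the catalogue of critical events short. Care is nonetheless needed to confirm that no move outside this list can occur, and that the choices of bracketings and insertions of identity 1-morphisms, which vary along the isotopy, are tracked coherently by the contractible groupoid assigned to each intermediate diagram. Once completeness of the move list is granted, both assertions follow at once: well-definedness from invariance under (i)--(iv) applied to two reductions of the \emph{same} diagram, and isotopy invariance from the same analysis applied to the given isotopy.
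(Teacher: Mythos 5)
Your proposal is correct in outline, but it takes a genuinely different and considerably heavier route than the paper's own proof. The paper's argument is three sentences long: it takes as already established, from the discussion immediately preceding the proposition, that \emph{non-progressive} string diagrams whose coupons \emph{are} aligned with the frame have well-defined, isotopy-invariant values --- this is precisely what strict pivotality buys, since a strand labelled $f^{\vee}$ can be re-read as an oppositely directed strand labelled $f$, with turnbacks read as (co)units. Given that, the only remaining issue is coupon alignment, and the proof consists of two observations: any diagram can be isotoped rel leg endpoints to a frame-aligned one, and any two such alignments differ by an isotopy that rotates each coupon by some multiple of $2\pi$, which by \eqref{eq:rotatefc} does not change the value. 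Your proof instead re-derives the underlying invariance from scratch: you reduce everything to the progressive calculus of Section~\ref{sec:sdbicat} via a height function and then run a Cerf/movie-move decomposition of isotopies. This is the standard textbook proof of isotopy invariance for pivotal graphical calculus, and if completed it would establish strictly more than the paper's proof does (namely the assertion that the paper only states, not proves). The price is exactly the step you flag yourself: the completeness of the move catalogue is \emph{the} technical content of your approach, and you grant it rather than prove it, so as written your argument has the same logical status as the paper's --- both ultimately lean on standard facts about pivotal calculus, yours at the level of generic-position combinatorics, the paper's at the level of the already-accepted meaning of frame-aligned non-progressive diagrams. One smaller point: your move (iv) is not governed by \eqref{eq:rotatefc} alone, which is the full $2\pi$-rotation identity; sliding a coupon past a turnback involves a \emph{partial} rotation, i.e.\ re-reading one leg from source to target through a single cup or cap, and the consistency of these re-readings rests on the yanking equalities together with naturality of the chosen duals. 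That distinction is worth making explicit if you write the argument out in full.
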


\begin{proof}
Any string diagram $\Gama$ on $I\Times I$ is isotopic, through an isotopy that 
fixes the end points of its legs, to a string diagram $\widetilde{\Gama}$ whose
coupons are aligned with the frame. Moreover, any other choice $\widetilde{\Gama}'$
that yields such a diagram differs from $\widetilde{\Gama}$ by an isotopy that fixes
the end points and rotates each coupon by some multiple of $2\pi$, which because 
of the equality \eqref{eq:rotatefc} does not change the value of the diagram.
\end{proof}

There are two types -- $\cbc$ and $\cfrc$ --
of pivotal bicategories that are of particular interest to us:

\begin{example}\label{exa:bc}
Given a pivotal tensor category $\cc$, its \emph{delooping} $\cbc$, i.e.\ $\cc$ viewed
as a bicategory with a single object, is a pivotal bicategory, with the pivotal
structure of $\cc$ viewed as a pivotal structure for $\cbc$. Upon strictifying
the pivotal structure of $\cc$ (which is always possible \cite[Thm.\,2.2]{ngsc}),
the bicategory $\cbc$ becomes strictly pivotal.
\end{example}

\begin{example}\label{exa:cfrc}
For any pivotal tensor category $\cc$ there is a bicategory $\cfrc$ which has
simple special symmetric Frobenius algebras internal to $\cc$ as objects, bimodules 
over a pair of such algebras as 1-morphisms, and bimodule morphisms as 2-morphisms. 
The bicategory inherits from $\cc$ a canonical pivotal structure. If $\cc$ is 
strictly pivotal, then so is $\cfrc$.
That we require the symmetric Frobenius algebras which are objects of this bicategory
to be special is motivated by the application to conformal field theory that we will
describe in Section \ref{sec:SNC}. An essential ingredient in that application is the
idempotent that realizes (see Example \ref{exa:UFrCtoBC}) the tensor product over the
algebra.
\end{example}

%%%%%%%%%%%%%%%%%%%%%%%%%%%%%%%%%%%%%%%%%%%%%%%%%%%%%%%%%%%%%%%%%%%%%%%%

\Subsection{Partially colored corollas and polarizations\label{sec:pccp}}

The string diagrams for bicategories described in Section \ref{sec:sdbicat} use
the standard square $I\Times I$ as the canvas. Asserting that a coupon
is \emph{aligned with the frame} is thus the same as saying that the coupon is 
aligned with the canonical 2-framing of the standard square. For a pivotal category,
by Proposition \ref{prop:sdpivb}, the 2-framing in the \emph{interior} of the 
square does not affect the evaluation of any string diagram for a (strictly)
pivotal bicategory: the value is unchanged under isotopies which do
not necessarily preserve the alignment of the coupons with the 2-framing.
In contrast, so far we still use the 2-framing at the \emph{boundary} of the square:
it tells us which part of the boundary is the bottom and which part is the top, 
namely the intervals in $\partial(I{\times}I)$ at which the framing is pointing
inwards and outwards, respectively. The distinction between top and the bottom 
separates the \emph{output ports} from the \emph{input ports} of a string diagram. As
we will show in this section, in the case of a \emph{pivotal} bicategory, the 
distinction between input and output is immaterial to the graphical calculus, so that
the 2-framing of the canvas can be completely disregarded. This result constitutes
a crucial step towards the formulation of string-net models -- in a sense, 
string-net models are generalizations of the graphical calculus for which 
string diagrams can have \emph{any} compact oriented surface as their canvas.

We now fix a strictly pivotal bicategory $\cb$. To proceed, let us introduce
the notion of a \emph{partially $\cb$-colored graph} on an oriented 
surface. For the moment we restrict our attention to the case that the surface is the
\emph{standard disk}. By definition, the standard disk $D \,{\subset}\, \complex$ is
the closed unit disk of radius 1 centered at $0\iN\complex$; the boundary 
$S^{1} \,{:=}\, \partial D \eq \{ z \iN \complex \,|\, |z| \eq 1\}$
of the standard disk is called the \emph{standard circle}.
More general surfaces than $D$ will be considered later on.

\begin{defn}
A \emph{partially $\cb$-colored graph} $\grph$ on a compact oriented
surface $\surf$ with possibly non-empty boundary consists of the following data:
 \Itemize
 \item[{\rm (i)}]
An underlying \emph{directed finite} graph, i.e.\ a diagram 
  \be
  \begin{tikzcd}
  E(\grph) \arrow[r, "\delta"', bend right]
  & H(\grph) \arrow["i"', loop, distance=2em, in=125, out=55] \arrow[l, two heads]
  & I(\grph) \arrow[l, "s"', hook'] \arrow[r, "t"] & V(\grph)
  \end{tikzcd}
  \ee
of finite sets, with $V(\grph)$, $H(\grph)$ and $I(\grph)$ the sets of 
\emph{internal vertices}, of \emph{half-edges}, and of half-edges that 
\emph{touch an internal vertex}, respectively. The map $t$ indicates the incidence of 
half-edges in $I(\grph)$ to the internal vertices, while $s$ is the canonical 
inclusion. The map $i$ is the
fixed-point-free involution that indicates the \emph{juncture} of pairs of half-edges;
its set of orbits is $E(\grph)$, to be interpreted as the set of \emph{edges}. An 
edge which consists of a pair of internal half-edges is called an \emph{internal edge},
while an edge consisting of a single half-edge is called a
\emph{leg}. The map $\delta$ is a section of the canonical quotient map 
$H(\grph)\,{\twoheadrightarrow}\, E(\grph)$; for each edge it picks out its 
\emph{starting} half-edge and thereby directs the edges.
 \item[{\rm (ii)}]
An \emph{embedding} into $\surf$ of the \emph{geometric realization} $|\grph|$ of the 
underlying graph, i.e.\ of the topological space 
  \be
  \big( \bigsqcup_{v\in V(\grph)} \! \{v\} \big) \,\sqcup\,
  \big( \bigsqcup_{e\in H(\grph)} \! [0,1]_e \big) \big/\!\sim \;,
  \ee
where the equivalence relation is generated by
$[0,1]_{e} \,{\ni}\, 1 \,{\sim}\, 1\iN [0,1]_{i(e)}$ for all $e\iN H(\grph)$ and
$[0,1]_{e} \,{\ni}\, 0 \,{\sim}\, \{t(e)\}$ for all $e\iN I(\grph)$.
The embedding is subject to the requirement that the intersection of $\partial\surf$ 
with the image of $|\grph|$ is exactly the image of the \emph{end points} of the legs,
i.e.\ the image of 
$\{0\iN[0,1]_{l} \,{\hookrightarrow}\, |\grph|\}_{l\in H(\grph)\setminus I(\grph)}^{}$.
 \item[{\rm (iii)}]
A coloring of the \emph{patches}, i.e.\ of the connected components of
$\surf\,{\setminus}\,|\grph|$, with objects of $\cb$, as well as a coloring of the
directed edges with 1-morphisms of $\cb$. More specifically, a directed edge $e$ 
is labeled with a 1-morphism $f \iN B(a_\mathrm{l},a_\mathrm{r})$, where $a_\mathrm{l}$
and $a_\mathrm{r}$ are the labels of the patches adjacent to the left and to the right
of $e$, respectively.
\end{itemize}
\end{defn}

Note that the vertices of the graph $\grph$ are \emph{not} colored -- hence the name
\emph{partially} $\cb$-colored graph -- and that the empty graph $\grph \eq \emptyset$
is allowed. Also, 
the distinction between left and right in part (iii) of the definition is meant to be
such that a vector pointing to the right and a vector pointing in the direction of $e$
form a coordinate system whose orientation coincides with the orientation of $\surf$.

\begin{defn}
A \emph{partially $\cb$-colored corolla} is a partially $\cb$-colored graph 
on the standard disk $D$ whose underlying directed finite 
graph has a single vertex -- called the \emph{center} of the corolla -- which
is mapped by the embedding to $0\iN D \,{\subset}\, \complex$, and for which 
the image of each edge is a straight line connecting the center with a point on
$\partial D$. (The number of edges is allowed to be zero.)  
\end{defn}

\begin{example}\label{exa:exaK}
The following picture shows a partially $\cb$-colored corolla
$K$ on the standard disk (which we equip  with the counterclockwise orientation):
  \be
  K ~=~ \tikzfig{GCPB1}
  \label{eq:K}
  \ee
Here $a,b,c$ are objects in $\cb$ and $f\iN\cb(a,b)$, $g\iN\cb(b,c)$ and  $h\iN\cb(c,a)$
are 1-morphisms. We also indicate the objects by shading the regions with different colors,\,%
 \footnote{~In the color version: green, blue, and purple.}
In the pictures below, we will only keep the shadings and suppress the 
corresponding object labels.
\end{example}

\begin{defn} \label{def:bdydatum}
For $\cb$ a strictly pivotal bicategory,
a $\cb$-\emph{boundary datum} $\msb$ on a compact oriented 1-manifold $\ell$
with possibly non-empty boundary consists of of the following data:
 \Itemize
 \item[{\rm (i)}]
A (possibly empty) finite set $O_\ell$ of points in the interior of  $\ell$.
 \item[{\rm (ii)}]
A coloring of each connected component $a \iN \pi_0(\ell \,{\setminus}\, O_\ell)$ 
with an object $\mathsf C_\msb(a) \iN \cb$.
 \item[{\rm (iii)}]
A coloring of each element of $O_\ell$ with a 1-morphism in $\cb$.
\end{itemize}
Here the convention is that the color of $p \iN O_\ell$ is a 1-morphism in
$\cb(\mathsf C_\msb(a),\mathsf C_\msb(a'))$ if the connected components $a$ and $a'$
of $\ell \,{\setminus}\, O_\ell$ are located to the left and right of $p$,
respectively, with the orientation of $\ell$ pointing from the right to the left. 
\end{defn}

In view of the lack of coloring for the center of a corolla, there is a canonical
bijection between the set of partially $\cb$-colored corollas and
the set of $\cb$-boundary data on $S^{1}$, as illustrated in the following picture:
  \be
  \tikzfig{GCPB00} \quad \longleftrightarrow \quad \tikzfig{GCPB0}
  \label{eq:kb}
  \ee

We are now going to associate to the center $v$ of a partially $\cb$-colored
corolla $K$ a vector space $H_{v}^{\cb}$, to be called the \emph{space of colors}
for the vertex $v$. We specify this space as a certain 2-hom space in $\cb$. To
define this space, we make use of the auxiliary datum of a linear order on the 
set $H(v)$ of half-edges incident to the vertex $v$.
The orientation of $D$ naturally induces a cyclic order on $H(v)$; with the chosen
counterclockwise orientation of $D$, this cyclic order is clockwise. A linear order
on $H(v)$ that is compatible with the induced cyclic order is uniquely determined by
the choice of a \emph{root}, or \emph{starting half-edge}, $e\iN H(v)$. 
The choice of $e$ determines a particular 2-hom space $h_{v}^{\cb}(e)$ in $\cb$ 
as a space of invariants. For instance, if for the corolla $K$
shown in Example \ref{exa:exaK} we choose the root $e$ to be the half-edge 
$e_h$ that is labeled by $h\iN\cb(c,a)$, then we associate to $v$ the 2-hom space
  \be
  h_{v}^{\cb}(e_h) \coloneqq \mathrm{End}_{\cb}(c)(\id_{c},h\hocO f\hocO g) \,.
  \ee
Similarly, for the choices $e \eq e_f$ and $e \eq e_g$ we obtain the spaces 
$h_{v}^{\cb}(e_f) \Coloneqq \mathrm{End}_{\cb}(a)(\id_{a},f\hocO g\hocO h)$
and $h_{v}^{\cb}(e_g) \Coloneqq \mathrm{End}_{\cb}(b)(\id_{b},g\hocO h\hocO f)$,
respectively.  

By using the units and counits of dual pairs to turn edges around, we can construct
canonically an isomorphism between any two of the so assigned 2-hom spaces. 
For instance, in the case of the corolla $K$ from Example \ref{exa:exaK}, changing
the root from $h$ to $f$ and then to $g$ gives rise to the chain 
  \be
  h_{v}^{\cb}(e_h) \stackrel{\cong}{\longrightarrow} h_{v}^{\cb}(e_f)
  \stackrel{\cong}{\longrightarrow} h_{v}^{\cb}(e_g)
  \ee
of isomorphisms of vector spaces, whose action on an element 
$\varphi\iN h_{v}^{\cb}(e_h) \eq \mathrm{End}_{\cb}(c)(\id_{c},h\hoc f\hoc g)$
is given by
  \be
  \tikzfig{GCPB2} ~\longmapsto~ \tikzfig{GCPB3} ~\longmapsto~ \tikzfig{GCPB4}
  \ee
 ~\\[-0.5em]
Further, owing to the yanking equations for duals together with \eqref{eq:rotatefc},
after one further step we get back the original element $\varphi$:
  \be
  \tikzfig{GCPB5} ~~=~~ \tikzfig{GCPB2}
  \ee
Altogether this yields a groupoid in $\vct$ that is generated by the diagram
  \be
  \begin{tikzcd}[column sep=1.0em]
  & {h^{\mathcal{B}}_{v}(e_f)}
  \\[5pt]
  {h^{\mathcal{B}}_{v}(e_h)} && {h^{\mathcal{B}}_{v}(e_g)}
  \arrow["\cong", sloped, tail reversed, from=2-1, to=1-2]
  \arrow["\cong", sloped, tail reversed, from=1-2, to=2-3]
  \arrow["\cong"', sloped, tail reversed, from=2-3, to=2-1]
  \end{tikzcd}
  \ee
This groupoid is contractible, i.e.\ between any ordered pair of objects there is a
single morphism, which in particular implies that any two composites of morphisms 
with the same source
and same target are equal. Thus by selecting an element in \emph{any} of the three
2-hom spaces, we simultaneously select an element in \emph{each} of the spaces.

This leads us to the following prescription: Let $K$ be a partially $\cb$-colored 
corolla with center $v$ and $n \eq |H(v)| \,{>}\, 0$. Then to $K$ we associate:
\Enumerate
 \item 
A contractible groupoid $\cg_{v}^{\cb}$. The objects of $\cg_{v}^{\cb}$ are the
elements of the set $H(v) \eq \{e_{1},e_{2},...\,,e_{n}\}$ of half-edges incident 
to $v$, which is conveniently indexed according to an arbitrary choice of compatible
linear order. The morphisms of $\cg_{v}^{\cb}$ are generated by the diagram
  \be
  \begin{tikzcd}
  {e_1} & {e_2} & {e_3} \\ {e_n} & \dotsb & {e_4}
  \arrow[tail reversed, from=1-1, to=1-2] \arrow[tail reversed, from=1-2, to=1-3]
  \arrow[tail reversed, from=1-3, to=2-3] \arrow[tail reversed, from=2-3, to=2-2]
  \arrow[tail reversed, from=2-2, to=2-1] \arrow[tail reversed, from=2-1, to=1-1]
  \end{tikzcd}
  \ee
with relations uniquely determined by requiring that $\cg_{v}^{\cb}$ is contractible,
i.e.\ that each hom-set $\cg_{v}^{\cb}(e_{i},e_{j})$ has a single element.
 \item
A functor $h_{v}^{\cb}\colon \cg_{v}^{\cb} \To \vct$ that acts on objects by 
  \be
  e_{i} \,\longmapsto\, h_{v}^{\cb}(e_{i}) \coloneqq \mathrm{End}_{\cb}(a_{i})
  (\id_{a_{i}},f_{i}^{\epsilon_{i}}\hocO f_{i+1}^{\epsilon_{i+1}} {\hocO} \dotsb \hocO
  f_{i+n-1}^{\epsilon_{i+n-1}}) \,,
  \ee
where the indices are taken mod $n$, $f_{j}$ is the color of
the half-edge $e_{j}$ with $f_{j}^{\epsilon_{j}}\iN\cb(a_{j},a_{j+1})$, and where
$\epsilon_{j} \eq {+}$ if $e_{j}$ is directed away from $v$ and $\epsilon_{j} \eq {-}$
otherwise. The action on the generating morphisms is given by ``dragging the leg around''.
\end{enumerate}

\noindent
We can now assign a space of colors to a partially colored corolla:

\begin{defn}
The \emph{vector space of colors} $H_{v}^{\cb}$ for a partially colored corolla
with at least one leg is the limit 
  \be
  H_{v}^{\cb} \coloneqq \lim h_{v}^{\cb} \,\in\vct \,.
  \label{eq:HvB}
  \ee
For a corolla $K_{a}$ without legs and its single patch $D\,{\setminus}\, v_{a}$ colored
with an object $a\iN\cb$, the space of colors is 
  \be
  H_{v_{a}}^{\cb} \coloneqq \mathrm{End}_{\cb}(a)(\id_{a},\id_{a}) \,.
  \label{eq:HvaB}
  \ee
\end{defn}
 
Being the limit of a contractible groupoid, the space $H_{v}^{\cb}$ is determined
by an isomorphism $ p_{v}^{e}\colon H_{v}^{\cb} \Rarr\cong h_{v}^{\cb}(e)$
for any choice of $e\iN H(v)$, which uniquely extends to a limit
cone over $h_{v}^{\cb}$ with every leg being an isomorphism of vector
spaces. Therefore, by choosing a \emph{color }for the vertex $v$,
i.e.\ an element $\clc\iN H_{v}^{\cb}$, we produce a family
  \be
  \{p_{v}^{e}(\clc)\in h_{v}^{\cb}(e)\}_{e\in H(v)}
  \ee
of 2-morphisms in $\cb$ that is coherent in the sense that for any two members 
$p_{v}^{e_{i}}(\clc)$ and $p_{v}^{e_{j}}(\clc)$ of the family there is a unique 
isomorphism $ h_{v}^{\cb}(e_{i})\Rarr\cong h_{v}^{\cb}(e_{j})$. This isomorphism
is obtained by evaluating the functor $h_{v}^{\cb}$ on the unique morphism
$e_{i}\Rarr\cong e_{j}$ in the groupoid $\cg_{v}^{\cb}$,
such that $p^{e_i}_v(\clc)$ is mapped to $p^{e_j}_v(\clc)$.

\begin{rem}
In still more detail, the relation between the elements in the different spaces of
2-morphisms that are determined by the same color $\clc\iN H_{v}^{\cb}$ can be 
expressed as follows: By selecting a root $e\iN H(v)$ we produce, up to isotopies that
fix the boundary $\partial D \eq S^{1}$, a string diagram with a rectangular
coupon on $D$, which is isotopic relative to $\partial D$ to the string
diagram produced by choosing any other root. Let us illustrate this with 
the corolla \eqref{eq:K}. The choice $e \eq e_h$ results in the string diagram
  \be
  \tikzfig{GCPB9}
  \label{eq:GCPB9}
  \ee
If we choose instead $e \eq e_f$, then the diagram is
  \be
  \tikzfig{GCPB10} ~=~ \tikzfig{GCPB11}
  \label{eq:GCPB1011}
  \ee
which is isotopic to \eqref{eq:GCPB9}.
\end{rem}

So far we only considered 2-morphisms that have all non-trivial 1-morphisms in 
their outputs. This requirement is too restrictive for applications; to be able to
remove it, we introduce the following notion: 

\begin{defn} \label{def:polarization}
A \emph{polarization} on the vertex $v$ of a partially $\cb$-colored corolla
is a partition 
  \be
  H(v) = H^{\rin}(v)\sqcup H^{\rout}(v)
  \ee
of the cyclically ordered set of half-edges into two linearly ordered sets
$H^{\rin}(v)$ and $H^{\rout}(v)$ of \emph{input} and \emph{output} half-edges, in
such a way that any two half-edges of the same type (i.e., either in- or output) 
are consecutive with respect to the cyclic order on $H(v)$ if they are consecutive
with respect to the linear order on $H^{\rin}(v)$ or $H^{\rout}(v)$ that is
induced by the cyclic order on $H(v)$. 
\end{defn}

If $H^{\rin}(v) \eq \emptyset$ or $H^{\rout}(v) \eq \emptyset$,
a polarization reduces to a compatible linear order on $H(v)$.
Note that whether a half-edge of a corolla is an input or output half-edge with
respect to a chosen polarization is not correlated with the direction of the
edge to which it belongs. 

With this notion we obtain a contractible groupoid $\widehat{\cg_{v}^{\cb}}$ 
that is generated by the set of all polarizations on $v$. We have 
a canonical extension 
  \be
  \cg_{v}^{\cb} \lhook\joinrel\xrightarrow{~\simeq~}
  \widehat{\cg_{v}^{\cb}} \xrightarrow{~\simeq~} 1 
  \ee
of groupoids, as well as an extension 
  \be
  \begin{tikzcd}
  {\widehat{\mathcal{G}^{\mathcal{B}}_v}} && {\mathrm{Vect}_{\mathbbm{k}}}
  \\
  {\mathcal{G}^{\mathcal{B}}_v}
  \arrow["\simeq", sloped, hook, from=2-1, to=1-1]
  \arrow["{h^{\mathcal{B}}_v}"', from=2-1, to=1-3]
  \arrow["{\widehat{h^{\mathcal{B}}_v}}", dashed, from=1-1, to=1-3]
  \end{tikzcd}
  \ee
of the functor $h_{v}^{\cb}$, which is defined by setting
  \be
  \widehat{h_{v}^{\cb}}(k) \coloneqq 
  \hom_{\cb(a_{i},a_{j+1})}(f_{i+n-1}^{-\epsilon_{i+n-1}} \hocO {\dotsb} \hocO
  f_{j+2}^{-\epsilon_{j+2}}\hocO f_{j+1}^{-\epsilon_{j+1}}, f_{i}^{\epsilon_{i}}
  \hocO f_{i+1}^{\epsilon_{i+1}} \hocO {\dotsb} \hocO f_{j}^{\epsilon_{j}}) \,.
  \ee
for a polarization $k$ with $H^{\rout}(v) \eq \{e_{i},e_{i+1},...\,,c,e_{j}\}$ and 
$H^{\rin}(v) \eq \{e_{j+1},e_{j+2},...\,,c,e_{i+n-1}\}$ (with indices counted mod $n$).

In the graphical description of a polarization we draw ingoing edges as dashed 
lines. (In the color version, we use in addition two different colors for the
1-morphisms that label the in- and outgoing edges.)

\begin{example}
For the corolla \eqref{eq:K} described in Example \ref{exa:exaK}, we denote by 
  \be
  \tikzfig{GCPB6}
  \label{eq:GCPB6}
  \ee
the polarization $k$ that has $H^{\rin}(v) \eq \{e_h\}$ and 
$H^{\rout}(v) \eq \{e_f,e_g\}$. For this polarization we have 
$\widehat{h_{v}^{\cb}}(k) \eq \hom_{\cb(a,c)}(h^{\vee},f\hocO g)$.
\end{example}
  
It follows that the space $H_{v}^{\cb}$ is also equipped with a unique limit
cone over $\widehat{h_{v}^{\cb}}\colon\widehat{\cg_{v}^{\cb}}\To\vct$
that restricts to the limit cone over $h_{v}^{\cb}$. We denote the legs of this cone
(which are isomorphisms) by 
  \be
  \{p_{v}^{k}\colon H_{v}^{\cb} \Rarr\cong \widehat{h_{v}^{\cb}}(k)\}
  _{k\in\ob\,\widehat{\cg_{v}^{\cb}}} \,.
  \label{eq:pvk}
  \ee
Now from a color $\clc\iN H_{v}^{\cb}$ we obtain an entire family of 2-morphisms 
$\{p_{v}^{k}(\clc)\iN\widehat{h_{v}^{\cb}}(k)\}_{k\in\ob\widehat{\cg_{v}^{\cb}}}$
that are related by dualities. For instance, with the polarization $k$ selected in
\eqref{eq:GCPB6}, the 2-morphism $p_{v}^{k}(\clc)\iN\hom_{\cb(a,c)}(h^{\vee},f\hocO g)$
can be expressed in terms of 
$p_{v}^{e_h}(\clc)\iN\mathrm{End}_{\cb}(c)(\id_{c},h\hocO f\hocO g)$ as 
  \be
  p_{v}^{k}(\clc) ~=~~ \tikzfig{GCPB7}
  \ee
Together with a choice of polarization, a color $c\iN H_{v}^{\cb}$
again determines an isotopy class of string diagrams on $D$ with a rectangular
coupon, and the isotopy class is independent of the choice of polarization.

%%%%%%%%%%%%%%%%%%%%%%%%%%%%%%%%%%%%%%%%%%%%%%%%%%%%%%%%%%%%%%%%%%%%%%%%

\Subsection{Graphical calculus on disks for pivotal bicategories} \label{sec:ufgcpb}

To represent the isotopy class of the string diagrams which a color 
$\clc\iN H_{v}^{\cb}$ on the vertex $v$ of a corolla
produces, without choosing a specific polarization, 
we place a \emph{circular coupon} labeled by $\clc$ at the vertex $v$. For instance, 
  \be
  \tikzfig{GCPB8}
  \label{eq:GCPB8}
  \ee
represents the isotopy class of string diagrams that contains both the diagram
\eqref{eq:GCPB9} and the diagram \eqref{eq:GCPB1011}. Conversely, a diagram
like \eqref{eq:GCPB8} can be represented by a string diagram on $D$ with rectangular 
coupon that corresponds to any choice of polarization and a 2-morphism in the 
associated 2-hom vector space. 

We allot to a partially $\cb$-colored corolla $K$ the space
of colors for its center $v$. We denote this assignment by\,%
  \be
  \calb(K) \coloneqq H_{v}^{\cb} .
  \label{eq:calbk}
  \ee
(The notation $\calb$ is supposed to remind of the `\emph graphical \emph{cal}culus'.)
The assignment \eqref{eq:calbk} is functorial with respect to orientation preserving
embeddings of the standard disk to itself that induce a local
isomorphism of the partially colored embedded corollas, in such a way that the
colors of the patches match while a half-edge may be be mapped to either a 
half-edge with the same orientation and the same color or a half-edge with 
the opposite orientation and the dual color. For instance, the picture
  \be
  K ~=~~ \tikzfig{GCPB12} ~~\hookrightarrow~~ \tikzfig{GCPB13} ~~=~ K'
  \label{eq:KembedK'}
  \ee
shows an embedding of the corolla $K$ into $K'$, with the image of $K$ indicated 
by the shaded region in $K'$, for which the half-edge colored by the 1-morphism $h$ 
is mapped to the one colored by the dual $h^{\vee}$. The action on
the assignment \eqref{eq:calbk} is in this case given by 
  \be
  \calb(K) = H_{v}^{\cb} \iso h_{v}^{\cb}(e_{h})
  = \mathrm{End}_{\cb}(c)(\id_{c},h \hoc f \hoc g)
  = h_{v'}^{\cb}(e_{h^{\vee}}) \iso H_{v'}^{\cb} = \calb(K') \,.
  \ee
Note that due to this functoriality, for a \emph{monochromatic} corolla
$K_{\mathrm{mnc}}$, i.e.\ a corolla each of whose half-edges is colored
with the same 1-morphism and oriented in the same way, the vector
space $\calb(K_{\mathrm{mnc}})$ carries an action of the cyclic group
of appropriate order.

 \medskip

Next we extend the assignment \eqref{eq:calbk} to a symmetric monoidal functor 
  \be
  \calb \Colon \crlla \rarr~ \vct
  \label{eq:def:calb}
  \ee
from a category of partially $\cb$-colored corollas and graphs to $\vct$.
We first introduce the source category of $\calb$. For $\cb$ a pivotal bicategory,
a \emph{boundary datum} of a partially $\cb$-colored corolla is
the $\cb$-boundary datum, in the sense of Definition \ref{def:bdydatum},
for the boundary circle of the underlying disk that is determined by the
colors of the patches of the disk. 

\begin{defn}    
Let $\cb$ be a pivotal bicategory.
$\crlla$ is the following symmetric monoidal category:
\Itemizej
 \item
Objects in $\crlla$ are finite disjoint unions of partially $\cb$-colored corollas. 
(This includes the empty disjoint union $\emptyset$.)
 \item
A morphism of type $K_{1}\,{\sqcup}\dotsb{\sqcup}\, K_{n} \To K_{n+1}$
in $\crlla$ is a partially $\cb$-colored graph $G$ on the standard disk whose
boundary datum coincides with that of $K_{n+1}$, together with an orientation
preserving embedding $D_1\,{\sqcup}\dotsb{\sqcup}\, D_n \,{\hookrightarrow}\, D_{n+1}$
of the underlying disks of the source (the number of which is allowed to be zero) to the 
underlying disk of the target. This embedding is required to induce a local isomorphism 
of the graphs that respects the colorings of the patches, while the orientation or$_e$ 
and color $f_e$ of a half-edge $e$ are required to be respected either on the nose or 
up to simultaneous reversal of orientation and dualizing of the color. Moreover, each 
internal vertex of $G$ must be covered by the image of exactly one 
of the disks $D_1,...\,,D_n$.
 \item
General morphisms in $\crlla$ are obtained by taking disjoint unions of morphisms of
the type just described. The composition $G_{2}\cir G_{1}$ 
of morphisms is given by \emph{blowing up} the internal vertices of $G_{2}$ 
by $G_{1}$, using the embeddings of disks. 
 \item
The symmetric monoidal product on $\crlla$ is given by disjoint union, 
with monoidal unit $\emptyset$.
 \end{itemize}
\end{defn}

\begin{example}
An example for a morphism in $\crlla$ of type
  \be
  G \Colon~ \tikzfig{CALB0} \,\sqcup~\, \tikzfig{CALB1} ~~~\rarr{~~}~~~ \tikzfig{CALB2}
  \ee
is determined by the following disk in which,
as we already did in the picture \eqref{eq:KembedK'} above, the images of the source
disks in the target are indicated by shaded regions:
  \be
  G ~=~~~ \tikzfig{CALB3}
  \label{eq:calb3}
  \ee
An illustration of the blowing-up procedure that defines the composition 
of morphisms in $\crlla$ is given in the following picture:
  \be
  \hspace*{-1.1em}
  \scalebox{0.9}{\tikzfig{CALB5}} ~\circ~ \scalebox{0.9}{\tikzfig{CALB4}}
  ~=~~~ \scalebox{0.9}{\tikzfig{CALB6}}
  \ee
In words, the shaded region within the first disk $D_1$ specifies the embedding 
of the second disk $D_2$ into $D_1$, and in particular the boundary datum for 
the shaded region in $D_1$ matches the boundary datum of the disk $D_2$.
\end{example}

 \medskip

We are now in a position to define the functor \eqref{eq:def:calb}. On objects,
$\calb$ acts as
  \be
  \calb(K_{1}{\sqcup}\dotsb{\sqcup} K_{n})
  \coloneqq \calb(K_{1}) \,{\otk}\dotsb{\otk}\, \calb(K_{n})
  = H_{v_{1}}^{\cb} \,{\otk}\dotsb{\otk}\, H_{v_{n}}^{\cb} \,.
  \label{eq:calbobj}
  \ee
As the case $n \eq 0$, this includes $\calb(\emptyset) \,{\coloneqq}\, \fk$.
To define the action of $\calb$ on morphisms, a more extensive description is 
needed. We demonstrate it for the case of a morphism
$G\colon K_{1}\,{\sqcup}\, K_{2} %\,
       $\linebreak[0]$
       {\rarr~}\, K_{3}$ of the type shown in
\eqref{eq:calb3}: To an arbitrarily chosen element $\clc_{1}\Otk \clc_{2}
\iN\calb(K_{1}) \,{\otk} %\,
       $\linebreak[0]$
       \calb(K_{2}) \eq H_{v_{1}}^{\cb}\Otk H_{v_{2}}^{\cb}$
we assign an element $\clc_3\iN\calb(K_{3}) \eq H_{v_{3}}^{\cb}$ and then obtain
a linear map by linear extension. To specify $\clc_3$, we first choose
polarizations for the vertices $v_{1}\iN K_{1}$ and $v_{2}\iN K_{2}$, say
  \be
  k_{1} ~\colon~~ \tikzfig{CALB7} \qquad\text{and}\qquad k_{2} ~\colon~~ \tikzfig{CALB8}
  \ee
with $k_1$ defined by selecting the edge labeled with $f_6$ (drawn as a dashed line,
and with label displayed in red in the color version) as its single input edge and
$k_2$ defined by selecting the edge labeled with $f_4$ as its single input edge.
Now for $i \eq 1,2$ set $\widetilde{\clc}_{i} \,{:=}\, p_{v_{i}}^{k_{i}}(\clc_{i}) \iN
\widehat{h_{v_{i}}^{\cb}}(k_{i})$, with $p_{v}^{k}$ given by \eqref{eq:pvk}. According
to the discussion in Section \ref{sec:pccp} we obtain, up to isotopies relative
to the boundary of $D$, two string diagrams with rectangular coupons on $D$ that
correspond to the choice of colors and polarizations. These string diagrams are then
pushed forward along the embedding $D_1 \,{\sqcup}\, D_2 \,{\hookrightarrow}\, D_3$,
replacing the images of the corollas $K_{1}$ and $K_{2}$. Thereby we obtain,
up to isotopies fixing the boundary, a string diagram $G'$
with rectangular coupons on $D$ that has the same boundary datum as $K_{3}$:
  \be
  G' ~=~~ \tikzfig{CALB10}
  \ee
We now choose a polarization for the vertex $v_{3}$
of the corolla that is the target of the morphism, e.g. 
  \be
  k_{3}~\colon~~ \tikzfig{CALB9}
  \ee
with the edge labeled with $f_4$ as single input edge.
With this choice we produce a string diagram $G''$ on the standard square $I\Times I$,
uniquely up to isotopies fixing the top and the bottom of the square setwise: 
  \be
  G'' ~=~~~~ \tikzfig{CALB11}
  \ee
(Recall that, by construction, no 1-morphisms are ending on the left and right boundary
segments.) As explained in Section \ref{sec:sdbicat}, such a rectified string diagram
uniquely determines a 2-morphism $\widetilde{\clc}_3\iN\widehat{h_{v_3}^{\cb}}(k_3)$.
This 2-morphism $\widetilde{\clc}_{3}$ determines, in turn, a unique element
$\clc_3 \eq (p_{v_3}^{k_3})^{-1}(\widetilde{\clc}_3) \, 
      $\linebreak[0]$
{\in}\, H_{v_3}^{\cb} {=}\, \calb(K_3)$ which, besides on the colors $\clc_1$ and 
$\clc_2$, only depends on the isotopy class (relative to the boundary) of $G$; in
particular, it does not depend on the auxiliary choice of polarizations. We write
  \be
  \clc_{3} ~=~~~\left\langle~ \tikzfig{CALB12} ~~\right\rangle
  \label{eq:calb12}
  \ee
and refer to $\clc_3$ as the \emph{value} of the string diagram with circular coupon. 
Altogether we thus obtain a linear map 
  \be
  \calb(G)\Colon \calb(K_{1} \,{\sqcup}\, K_{2}) = \calb(K_{1}) \Otk\calb(K_{2})
  \rarr~ \calb(K_{3}) \,.
  \label{eq:calb13}
  \ee
We declare this linear map to be what the functor $\calb$ maps the morphism 
$G\colon K_{1} \,{\sqcup}\, K_{2}\To K_{3}$ to. Moreover, we set
$\calb(G_{1}{\sqcup} G_{2}) \,{:=}\, \calb(G_{1})\Otk\calb(G_{2})$.
This concludes the definition of $\calb$.

\begin{summ} \label{sum:calb}
The symmetric monoidal functor $\calb \colon\crlla \Rarr~ \vct$
from the symmetric monoidal category $\crlla$ of partially $\cb$-colored corollas 
and graphs to $\vct$ maps objects as in \eqref{eq:calbobj} and morphisms as in
\eqref{eq:calb13}. The functor $\calb$ completely
captures the graphical calculus on disks for pivotal bicategories.
\end{summ}

%%%%%%%%%%%%%%%%%%%%%%%%%%%%%%%%%%%%%%%%%%%%%%%%%%%%%%%%%%%%%%%%%%%%%%%%

In the analysis above, as local models for internal vertices of embedded graphs we
have only treated corollas on standard disks. But in fact we can make sense of the
evaluation of an arbitrary
string diagram with circular coupons on $D$ as in \eqref{eq:calb12} without 
reference to embedded disks. In more detail, denote by $V(G)$ the set of internal 
vertices of a partially $\cb$-colored graph $G$ on $D$. By choosing for each internal
vertex $v\iN V(G)$ an embedding of some partially colored corolla $K_{v}$, we can
turn $G$ into a morphism $\widehat{G}\colon\bigsqcup_{v\in V(G)}K_{v}\To K_{G}$ in 
the category $\crlla$, where $K_{G}$ is the unique corolla on $D$ that has the same
boundary datum as $G$. The embeddings of corollas canonically induces an isomorphism 
$\bigotimes_{v\in V(G)}H_{v}^{\cb} \,{\iso}\, \bigotimes_{v\in V(G)}\calb(K_{v})$.
We thus obtain a linear map 
  \be
  \hcalb(G)\colon \bigotimes_{v\in V(G)}\!\! H_{v}^{\cb} \rarr{~\cong~}
  \bigotimes_{v\in V(G)}\!\! \calb(K_{v})\stackrel{\calb(\widehat G)}{\vla}
  \calb(K_{G}) = H_{v_G^{}}^{\cb} \,,
  \label{eq:hcalb}
  \ee
with $v_G^{}$ the single vertex of the corolla $K_G$.
Owing to the existence of coherent isomorphisms between the vector spaces 
$\bigotimes_{v\in V(G)}\calb(K_{v})$ arising from different choices of embeddings 
of corollas, the so obtained map $\hcalb(G)$ does not depend on that choice.
Moreover, $\hcalb(G)$ is unaffected by any isotopy of $G$ that fixes its boundary. 
We can therefore give

\begin{defn} \label{def:value}
Let $G_{\clc}$ be a \emph{fully colored} graph, with the coloring of its internal
vertices given by $\clc\iN\bigotimes_{v\in V(G)}H_{v}^{\cb}$. The \emph{value}
$\langle G_{\clc}\rangle\iN H_{v_{G}}^{\cb}$ of $G_{\clc}$ is defined to be the element
  \be
  \langle G_{\clc}\rangle \coloneqq \hcalb(G)(\clc) 
  \label{eq:evalG}
  \ee
of the vector space $H_{v_{G}}^{\cb}$ assigned to the vertex of the corolla that
corresponds to the boundary datum of $G_{\clc}$.
\end{defn}

We think of the value $\langle G\rangle$ of a fully colored graph $G$ as the color for
the vertex $v_G\iN K_G$ that is obtained by replacing $G$ with the corolla $K_{G}$. 

\begin{example}
The following picture shows a graph $G$ and the corresponding corolla $K_G$:
  \be
  G ~=~~ \scalebox{0.7}{\tikzfig{CALB13}} \quad\longmapsto\quad
  \scalebox{0.7}{\tikzfig{CALB14}} ~~=~ K_{G} \,.
  \ee
In this case we have 
  \be
  \begin{aligned}
  \hcalb(G)\Colon H_{v_{1}}^{\cb}\Otk H_{v_{2}}^{\cb} & \rarr~ H_{v_{G}}^{\cb} \,.
  \\[3pt]
  \clc = \clc_{1}\Otk \clc_{2} & \xmapsto{~~~} \langle G_{\clc}\rangle \,,
  \end{aligned}
  \ee
and the value $\langle G_{\clc}\rangle\iN H_{v_{G}}^{\cb} \eq \calb(K_{G})$ is the same
as the one in \eqref{eq:calb12}, except that now the color
$\clc \eq \clc_{1}\Otk \clc_{2}$ is to be understood as living in the vector space
$H_{v_{1}}^{\cb}\Otk H_{v_{2}}^{\cb}$ associated to the internal vertices of $G$.
\end{example}

%%%%%%%%%%%%%%%%%%%%%%%%%%%%%%%%%%%%%%%%%%%%%%%%%%%%%%%%%%%%%%%%%%%%%%%%

\Subsection{Presentations of two categories of corollas} \label{sec:prstCrll}

We can think of the functor $\calb\colon\crlla\To\vct$ as a rule for
assigning a space of morphisms to every partially colored corolla and
a \emph{composition map} to every partially colored graph. 
An important observation is that the composition map obtained from any partially
colored graph can be decomposed into a finite sequence of maps each of which is
of one of four types, to be introduced in Proposition \ref{prop:CoroDecom}. 
We call an internal edge that connects a pair of distinct internal vertices a 
\emph{regular} edge, and any other internal edge a \emph{loop}. A partially 
colored embedded graph in the standard disk is called \emph{trivial} if it is isotopic
to a partially colored corolla or does not contain any internal vertex. A morphism
in $\crlla$ is called trivial if its underlying partially colored
embedded graph is trivial. A composition of the type 
  \be
  G_{2}\pcirc G_1 \coloneqq (K_1\,{\sqcup}\, K_2
  \xrightarrow{~G_1\,\sqcup\,\id~} K_3\,{\sqcup}\, K_2 \xrightarrow{~~G_2~~}K_4)
  \ee
is called a \emph{partial composition} of the morphisms $G_1$ and $G_2$.

\begin{prop} \label{prop:CoroDecom}
Any non-trivial morphism in $\crlla$ can be decomposed into a finite disjoint 
union of partial compositions of morphisms of the following types:
  \be
  \begin{aligned}
  \mathrm{(a)}~~ & \tikzfig{CALB15} \qquad\quad & \mathrm{(b)}~~ & \tikzfig{CALB16}
  \\[-2pt]
  & \text{operadic composition} && \text{~~~partial trace map}
  \\[4pt]
  \mathrm{(c)}~~ & \tikzfig{CALB17} \qquad &\quad \mathrm{(d)}~~ & \tikzfig{CALB18}
  \\[-2pt]
  & \text{~~horizontal product} && \text{~~~~~~whiskering}
  \end{aligned}
  \label{eq:abcd}
  \ee
\end{prop}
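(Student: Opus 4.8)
The plan is to treat this as a normal-form result for planar embedded graphs and to argue by induction, peeling off one elementary move at a time. First I would reduce to the essential situation. By definition a general morphism of $\crlla$ is a disjoint union of morphisms each having a \emph{single} target corolla, and the decomposition claimed in the statement is itself allowed to finish with an outer disjoint union; hence it suffices to decompose a single morphism $G\colon K_1\sqcup\dots\sqcup K_n\to K_{n+1}$ whose target is one corolla. For such a $G$ the natural complexity measures are the number $n$ of internal vertices (equal to the number of source disks) and the number of internal edges, the latter split into regular edges and loops exactly as in the paragraph preceding the statement. I would run a double induction, primary on $n$ and secondary on the number of loops.

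The inductive step is driven by choosing a feature of $G$ and enclosing it in a small sub-disk. \emph{Regular edge.} If some regular edge $\epsilon$ joins distinct vertices $v_i,v_j$, a regular neighbourhood of $v_i\cup\epsilon\cup v_j$ is an embedded disk meeting the rest of $G$ only transversally along the remaining edges and containing no third vertex; blowing it down to a single corolla produces a morphism $G/\epsilon$ with $n-1$ vertices, and by construction $G=(G/\epsilon)\pcirc a$, where $a$ is the operadic composition that reinserts $\epsilon$. \emph{Loop.} If instead $v$ carries a loop $\ell$, the analogous neighbourhood of $v\cup\ell$ gives $G=(G/\ell)\pcirc b$ with $b$ the partial trace closing $\ell$; here $n$ is unchanged but the number of loops drops. \emph{Edgeless case.} Once all internal edges have been removed one is left with $n$ vertices carrying only legs; enclosing two vertices that are adjacent in the planar arrangement yields $G=G'\pcirc c$ with $c$ a horizontal product and $n$ decreased, while whiskerings (d) are inserted to account for the 1-morphism strands that run alongside a coupon without meeting it and to realise the correct cyclic interleaving of legs on $\partial D$. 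The base case $n=1$ with no internal edge is precisely a corolla, hence a trivial morphism, and is excluded by hypothesis.

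To make these steps legitimate I would first record an auxiliary lemma: partial composition $\pcirc$ and disjoint union are, up to the coherence already present in the blow-up definition of composition in $\crlla$, associative and unital, so that a decomposition of the smaller graph ($G/\epsilon$, $G/\ell$, or $G'$) may be post-composed with the peeled-off basic move and still be read as a finite disjoint union of partial compositions of the four types. This is where the grammar ``disjoint union of partial compositions'' is actually verified rather than merely used.

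The hard part will be the topological bookkeeping, not the algebra. Through every cut and merge three data must be tracked simultaneously: the cyclic order of half-edges at each vertex together with their placement on $\partial D$ (planarity); the coloring of the patches by objects of $\cb$, so that every elementary move is itself a legitimately colored morphism of $\crlla$; and the convention relating the orientation of a half-edge to the duality of its color, under which a cut edge may reappear as a pair of legs with reversed orientation and dualised label. The delicate point, which arises in the edgeless case, is to guarantee that two enclosable ``adjacent'' vertices always exist without forcing a non-planar rearrangement; I expect this to follow from an innermost-disk argument applied to the faces of the embedded graph, but checking that the chosen separation simultaneously respects the patch colors and the half-edge duality data is where the real effort lies.
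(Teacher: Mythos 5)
Your proposal is correct and takes essentially the same approach as the paper's proof: both arguments excise a chosen feature (regular edge, loop, pair of vertices, or passing strand) inside a small embedded disk, factor the given morphism as a partial composition with the corresponding elementary morphism of type (a)--(d), and terminate by finiteness of the graph. The only cosmetic difference is organizational --- the paper runs four sequential phases (all regular edges, then all loops, then vertex pairs, then whiskerings) instead of your double induction on the number of vertices and loops, and it treats the planar bookkeeping you flag in the edgeless case just as informally as you do.
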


\begin{proof}
(1) If the morphism $G$ does not have any regular edge, jump to step (2) (with 
$G_n' \eq G$). Otherwise, pick a regular edge $e$ of $G$ and embed the standard disk
with image a small disk-shaped
neighborhood containing $e$. This embedding $\varphi$ pulls back a partially colored
graph on the standard disk, which after parametrizing each internal vertex by a 
corolla gives rise to a morphism $G_1$ of type (a). We thus have 
$G \eq G'_1\,{\pcirc}\,G_1^{\sss(1)}$, where $G'_1$ is obtained by replacing the part
of $G$ that is contained in the image of $\varphi$ by a corolla. We repeat this 
procedure until we end up with
$G \eq G'_n \,{\pcirc}\, G_n^{\sss(1)} \,{\pcirc}\dotsb{\pcirc}\, G_1^{\sss(1)}$, 
where each $G_i^{\sss(1)}$ is of type (a) and $G'_n$ does not contain any regular
edge. This expression is reached after finitely many steps, because all graphs are 
finite and in each step the number of internal vertices decreases. If $G'_{n}$ is
isotopic to a corolla, then the composite $G'_n\,{\pcirc}\, G_n^{\sss(1)}$ is of
type (a) and we are done. Otherwise we proceed to step (2).
 \\[2pt]
(2) If the morphism $G'_n$ does not have any loop, jump to step (3)
(with $G_m'' \eq G_n'$). Otherwise, pick a loop of $G'_n$, and embed a standard disk 
to a small neighborhood that contains that loop. This gives $G \eq G''_1\,{\pcirc}\,
G_{1}^{\sss(2)}\,{\pcirc}\, G_n^{\sss(1)}\,{\pcirc}\dotsb{\pcirc}\, G_1^{\sss(1)}$
with $G_1^{\sss(2)}$ of type (b). We repeat this procedure until we end up with
$G \eq G''_m\,{\pcirc}\, G_{m}^{\sss(2)}\,{\pcirc}\dotsb{\pcirc}\, G_1^{\sss(2)}
\,{\pcirc}\, G_n^{\sss(1)}\,{\pcirc}\,\dotsb\,{\pcirc}\, G_1^{\sss(1)}$ with each 
$G_j^{\sss(2)}$ of type (b) and $G''_m$ not containing any loop. If $G''_m$ is
isotopic to a corolla, then $G''_m\,{\pcirc}\, G_m^{\sss(2)}$ is of type
(b) and we are done. Otherwise, $G''_m$ is a union of corollas
and edges that do not contain internal vertices, and we proceed to step (3). 
 \\[2pt]
(3) If the morphism $G''_m$ contains at most one internal vertex, jump to step (4)
(with $G_l''' \eq G_m''$). Otherwise, pick a pair of internal vertices of $G''_m$ 
and embed a standard disk to a small neighborhood containing them. This gives 
$G \eq G'''_1\,{\pcirc}\, G_1^{\sss(3)}\,{\pcirc}\, G_m^{\sss(2)} \,{\pcirc}\dotsb
{\pcirc}\, G_1^{\sss(2)}\,{\pcirc}\, G_n^{\sss(1)}\,{\pcirc}\dotsb{\pcirc}\, G_1^{\sss(1)}$
with $G_1^{\sss(3)}$ of type (c). We repeat this procedure until
  \be
  G = G'''_l\,{\pcirc}\, G_l^{\sss(3)}\,{\pcirc}\dotsb{\pcirc}\, G_{1}^{\sss(3)}
  \,{\pcirc}\, G_m^{\sss(2)}\,{\pcirc}\dotsb{\pcirc}\, G_1^{\sss(2)}
  \,{\pcirc}\, G_n^{\sss(1)}\,{\pcirc}\dotsb{\pcirc}\, G_1^{\sss(1)} ,
  \ee
where each $G_{k}^{\sss(3)}$ is of type (c) and $G'''_l$ contains
a single internal vertex (and neither regular edges nor loops). If $G'''_l$ is isotopic
to a corolla, then $G'''_l\,{\pcirc}\, G_l^{\sss(3)}$ is of type (c) and we are 
done. Otherwise, $G'''_l$ consists of a single corolla and multiple edges without 
internal vertices, and we proceed to step (4).
 \\[2pt]
(4) After consecutively replacing each of the (finitely many) disk-shaped 
neighborhoods in $G'''_l$ that are of type (d) by a corolla we arrive at the desired
decomposition 
  \be
  G = G_k^{\sss(4)}\,{\pcirc}\,\dotsb\,{\pcirc}\, G_1^{\sss(4)}
  \,{\pcirc}\, G_l^{\sss(3)}\,{\pcirc}\,\dotsb\,{\pcirc}\, G_1^{\sss(3)}
  \,{\pcirc}\, G_m^{\sss(2)}\,{\pcirc}\,\dotsb\,{\pcirc}\, G_2^{\sss(2)}
  \,{\pcirc}\, G_n^{\sss(1)}\,{\pcirc}\,\dotsb\,{\pcirc}\, G_1^{\sss(1)},
  \ee
with all $G_p^{\sss(4)}$'s of type (d).
\end{proof}

As indicated in the list \eqref{eq:abcd}, we refer to the linear maps 
that are obtained by evaluating the functor $\calb\colon\crlla\To\vct$
at morphisms of the types (a), (b), (c), and (d) as operadic compositions,
partial trace maps, horizontal products, and whiskerings, respectively. Note that 
in our formulation of the graphical calculus, the horizontal product is \emph{not} a
special case of operadic composition, and whiskering is \emph{not} a special case of
horizontal product. Owing to Proposition \ref{prop:CoroDecom}, the linear map obtained 
from evaluating $\calb$ at any non-trivial morphism can be written either as a partial 
composite of such elementary maps or as a tensor product of such partial composites.
 
Next we consider the subcategory of $\crlla$ that contains all objects
of $\crlla$, but only those morphisms all of whose underlying graphs are connected 
in the disks they are embedded in. We denote this category by $\conncrlla$.
For instance, a finite disjoint union of morphisms 
of type (a) belongs to $\conncrlla$, whereas a morphism of type (c) does not. We have

\begin{cor} \label{cor:ConnCoroDecom}
The subcategory $\conncrlla$ is generated, under monoidal product $($i.e., disjoint
union$)$ and partial composition by the trivial morphisms with connected embedded 
graphs and by the morphisms of the types $\mathrm{(a)}$ and $\mathrm{(b)}$.
\end{cor}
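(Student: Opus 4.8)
The plan is to re-run the decomposition procedure of Proposition~\ref{prop:CoroDecom} and to observe that, for a morphism whose underlying graph is connected in each disk, only the types (a) and (b) are ever produced, while the disconnected types (c) and (d) never occur. As a warm-up I would record that the proposed generators already lie in $\conncrlla$ and that the two admissible operations preserve this property. A type-(a) morphism is a pair of corollas joined by a regular edge and a type-(b) morphism is a single corolla carrying a loop, so both have connected embedded graphs; trivial morphisms with connected graphs are connected by hypothesis. The monoidal product places its factors in disjoint disks and so preserves connectivity in each embedded disk, and a partial composition $G_2\pcirc G_1$ replaces one internal vertex of $G_2$ by the graph $G_1$, reattaching to the legs of $G_1$ all the edges formerly incident to that vertex; if $G_1$ and $G_2$ are each connected, then every component of $G_2$ minus the blown-up vertex is reattached to the connected piece $G_1$, so $G_2\pcirc G_1$ is again connected. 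Hence the subcategory generated by the proposed data under $\sqcup$ and $\pcirc$ is contained in $\conncrlla$.

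For the converse I would argue one disk at a time, since a morphism of $\conncrlla$ is the disjoint union of its restrictions to the individual disks, each of which is connected; if such a restriction is trivial it is already a generator. For a non-trivial connected restriction $G$ I would invoke steps (1) and (2) of the proof of Proposition~\ref{prop:CoroDecom}, contracting regular edges by type-(a) morphisms and then excising loops by type-(b) morphisms. The point is that these two steps already exhaust a connected graph. The geometric input is the following connectivity observation: in a connected embedded graph with at least one internal vertex, any two distinct internal vertices are joined by a path of regular edges. Indeed, the boundary circle is not part of $|G|$ and the legs end at distinct boundary points, so a path in $|G|$ between two internal vertices can only travel along internal edges; loops join a vertex to itself and contribute nothing, leaving regular edges as the sole means of connecting distinct internal vertices.

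Granting this, contracting all regular edges in step (1) merges every internal vertex into a single one, so the resulting outer graph $G'_n$ carries only loops and legs at that one vertex; step (2) then removes the loops, leaving a single corolla, which is a trivial connected morphism and hence a generator. The same connectivity observation shows there are no free edges (an edge with neither endpoint at an internal vertex would form its own component), so the outer graph never degenerates into the ``single corolla plus stray edges'' or ``two separate vertices'' configurations that trigger the type-(d) and type-(c) moves in steps (3) and (4). Thus $G$ is a partial composite of type-(a), type-(b), and trivial connected morphisms, and assembling the disks back together via $\sqcup$ yields the claimed presentation.

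The main obstacle is the connectivity lemma itself: one must make precise, against the half-edge formalism in the definition of a partially $\cb$-colored graph, that distinct internal vertices of a connected embedded graph are linked by regular edges, and that edge contraction in step (1) therefore terminates at exactly one internal vertex without ever forcing a type-(c) split. Once that is in hand, the remainder is bookkeeping layered on top of Proposition~\ref{prop:CoroDecom}.
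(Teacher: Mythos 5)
Your proof is correct and takes essentially the approach the paper intends: Corollary \ref{cor:ConnCoroDecom} is stated there without a separate proof, as the immediate observation that running steps (1) and (2) of the proof of Proposition \ref{prop:CoroDecom} on a graph that is connected in its disk already terminates in a single corolla, so the configurations triggering the type (c) and type (d) moves never arise. Your connectivity lemma (distinct internal vertices can only be joined through regular edges, since legs dead-end on the boundary and loops connect nothing new) together with the closure checks for $\sqcup$ and $\pcirc$ simply makes explicit what the paper leaves implicit.
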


We denote by 
  \be
  \conncalb\Colon \conncrlla \,{\hookrightarrow}\, \crlla \Rarr{\calb} \vct
  \ee
the restriction of the functor $\calb$ to the subcategory $\conncrlla$. According to 
Corollary \ref{cor:ConnCoroDecom}, evaluating $\conncalb$ at any non-trivial morphism
in its domain produces a partial composite of operadic compositions and partial trace 
maps, or a tensor product of such partial composites.

\begin{rem}
Inspiration for the constructions in this section comes from the study 
\cite{costK} of (symmetric) operads, cyclic operads and modular operads as 
symmetric monoidal functors defined on suitable categories of graphs. Here we work 
in a somewhat different setting: we deal with categories of graphs embedded in 
disks instead of graphs, and there is an additional coloring on the operads.
When the bicategory $\cb$ has a single object, i.e.\ $\cb$ is the delooping of 
some (strictly) pivotal tensor category, then the functor $\calb$ gives us its
underlying (not necessarily symmetric) colored modular operad. Since in the definition
of the category $\crlla$ we allow non-connected graphs to be embedded in the standard 
disk, this operad has horizontal products for operations, and the restricted functor 
$\conncalb$ gives us its underlying non-symmetric colored modular operad.
\end{rem}

%%%%%%%%%%%%%%%%%%%%%%%%%%%%%%%%%%%%%%%%%%%%%%%%%%%%%%%%%%%%%%%%%%%%%%%%

\Subsection{Functoriality of the graphical calculus\label{sec:RSF}}

Let $\cb$ and $\cb'$ be strictly pivotal bicategories. A \emph{lax functor} from
$\cb$ to $\cb'$ is a triple $(F,F^{(2)},F^{(0)})$, where $F$ stands both for a map 
$F\colon\ob\cb\To\ob\cb'$ of objects and, for every pair of objects $a,b\iN\cb$,
for a \emph{local functor} $F\colon\cb(a,b)\To\cb'(Fa,Fb)$,
while $F^{(2)}$ and $F^{(0)}$ are families of natural transformations of the form
  \be
  \begin{tikzcd}
  \mathcal{B}(a,b)\Times \mathcal{B}(b,c) && \mathcal{B}(a,c)
  \\[5pt]
  \mathcal{B}'(Fa,Fb)\Times \mathcal{B}'(Fb,Fc) && \mathcal{B}'(Fa,Fc)
  \arrow["\hoco", from=1-1, to=1-3]	
  \arrow["F", from=1-3, to=2-3]
  \arrow["{F\times F}"', from=1-1, to=2-1]
  \arrow["{\hoco'}"', from=2-1, to=2-3]
  \arrow["{F^{(2)}}"', shorten <=20pt, shorten >=20pt, Rightarrow, from=2-1, to=1-3]
  \end{tikzcd}
  \ee
and
  \be
  \begin{tikzcd}
  1 & \mathcal{B}(a,a)
  \\
  & \mathcal{B}'(Fa,Fa)
  \arrow["{\mathrm{id}_a}", from=1-1, to=1-2]
  \arrow[""{name=0, inner sep=0}, "{\mathrm{id}_{Fa}}"', curve={height=12pt},
           end anchor={west}, from=1-1, to=2-2]
  \arrow["F", from=1-2, to=2-2]
  \arrow["{F^{(0)}}"{pos=0.6}, shift right=2, shorten <=9pt, shorten >=2pt, Rightarrow, from=0, to=1-2] 
  \end{tikzcd}
  \ee
for every triple $a,b,c\iN\cb$, where $\hoco$ and $\hoco'$ are the horizontal 
composition functors of $\cb$ and $\cb'$, respectively. 
$F^{(2)}$ and $F^{(0)}$ are called the \emph{lax functoriality constraint} and 
the \emph{lax unity constraint}, respectively; they are required to be natural 
and to satisfy conditions akin to the axioms of an algebra (i.e., a monoid object) 
in a monoidal category. The naturality of the lax unity constraint $F^{(0)}$ is 
redundant because the terminal category $1$ contains only the identity morphism.

We draw the component of $F^{(2)}$ on 1-morphisms $f$ and $g$ and the component of
$F^{(0)}$ on $\id_a$ as the string diagrams
  \be
  \tikzfig{RSFL0} \qquad\quad \text{and} \qquad\quad
  \raisebox{-0.7em}{\tikzfig{RSFL1}}
  \ee
respectively (throughout this section we use the conventional graphical calculus 
for bicategories). That is, an unlabeled trivalent vertex stands for a component 
of the lax functoriality constraint $F^{(2)}$ and an unlabeled univalent vertex 
for a component of the lax unity constraint $F^{(0)}$,
and we suppress both the symbol for the horizontal composition and the colors 
that indicate the objects labeling the different regions of the canvas.
To graphically describe the evaluation of the functor $F$ on a 2-morphism, we use 
(similarly as in e.g.\ \cite{mell4}) the symbol $F$ followed by a \emph{window} 
that encloses the string diagram that expresses the 2-morphism.
The defining properties of lax functoriality 
then amount to the following equalities of string diagrams:
\Enumerate
 \item
\emph{Naturality} is expressed as the family of equalities
  \be
  \tikzfig{RSF0} \quad=\quad \tikzfig{RSF1}
  \label{eq:laxFnat}
  \ee
for all objects $a,b,c\iN\cb$, 1-morphisms 
$f,f'\iN\cb(a,b)$ and $g,g'\iN\cb(b,c)$, and 2-morphisms $\alpha\colon f\Rightarro f'$
and $\beta\colon g\Rightarro g'$. 
 \item
\emph{Lax associativity} is expressed as the equalities
  \be
  \tikzfig{RSF2} ~~=~~ \tikzfig{RSF3}
  \label{eq:laxFass}
  \ee
for all composable triples $f,g,h$ of 1-morphisms in $\cb$.
 \item
\emph{Lax left and right unity} are expressed as
  \be
  \tikzfig{RSF4} ~~=~~ \tikzfig{RSF5} ~~=~~ \tikzfig{RSF6}
  \label{eq:laxFuni}
  \ee
for all pairs of objects $a,b\iN\cb$ and all 1-morphisms $f\iN\cb(a,b)$.
 \end{enumerate}

For every composable string $a_1\Rarr{f_1}a_2\Rarr{f_2}{\dotsb}\Rarr{f_n}a_{n+1}$ 
of 1-morphisms in $\cb$, due to the lax associativity there is a unique 2-morphism 
  \be
  F_{f_1,\dotsc,f_n}^{(n)}\Colon Ff_{1}\Dots Ff_{n}\Longrightarrow F(f_1\Dots f_n) \,,
  \ee
from the composite of the 1-morphisms $Ff_i$ to the image of the composite of the
1-morphisms $f_i$ that is obtained by composing appropriate components of the lax
functoriality constraint in arbitrary order. This 2-morphism $F_{f_1,\dotsc,f_n}^{(n)}$
is in general not an isomorphism.  We depict it by the string diagram
  \be
  \tikzfig{RSF7}
  \ee

Dually, an \emph{oplax functor} between bicategories $\cb$ and 
$\cb'$ is a triple $(G,G_{(2)},G_{(0)})$, where $G_{(2)}$ and $G_{(0)}$, 
called \emph{oplax functoriality constraint} and \emph{oplax unity constraint},
are natural and obey conditions akin to those of a coalgebra in a 
monoidal category. (For details, see \cite[Sect.\,4.1]{JOYa}.)

Let now $F \,{\equiv}\, (F,F^{(2)},F^{(0)},F_{(2)},F_{(0)}) \colon\cb\To\cb'$ be a 
functor with both lax and oplax structures. 
The lax and oplax structures allow us to transform coupons in a way
that preserves valency.

\begin{defn} \label{def:Fconj}
Let $\alpha\colon f_1\hocO\dotsb\hocO f_m\,{\xRightarrow{~~}}\, g_1\hocO\dotsb\hocO 
g_n$ be a 2-morphism in a bicategory $\cb$, with $f_1,...\,,f_m$ and $g_1,...\,,g_n$
1-morphisms in $\cb$, and let $F\colon\cb\Rarr~\cb'$ be a functor with
lax and oplax structures between bicategories. The \emph{$F$-conjugate}
of $\alpha$ is the 2-morphism $\alpha^{F}$ in the target bicategory
$\cb'$ that is obtained by composition with the lax and oplax constraints: If
all the 1-morphisms in the domain and codomain of $\alpha$ are non-trivial, then
the $F$-conjugate of $\alpha$ is the composite
  \be
  \alpha^{F} \Colon F\!f_{1}{\hoco}\dotsb{\hoco} F\!f_{m}
  \xRightarrow{F_{f_{1},\dotsc,f_{m}}^{(m)}} F(f_{1}{\hoco}\dotsb{\hoco} f_{m})
  \xRightarrow{\,F\alpha_{\phantom{(}}} F(g_{1}{\hoco}\dotsb{\hoco} g_{n})
  \xRightarrow{F_{(n)\,g_{1},\dotsc,g_{n}}} F\!g_{1}{\hoco}\dotsb{\hoco} F\!g_{n}
  \ee
which is represented by the string diagram
  \be
  \tikzfig{RSF8}
  \label{eq:F-conj}
  \ee
If any of the 1-morphisms is trivial, we need to insert in addition the
lax and/or oplax unity constraints accordingly. For instance, if the
1-morphisms $f_1$ and $g_n$ are trivial, then $\alpha^{F}$ is defined to be
  \be
  \tikzfig{RSF50}
  \label{eq:F-conj-1}
  \ee
\end{defn}

\begin{rem}
According to Definition \ref{def:Fconj}, the precise form of the conjugation depends 
on the notion of a trivial 1-morphism. Conventionally, by the trivial 1-endomorphism 
of an object $a$ in a bicategory $\cb$ one means the identity morphism
$\id_a \iN \cb(a,a)$. However, in order to correctly describe the functoriality
of graphical calculi in terms of a monoidal natural transformation (see Theorem
\ref{thm:RSFconj} below), we need to 
sharpen this notion of triviality: in our context, a 1-morphism is trivial 
iff it is an identity 1-morphism \emph{and} it is not an edge color.
\end{rem}

For a generic functor with both lax and oplax structures there is no reason for 
the conjugation to preserve compositions or partial traces, nor to respect the 
coherent isomorphisms between 2-hom spaces that are related by dualities. To deal with
a functor that does satisfy such relations we need to impose appropriate properties on
the lax and oplax structures. Specifically, we need the following 
notions. (In the 1-object case, i.e.\ for monoidal categories, these have been 
studied in e.g.\ \cite{mcSt} and have found applications in constructions of 
topological field theories \cite{mulev3}.)

\begin{defn}
Let $F \,{\equiv}\, (F,F^{(2)},F^{(0)},F_{(2)},F_{(0)})\colon\cb\to\cb'$ be a functor
between two strictly pivotal bicategories that is equipped with lax and oplax structures. 
 \Itemize
 \item[{\rm (i)}]
$F$ is called \emph{rigid} if $F$-conjugation preserves the units and counits
of the duals, i.e.\ if for all $a,b\iN\cb$ and all $f\iN\cb(a,b)$ we have
$F(f^\vee) \eq (Ff)^\vee$ and 
  \be
  \tikzfig{RSF9}\quad=\quad\tikzfig{RSF10}
  \label{eq:Frigid}
  \ee
(the `antenna' on the left hand side stands for the oplax unit constraint 
for the object $b$) as well as similar conditions for the coevaluations.
 \item[{\rm (ii)}]
$F$ is called \emph{separable} if for every pair $a\Rarr{\,f} b\Rarr{\,g} c$
of composable 1-morphisms the lax and oplax structures are related by
  \be
  \tikzfig{RSF11} \quad=\quad \tikzfig{RSF12}
  \label{eq:Fseparable}
  \ee
 \item[{\rm (iii)}]
$F$ is called \emph{Frobenius}\,%
if for every triple $a\Rarr{\,f} b\Rarr{\,g} c\Rarr{\,h} d$
of composable 1-morphisms the two compatibility relations
  \be
  \tikzfig{RSF13} \quad~~=\quad \tikzfig{RSF14}
  \label{eq:Ffrob1}
  \ee
and 
  \be
  \tikzfig{RSF15} \quad~~=\quad \tikzfig{RSF16}
  \label{eq:Ffrob2}
  \ee
between the lax and oplax structure are fulfilled.
\end{itemize}
\end{defn}

\begin{rem} \label{rem:FrobFrob}
It is worth stressing that the `Frobenius functors' considered here are 2-functors 
between bicategories. This notion should not be confused with the one of a Frobenius
functor between categories, defined as a functor admitting a two-sided adjoint.
A rigid separable Frobenius functor equips the image of the identity 1-morphism 
of any object in the domain bicategory with the structure of a $\Delta$-separable 
symmetric Frobenius algebra, equips the image of every 1-morphism in the domain 
bicategory with the structure of a bimodule, and equips the conjugate of every
2-morphism in the domain bicategory with the structure of a bimodule morphism.
(Here the notions of bimodule and bimodule morphism are generalized to the
bicategorical setting, compare e.g.\ \Cite{Sect.\,2.2}{caRun3}.)
\end{rem}

We will now show that a rigid separable Frobenius functor \emph{almost} preserves 
the graphical calculus on disks. Let $K$ be a partially $\cb$-colored corolla on the
standard disk and let $F\,{\equiv}\, (F,F^{(2)},F^{(0)},F_{(2)},F_{(0)})\colon\cb\To\cb'$
be a rigid separable Frobenius functor. The map of objects and the local functors 
that are entailed by $F$ give rise to a symmetric monoidal functor 
$F_{*}\colon\crlla\To\pcrlla$ by accordingly changing the colors of the patches and 
of the edges of the partially colored graphs. Moreover, since the change of colors 
does not affect the connectedness of the embedded graphs, this functor $F_*$ restricts
to a functor $F_{*}\colon\conncrlla\To\pconncrlla$. Restricting to the
subcategory $\conncrlla$ turns out to have an important consequence:

\begin{thm} \label{thm:RSFconj}
Let $F\colon\cb\To\cb'$ be a rigid Frobenius functor between two strictly pivotal
bicategories. The $F$-conjugation introduced in Definition $\ref{def:Fconj}$
canonically induces a monoidal natural transformation
  \be
  \begin{tikzcd}
  \conncrlla
  \\
  && {\mathrm{Vect}_{\mathbbm{k}}}
  \\
  \pconncrlla
  \arrow["{F_*}"', from=1-1, to=3-1]
  \arrow[""{name=0,anchor=center,inner sep=0},"{\!\!\!\GCal_{\mathcal B}^{\mathrm{conn}}}", from=1-1, to=2-3]
  \arrow["{\GCal_{\mathcal{B}'}^{\mathrm{conn}}}"', from=3-1, to=2-3]
  \arrow["{(-)^F}"{pos=0.4},shift right=2,shorten <=13pt,shorten >=13pt,Rightarrow, from=0, to=3-1]
  \end{tikzcd}
  \ee
$($to which we still refer as $F$-conjugation$)$. Its component at a corolla
$K\iN\conncalb$ is given by 
  \be
  (-)_K^F\Colon \conncalb(K) \stackrel{p_v^k}{\xrightarrow{\hspace*{0.9cm}}}
  \widehat{h_{v}^{\cb}}(k) \stackrel{(-)^F}{\xrightarrow{\hspace*{0.9cm}}}
  \widehat{h_{v'}^{\cb'}}(k) \stackrel{(p_{v'}^k)^{-1}}{\xrightarrow{\hspace*{0.9cm}}}
  \conncalb(F_*K)
  \label{eq:FconjK}
  \ee
for any choice of polarization $k$ on the vertex $v\iN K$, where
$p_v^k$ is the isomorphism defined in formula \eqref{eq:pvk} and where
the labels of all edges are treated as non-trivial 1-morphisms, while
$(-)_{K_1\sqcup K_2}^F \,{\coloneqq}\, (-)_{K_1}^F\Otk(-)_{K_2}^F$.
\end{thm}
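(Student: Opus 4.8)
The plan is to show that the family of maps $(-)_K^F$ defined by \eqref{eq:FconjK}, one for each connected corolla $K$, assembles into a monoidal natural transformation. This requires two things: first, that each component is well defined (independent of the auxiliary choice of polarization $k$), and second, that the naturality square commutes for every morphism in $\conncrlla$. The well-definedness will follow from the rigidity of $F$, while naturality will be reduced, via Corollary \ref{cor:ConnCoroDecom}, to checking compatibility only against the generating morphisms of types (a) and (b) together with the trivial connected morphisms.

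First I would establish that the component $(-)_K^F$ does not depend on the choice of polarization $k$. Both $H_v^{\cb}$ and $H_{v'}^{\cb'}$ are limits of contractible groupoids, so a change of polarization $k \rightsquigarrow k'$ induces canonical isomorphisms $\widehat{h_v^{\cb}}(k) \cong \widehat{h_v^{\cb}}(k')$ and $\widehat{h_{v'}^{\cb'}}(k) \cong \widehat{h_{v'}^{\cb'}}(k')$ obtained by ``dragging legs around'' using units and counits of duals. The content of the rigidity hypothesis \eqref{eq:Frigid} --- that $F$-conjugation preserves the units and counits of duals, and satisfies $F(f^\vee)=(Ff)^\vee$ --- is exactly what is needed to guarantee that $(-)^F$ intertwines these two leg-dragging isomorphisms. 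Thus the square relating the two polarizations commutes, and $(-)_K^F$ is canonical. This is also where I would record that the naturality condition \eqref{eq:laxFnat} ensures $(-)^F$ is a well-defined linear map on each fixed $2$-hom space.

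Next I would verify naturality against the generators. The monoidality clause $(-)_{K_1\sqcup K_2}^F := (-)_{K_1}^F \otk (-)_{K_2}^F$ handles disjoint unions by definition, so by Corollary \ref{cor:ConnCoroDecom} it suffices to treat a single operadic composition (type (a)) and a single partial trace (type (b)), plus the trivial connected morphisms. For a type-(a) morphism, $\calb$ evaluates to gluing two coupons along a common edge colored by some $f$; the claim $\conncalb[\cb'](F_* G)\circ((-)^F \otk (-)^F) = (-)^F\circ \conncalb(G)$ amounts to showing that $F$-conjugating the vertical composite of two $2$-morphisms through a shared leg equals the composite of the two $F$-conjugates. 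This is precisely the \emph{separability} relation \eqref{eq:Fseparable}: the lax constraint $F^{(2)}$ on the upper coupon meets the oplax constraint $F_{(2)}$ on the lower coupon along the edge $f$, and separability collapses this $F_{(2)}\circ F^{(2)}$ to the identity, so the glued diagram conjugates correctly. For a type-(b) partial trace, the shared edge is closed into a loop, and the compatibility between conjugation and the cap/cup that forms the trace again reduces to rigidity together with separability applied along the loop edge. The trivial morphisms (isotopies and coloring rearrangements) are covered by polarization-independence already established.

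The main obstacle I anticipate lies in the type-(a) step, specifically in bookkeeping the trivial-$1$-morphism cases. The Remark following Definition \ref{def:Fconj} warns that in this context a $1$-morphism counts as trivial only if it is an identity \emph{and} not an edge color; consequently, when two coupons are glued along an edge whose color happens to be an identity $1$-morphism, one must use the non-trivial form \eqref{eq:F-conj} of conjugation (inserting genuine $F^{(2)}$ and $F_{(2)}$ constraints on the shared leg), not the unit-inserting form \eqref{eq:F-conj-1}. Getting the $F^{(0)}$/$F_{(0)}$ insertions to match up consistently on the legs that remain after composition --- while the separability relation is only stated for the ``bulk'' constraints --- is where the argument is most delicate. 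I expect the Frobenius relations \eqref{eq:Ffrob1}--\eqref{eq:Ffrob2} to enter precisely here, reassociating the constraints so that the composite of $F$-conjugates, with its interior $F_{(2)}\circ F^{(2)}$ removed by separability, matches the single $F$-conjugate of the glued diagram; this is ultimately the reason the hypotheses of the theorem are \emph{rigid Frobenius} rather than merely rigid.
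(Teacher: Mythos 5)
Your overall architecture coincides with the paper's: establish that the component \eqref{eq:FconjK} is independent of the choice of polarization, reduce naturality via Corollary \ref{cor:ConnCoroDecom} to the generating morphisms of types (a) and (b) plus trivial morphisms, and get monoidality by definition. The problems lie in which axioms you make do the work, and they are not cosmetic. First, polarization-independence does not follow from rigidity alone. Dragging a leg around composes the coupon with a duality unit/counit while the remaining legs are spectators; after rigidity is used to rewrite $\mathrm{ev}_{Fg}$ in terms of $F(\mathrm{ev}_g)$ and a lax constraint, that lax constraint abuts the oplax constraint of the conjugated coupon on an \emph{overlapping but different} horizontal composite, and one needs the Frobenius relation \eqref{eq:Ffrob1} to reassociate them --- this is exactly the marked step in the paper's computation \eqref{eq:square2rd2}. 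Rigidity only takes care of the cup/cap itself, so your step 1 is incomplete as stated.

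Second, and more seriously, your mechanism for type (a) is false. Separability asserts $F^{(2)}\circ F_{(2)}=\mathrm{id}$ (lax after oplax); the opposite composite $F_{(2)}\circ F^{(2)}$, which is what you propose to cancel, is only an idempotent --- it is precisely the idempotent \eqref{eq:RSFidem} (cf.\ \eqref{eq:uconjidem}) that the paper exhibits as the obstruction to preserving horizontal products. If your identity were true, the computation directly preceding \eqref{eq:RSFidem} would give $(\alpha\hocO\beta)^F=\alpha^F\hocO\beta^F$ for type-(c) morphisms as well, so conjugation would preserve all of $\crlla$ and the restriction to $\conncrlla$ (the entire point of the theorem) would be unnecessary. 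Moreover, in a single-edge gluing the lax and oplax constraints that meet along the glued edge carry different spectator legs, so a cancellation by separability is not even well-typed; the paper resolves these meeting constraints using both Frobenius relations \eqref{eq:Ffrob2}, \eqref{eq:Ffrob1} together with rigidity \eqref{eq:Frigid}, and separability never enters type (a). (It enters only in type (b), the partial trace, where your attribution agrees with the paper; note that this is also the only place where the hypothesis ``rigid Frobenius'' in the statement needs to be read as including separability.) A correct repair of your plan: prove polarization-independence using rigidity \emph{and} Frobenius; then, exploiting that independence, check the type-(a) square in the polarization in which the glued edge is the unique input of one coupon, where naturality and coassociativity of the oplax constraint alone close the square.
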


\begin{proof}
We first show that (\ref{eq:FconjK}) is well defined, i.e.\ does not depend on 
the choice of polarization. This amounts to showing that the $F$-conjugation commutes
with all canonical isomorphisms between 2-hom spaces that are related by duality. We
present the argument for one specific case -- its generalization to other cases is
straightforward: we claim that the diagram
  \be
  \begin{tikzcd}[column sep=2.7em,row sep=2.1em]
  {\mathrm{Hom}_{\mathcal{B}(a,a)}(\mathrm {id}_a,f\hocO g)} 
  & {\mathrm{Hom}_{\mathcal{B}(a,b)}(g^{\vee},f)} 
  \\
  {\mathrm{Hom}_{\mathcal{B}'(Fa,Fa)}(\mathrm {id}_{Fa},Ff\hocO Fg)}
  & {\mathrm{Hom}_{\mathcal{B}'(Fa,Fb)}((Fg)^{\vee},Ff)}
  \arrow["{(-)^F}"', from=1-1, to=2-1] \arrow["{(-)^F}", from=1-2, to=2-2]
  \arrow["\cong", from=1-1, to=1-2]    \arrow["\cong"', from=2-1, to=2-2]
  \end{tikzcd}
  \label{eq:square2}
  \ee
commutes for all objects $a,b\iN\cb$ and 1-morphisms $f\iN\cb(a,b)$ and $g\iN\cb(b,a)$
(note that by rigidity we have $F(g^{\vee}) \eq (Fg)^{\vee}$). To prove this claim we
pick a 2-morphism $\alpha\iN\hom_{\cb(a,a)}(\id_a,fg)$. Since the 1-morphisms $f$
and $g$ come from the coloring of edges, they are regarded as
non-trivial, and they will be treated as such by the $F$-conjugation. Now when first 
going right and then downwards in the square \eqref{eq:square2} we get
  \be
  \tikzfig{RSF17} \quad\longmapsto\quad \tikzfig{RSF18}
  \quad\longmapsto\quad \tikzfig{RSF19}
  \label{eq:square2rd}
  \ee
The graph on the right hand side of \eqref{eq:square2rd} can be rewritten as
  \be
  \begin{aligned}
  \tikzfig{RSF19a} \quad & =\quad\tikzfig{RSF20} \quad=\quad \tikzfig{RSF21}
  \\[0.5em]~\\
  & \equ{eq:Ffrob1} \quad \tikzfig{RSF22} \quad=\quad \tikzfig{RSF23}
  \end{aligned}
  \label{eq:square2rd2}
  \ee
The final graph in \eqref{eq:square2rd2} is the same as the one obtained when
tracing the diagram \eqref{eq:square2} first downwards and then right.
 \\[2pt]
Next we address naturality. The naturality squares for the trivial morphisms in 
$\conncrlla$ commute trivially. In view of Corollary \ref{cor:ConnCoroDecom} we
need to verify naturality only for morphisms of the types (a) and (b), i.e.\ we must
show that $F$-conjugation commutes with operadic compositions and partial trace maps.
First consider operadic compositions. To keep the discussion short we pick 
a specific morphism of type (a), say $G\colon K_1\,{\sqcup}\, K_2\To K_3$ given by 
  \be
  G ~=\quad \tikzfig{RSFOC}
  \label{eq:G=fgh}
  \ee
with $f,g,h\iN\cb(a,b)$. The naturality square for \eqref{eq:G=fgh} reads
  \be
  \begin{tikzcd}[column sep=5.9em,row sep=2.9em]
  {\GCal_{\mathcal B}^{\mathrm{conn}}(K_1) \Otk \GCal_{\mathcal B}^{\mathrm{conn}}(K_2)} 
  & {\GCal_{\mathcal B}^{\mathrm{conn}}(K_3)} 
  \\
  {\GCal_{\mathcal{B}'}^{\mathrm{conn}}(F_*K_1) \Otk 
  \GCal_{\mathcal{B}'}^{\mathrm{conn}}(F_*K_2)}
  & {\GCal_{\mathcal{B}'}^{\mathrm{conn}}(F_*K_3)}
  \arrow["{(-)^F_{K_1} \otk\, (-)^F_{K_2}}"', from=1-1, to=2-1]
  \arrow["{(-)^F_{K_3}}", from=1-2, to=2-2]
  \arrow["{\GCal_{\mathcal B}^{\mathrm{conn}}(G)}", from=1-1, to=1-2]
  \arrow["{\GCal_{\mathcal{B}'}^{\mathrm{conn}}(F_*G)}"', from=2-1, to=2-2]
  \end{tikzcd}
  \label{eq:natsquare:operadic}
  \ee
We choose a polarization for each corolla $K_i$, as well as elements
$\alpha\iN\widehat{h_{v_1}^{\cb}}(k_1) \eq \hom_{\cb(a,a)}(\id_a,fg^{\vee})$
and $\beta\iN\widehat{h_{v_2}^{\cb}}(k_2) \eq \hom_{\cb(a,a)}(\id_{a},gh)$ in the
respective 2-hom spaces for $K_1$ and $K_2$. The following calculation shows that
the two paths in the square \eqref{eq:natsquare:operadic} give 
the same element in $\widehat{h_{v_3}^{\cb}}(k_{3}) \eq \hom_{\cb(a,a)}(\id_a,fh)$:
  \be
  \hspace*{-0.6em}
  \begin{aligned}
  (\alpha \,{\hoco_g}\,\beta)^F~ & =\quad \tikzfig{RSF24} \quad=\quad \tikzfig{RSF25}
  \\~\\[-2pt]
  & = \quad \tikzfig{RSF26} \quad \equ{eq:Ffrob2} \quad \tikzfig{RSF27}
  \\~\\[-2pt]
  & \!\!\! \equ{eq:Ffrob1} ~~ \tikzfig{RSF28} \quad \equ{eq:Frigid} \quad \tikzfig{RSF29}
  \quad=~ \alpha^F {\hoco_{Fg}^{}}\,\beta^F .
  \end{aligned}
  \ee
(Here $(-)^F$ denotes $F$-conjugation (see Definition \ref{def:Fconj}), while
$\hoco_g$ stands for horizontal composition along the 1-morphism $g$.) 
 \\[2pt]
Next consider partial trace maps. Let $G\colon K_1\To K_2$
be the morphism of type (b) in $\conncrlla$ given by 
  \be
  G ~=\quad\tikzfig{RSFPT}
  \ee
with $f\iN\cb(a,a)$ and $g\iN\cb(b,a)$, for which the naturality square is
  \be
  \begin{tikzcd}[column sep=5.7em,row sep=2.6em]
  {\GCal_{\mathcal B}^{\mathrm{conn}}(K_1)}
  & {\GCal_{\mathcal B}^{\mathrm{conn}}(K_2)}
  \\
  {\GCal_{\mathcal B'}^{\mathrm{conn}}(F_*K_1)} 
  & {\GCal_{\mathcal B'}^{\mathrm{conn}}(F_*K_2)}
  \arrow["{(-)^F_{K_1}}"', from=1-1, to=2-1]
  \arrow["{(-)^F_{K_2}}", from=1-2, to=2-2]
  \arrow["{\GCal_{\mathcal B}^{\mathrm{conn}}(G)}", from=1-1, to=1-2]
  \arrow["{\GCal_{\mathcal B'}^{\mathrm{conn}}(F_*G)}"', from=2-1, to=2-2]
  \end{tikzcd}
  \ee
We select polarizations $k_1$ and $k_2$ for the corollas $K_1$ and $K_2$ and choose a
2-morphism $\alpha\iN\widehat{h_{v_1}^{\cb}}(k_1) %\,
      $\linebreak[0]$
    {=}\, \hom_{\cb(a,a)}(\id_a,fg^{\vee}g)$,
and we consider the case that $\widehat{h_{v_2}^{\cb}}(k_2) \eq \hom_{\cb(a,a)}(\id_a,f)$. 
Then the following calculation shows that taking the partial trace commutes with the 
$F$-conjugation:
  \be
  \begin{aligned}
  (\tr_g\,\alpha)^{F}~ & = \quad \tikzfig{RSF30} \quad=\quad \tikzfig{RSF31}
  ~~ \equ{eq:Fseparable} ~~ \tikzfig{RSF32}
  \\~\\[-2pt]
  & =\quad \tikzfig{RSF33} ~\quad=~ \tikzfig{RSF34} \quad=~ \tr_{Fg}\,\alpha^F .
  \end{aligned}
  \ee
In conclusion, we did construct a natural transformation 
$(-)^{F}\colon\conncalb\Rightarro\pconncalb\cir F_{*}$. Moreover, this natural
transformation is evidently monoidal.
\end{proof}

It is worth stressing that a rigid separable Frobenius functor does \emph{not}, in 
general, preserve the \emph{entire} graphical calculus on disks. To understand 
what goes wrong, consider a morphism $G\colon K_1\,{\sqcup}\, K_2\To K_3$
of horizontal-product type (c), say
  \be
  G ~=\quad \tikzfig{RSFHP}
  \ee
with $f_1,f_2\iN\cb(a,b)$ and $g_1,g_2\iN\cb(b,c)$. Upon choosing appropriate 
polarizations and 2-morphisms, $G$ yields a horizontal product
  \be
  \tikzfig{RSF35}
  \ee
The $F$-conjugate of this horizontal product is
  \be
  (\alpha\hocO\beta)^F ~=\quad \tikzfig{RSF36} \quad=\quad \tikzfig{RSF37}
  \ee
Thus the so obtained 2-morphism differs from 
$\alpha^F{\hoco}\,\beta^F \eq F\alpha\hocO F\beta$ by
  \be
  \tikzfig{RSF38}
  \label{eq:RSFidem}
  \ee
which owing to the separability of $F$ is an idempotent 
on the space of 2-endomorphism of $F f_2 \hocO F g_2$. This observation motivates

\begin{defn}  
A functor $F\colon\cb\To\cb'$ equipped with both lax and oplax structures for which
the lax functoriality constraint is the inverse of its oplax functoriality constraint
is called \emph{strongly separable}. 
\end{defn}  

{}From the previous observations it follows immediately that a \emph{rigid strongly
separable Frobenius functor} preserves \emph{horizontal products} as well as 
\emph{whiskerings}, and thus preserves the complete graphical calculus on disks of its
domain bicategory. However, there is in fact no need for a separate notion of 
a strongly separable Frobenius functor. Recall that a \emph{pseudofunctor} can be 
regarded as a functor with lax and oplax structures that are mutually inverse. We have

\begin{lem}
For $F$ a functor between bicategories that is equipped with lax and oplax structures,
we have:
 \Itemize
 \item [{\rm(i)}] 
$F$ is strongly separable if and only if it is a pseudofunctor.
 \item [{\rm(ii)}]
If $F$ is a pseudofunctor, then it is Frobenius.
 \end{itemize}
\end{lem}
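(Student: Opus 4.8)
We may treat $\cb$ and $\cb'$ as strict $2$-categories throughout, as is done elsewhere in the paper via the strictification theorem; in particular all unitors are identities. Statement (i) in the direction ``$\Leftarrow$'' is immediate from the definitions: if $F$ is a pseudofunctor then its lax and oplax structures are mutually inverse, so in particular $F^{(2)}$ and $F_{(2)}$ are, which is exactly strong separability. The whole content of (i) is therefore the converse, and my plan is to show that strong separability forces the unit constraints $F^{(0)}_a$ and $F_{(0)}_a$ to be mutually inverse for every object $a$; together with $F^{(2)}=(F_{(2)})^{-1}$ this says precisely that $F$ is a pseudofunctor.

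Fix $a\iN\cb$ and abbreviate $A\coloneqq F\id_a$, $\iota\coloneqq F^{(0)}_a\colon\id_{Fa}\Rightarrow A$, $\pi\coloneqq F_{(0)}_a\colon A\Rightarrow\id_{Fa}$, and $m\coloneqq F^{(2)}_{\id_a,\id_a}\colon A\hocO A\Rightarrow A$; by strong separability $m$ is invertible with $m^{-1}=F_{(2)}_{\id_a,\id_a}$. Written in the strict setting, the lax and oplax unit axioms read $\iota\hocO\id_A=m^{-1}=\id_A\hocO\iota$ and $\pi\hocO\id_A=m=\id_A\hocO\pi$. The first key step is then short: the horizontal composite $\iota\hocO\pi$, formed in $\cb'(Fa,Fa)$ (where all four $1$-morphisms involved are endo-$1$-morphisms of $Fa$), can be evaluated by interchange in two ways. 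Decomposing it with unit whiskerings, which are trivial, gives the vertical composite $\iota\circ\pi$; decomposing it the other way gives $(\id_A\hocO\pi)\circ(\iota\hocO\id_A)$. Hence
\[
  \iota\circ\pi=(\id_A\hocO\pi)\circ(\iota\hocO\id_A)=m\circ m^{-1}=\id_A ,
\]
so $\pi$ is a section of $\iota$.

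The reverse composite is where the one genuine obstacle lies: $\pi\circ\iota$ is a $2$-\emph{endomorphism of the unit} and, in contrast to $\iota\circ\pi$, it cannot be produced by any interchange manipulation of $\iota$ and $\pi$. To handle it I would invoke the standing assumption that $\eend_{\cb'}(\id_{Fa})$ is isomorphic as an algebra to $\fk$: thus $\zeta\coloneqq\pi\circ\iota=\lambda\,\id_{\id_{Fa}}$ for a scalar $\lambda\iN\fk$. Whiskering with $A$ and using the displayed unit identities once more yields $\zeta\hocO\id_A=(\pi\hocO\id_A)\circ(\iota\hocO\id_A)=m\circ m^{-1}=\id_A$, whereas directly $\zeta\hocO\id_A=\lambda\,\id_A$. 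Therefore $\lambda\,\id_A=\id_A$, and since $A=F\id_a$ is nonzero (the degenerate case $A\cong0$ being excluded, and automatic for the functors of interest) this forces $\lambda=1$, i.e.\ $\pi\circ\iota=\id_{\id_{Fa}}$. Hence $\iota$ and $\pi$ are mutually inverse, which completes (i). It is exactly this hypothesis $\eend(\id)\cong\fk$ — forcing an idempotent scalar to equal $1$ — that repairs the asymmetry between the two composites; note that invertibility of $F^{(2)}$ alone does \emph{not} make $F^{(0)}$ invertible, as the example of a monad with invertible multiplication shows, so the oplax counit $\pi$ is used essentially.

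For (ii), write $\mu=F^{(2)}$ and $\delta=F_{(2)}=\mu^{-1}$ (invertible since $F$ is a pseudofunctor). Each of the Frobenius relations \eqref{eq:Ffrob1} and \eqref{eq:Ffrob2}, stated for a composable triple $f,g,h$, equates two composites each built from one copy of $\mu$ and one copy of $\delta$. My plan is to start from the side on which $\delta$ is applied last, rewrite the constraint $\mu$ occurring there by means of the lax associativity \eqref{eq:laxFass}, and then cancel the adjacent pair $\mu\circ\delta=\id$ that this produces; what survives is precisely the other side. Concretely, \eqref{eq:laxFass} gives
\[
  \mu_{fg,h}\circ(\mu_{f,g}\hocO\id_{Fh})=\mu_{f,gh}\circ(\id_{Ff}\hocO\mu_{g,h}),
\]
and solving this for $\mu_{f,gh}$, substituting, and cancelling $\mu_{fg,h}\circ\delta_{fg,h}=\id$ collapses one side of \eqref{eq:Ffrob1} onto the other; the mirror-image computation, using the opposite bracketing in \eqref{eq:laxFass}, yields \eqref{eq:Ffrob2}. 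Thus every pseudofunctor is Frobenius. I expect (ii) to be entirely routine once the bookkeeping of indices is fixed; the only subtle point in the whole lemma is the scalar argument in part (i).
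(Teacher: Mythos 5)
Your proposal is correct and takes essentially the same route as the paper: the forward direction of (i) is definitional; the converse establishes $F^{(0)}_a\circ F_{(0)\,a}=\id$ from the unit axioms together with the invertibility of $F^{(2)}_{\id_a,\id_a}$, and $F_{(0)\,a}\circ F^{(0)}_a=\id$ by a whiskering/cancellation argument requiring a non-degeneracy hypothesis; and (ii) is lax associativity plus cancellation of the mutually inverse functoriality constraints. The only cosmetic difference is that the paper whiskers with $Ff$ for a 1-morphism $f$ with non-zero 2-endomorphism space (adding a parenthetical remark for the degenerate case), whereas you whisker with $F\id_a$ itself and invoke $\eend_{\cb'}(\id_{Fa})\cong\fk$ together with $\id_{F\id_a}\neq 0$ -- the same caveat in an equivalent form.
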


\begin{proof}
(i) A pseudofunctor is strongly separable by definition. To see the converse, assume 
that $F$ is strongly separable. Then what remains to be shown is that $F^{(0)}$ and 
$F_{(0)}$ are mutually inverse. Now for any object $a\iN\cb$ we have 
  \be
  \tikzfig{RSF39} \quad=\quad \tikzfig{RSF40} \quad=\quad \tikzfig{RSF41}
  \quad=\quad \tikzfig{RSF42}
  \ee
where the premise that $F$ is strongly separable is used in the second equality,
showing that $F_{a}^{(0)}\cir F_{(0)\,a} \eq \id_{Fa}$. 
Moreover, for a 1-morphism $f\iN\cb(a,b)$ with non-zero 2-endomorphism space we have
  \be
  \tikzfig{RSF43} \quad=\quad \tikzfig{RSF44} \quad=\quad \tikzfig{RSF45}
  \ee
so that also $F_{(0)\,a}\cir F_{a}^{(0)} \eq \id_{Fa}$.
(If there does not exist such a 1-morphism $f$ for any $b\iN\cb$, then the desired 
statement holds true trivially.)
 \\[3pt]
(ii) We have
  \be
  \tikzfig{RSF15a} \quad=\quad \tikzfig{RSF46}
  ~~=~~ \tikzfig{RSF47} ~\,=~ \tikzfig{RSF16a} \qquad
  \ee
where associativity is used in the second equality, while strong separability
(which holds by part (i)) is used in the first and third.
Thus \eqref{eq:Ffrob2} is satisfied. The proof of \eqref{eq:Ffrob1} is analogous.
\end{proof}

In particular, a strongly separable Frobenius functor is just the same as a 
pseudofunctor. Now recall the notion of $F$-conjugation; when restricting to the
stronger class of functors consisting of rigid pseudofunctors, in Theorem 
\ref{thm:RSFconj} we can drop the restriction on the allowed corollas, so that it
is sharpened to

\begin{cor} \label{cor:RPconj}
Let $F\colon\cb\To\cb'$ be a rigid pseudofunctor between strictly pivotal 
bicategories. The $F$-conjugation $($defined in Theorem $\ref{thm:RSFconj})$
canonically extends to a monoidal natural transformation
  \be
  \begin{tikzcd}
  \crlla
  \\
  && \mathrm{Vect}_{\mathbbm{k}}
  \\
  \pcrlla
  \arrow["{F_*}"', from=1-1, to=3-1]
  \arrow[""{name=0,anchor=center,inner sep=0}, "\!\!\!\GCal_{\mathcal{B}}", from=1-1, to=2-3]
  \arrow["\!\GCal_{\mathcal{B}'}"', from=3-1, to=2-3]
  \arrow["{(-)^F}"{pos=0.4},shift right=2,shorten <=12pt,shorten >=12pt,Rightarrow, from=0, to=3-1] 
  \end{tikzcd}
  \ee
\end{cor}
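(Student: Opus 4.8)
The plan is to reduce the verification to the elementary morphism types of Proposition \ref{prop:CoroDecom} and to observe that the only place where the argument of Theorem \ref{thm:RSFconj} broke down — the horizontal products and whiskerings — is repaired precisely by the pseudofunctoriality hypothesis. First I would record that a rigid pseudofunctor is in particular a rigid Frobenius functor: by part (ii) of the preceding Lemma every pseudofunctor is Frobenius, and by part (i) it is strongly separable, so that its lax and oplax functoriality constraints $F^{(2)}$ and $F_{(2)}$ are mutually inverse. Consequently Theorem \ref{thm:RSFconj} already applies and supplies a monoidal natural transformation whose component at each corolla $K$ is the map \eqref{eq:FconjK}. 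Since $\crlla$ and $\conncrlla$ have the same objects, the very same components \eqref{eq:FconjK} (extended to disjoint unions by $\Otk$) serve as the candidate components on all of $\crlla$; what must be added is the verification of the naturality squares for those morphisms of $\crlla$ that are not connected.

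By Proposition \ref{prop:CoroDecom} every non-trivial morphism of $\crlla$ is a finite disjoint union of partial composites of the four elementary types (a)--(d) of \eqref{eq:abcd}. Naturality for the operadic compositions (a), the partial traces (b), and the trivial morphisms was already established in Theorem \ref{thm:RSFconj}; monoidality of the candidate transformation disposes of disjoint unions, and the functoriality of $\calb$ and $\pcalb$, together with the fact that $F_*$ is a symmetric monoidal functor commuting with partial composition, ensures that a naturality square holding for $G_1$ and for $G_2$ also holds for the partial composite $G_2\pcirc G_1$. Hence it suffices to treat the two remaining elementary types, the horizontal products (c) and the whiskerings (d).

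For these two types the work has essentially been done in the discussion preceding the corollary: for a morphism $G\colon K_1\sqcup K_2\To K_3$ of horizontal-product type, the two legs of the naturality square were shown to differ exactly by the endomorphism \eqref{eq:RSFidem}, which is a composite of $F^{(2)}$ with $F_{(2)}$. Under strong separability this composite is the identity, so the square commutes; the whiskering case is entirely analogous, the relevant obstruction again being a functoriality constraint composed with its inverse. This settles naturality for all four elementary types, hence for every morphism of $\crlla$, and monoidality is immediate from the componentwise definition, giving the asserted monoidal natural transformation $(-)^F\colon\calb\Rightarrow\pcalb\cir F_*$. I expect the only genuine bookkeeping — and the step most worth spelling out — to be the claim that naturality passes from the elementary generators to arbitrary partial composites, since it is this reduction to the list \eqref{eq:abcd} that makes the whole argument legitimate.
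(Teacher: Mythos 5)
Your proposal is correct and follows essentially the same route as the paper: the paper likewise obtains the components from Theorem \ref{thm:RSFconj} (after noting, via the Lemma, that a rigid pseudofunctor is in particular rigid separable Frobenius), and then uses the observation that the obstruction \eqref{eq:RSFidem} for horizontal products and whiskerings collapses to the identity under strong separability, which by part (i) of the Lemma is exactly pseudofunctoriality. The only difference is presentational: you make explicit the bookkeeping step that naturality propagates from the generators of Proposition \ref{prop:CoroDecom} to arbitrary disjoint unions of partial composites, which the paper leaves implicit.
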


We end this section with an example of a rigid separable Frobenius functor
that plays an important role in the application to conformal quantum field theories
that we will discuss in Section \ref{sec:SNC}.

\begin{example} \label{exa:UFrCtoBC}
Let $\cc$ be a strictly pivotal fusion category and $\cfrc$ the strictly pivotal
bicategory of simple special symmetric Frobenius algebras in $\cc$ (see Example
\ref{exa:cfrc}). Recall that $\cc$ can be viewed as a strictly pivotal bicategory $\cbc$
(its delooping, see Example \ref{exa:bc}) with a single object $*$. Consider the functor
  \be
  \cu \Colon \cfrc\rarr~\cbc
  \label{eq:UFrCtoBC}
  \ee
that is defined by 
  \be
  \begin{tikzcd}
  A &&&& {*} 
  \\ 
  & {} & {} & {} \\
  B &&&& {*}
  \arrow["{\mathcal{U}}", shorten <=14pt, shorten >=14pt, maps to, from=2-2, to=2-4]
  \arrow[""{name=0, anchor=center, inner sep=0}, "X"', curve={height=12pt},
            start anchor={south west}, end anchor={north west}, from=1-1, to=3-1]
  \arrow[""{name=1, anchor=center, inner sep=0}, "Y", curve={height=-12pt},
            start anchor={south east}, end anchor={north east}, from=1-1, to=3-1]
  \arrow[""{name=2, anchor=center, inner sep=0}, "{\dot{X}}"', curve={height=12pt},
            start anchor={south west}, end anchor={north west}, from=1-5, to=3-5]
  \arrow[""{name=3, anchor=center, inner sep=0}, "{\dot{Y}}", curve={height=-12pt},
           start anchor={south east}, end anchor={north east}, from=1-5, to=3-5]
  \arrow["\alpha" {yshift=2pt}, shorten <=7pt, shorten >=7pt, Rightarrow, from=0, to=1]
  \arrow["{\dot{\alpha}}" {yshift=2pt},shorten <=7pt,shorten >=7pt,Rightarrow, from=2, to=3] 
  \end{tikzcd}
  \ee
for Frobenius algebras $A,B\iN\cfrc$, $A$-$B$-bimodules $X,Y$ and a bimodule morphism 
$\alpha\colon X\Rightarro Y$, i.e.\ $\cu$ sends every object in $\cfrc$ to the sole 
object $*$ in $\cbc$ and every bimodule and bimodule morphism to their underlying 
object and morphism in $\cc$, respectively. In the sequel, we suppress the dot
and just use the same symbol for bimodules and bimodule morphisms as for
their underlying objects and morphisms.
 \\[2pt]
The functor $\cu$ is canonically a rigid separable Frobenius functor,
with the components 
  \be
  \cu_{X,Y}^{(2)}\Colon X\oti Y\rarr~ X\Otb Y \qquad\text{and}\qquad
  \cu_{(2)\,X,Y}\Colon X\Otb Y\rarr~ X\oti Y
  \ee
of the lax and oplax functoriality constraints at $(X,Y)\iN\cfrc(A,B)\Times\cfrc(B,C)$
given by the splitting of the idempotent that realizes the tensor product $\otimes_B$ as
a retract of $\otimes$, i.e.\ $\cu_{X,Y}^{(2)} \cir \cu_{(2)\,X,Y} \eq \id_{X\otb Y}$ and
  \be
  \cu_{(2)\,X,Y}\circ\cu_{X,Y}^{(2)}~
  =\quad \tikzfig{RSF48} \quad=\quad \tikzfig{RSF49}
  \label{eq:uconjidem}
  \ee
and with the lax and oplax unity constraints given by the units and counits
of the Frobenius algebras. Note that to satisfy the axioms, 
associators and unitors must be inserted accordingly. 
Also, for the example to be non-trivial, a crucial requirement is that
special symmetric Frobenius algebras other than the monoidal unit exist in $\cc$.
\end{example}

%%%%%%%%%%%%%%%%%%%%%%%%%%%%%%%%%%%%%%%%%%%%%%%%%%%%%%%%%%%%%%%%%%%%%%%%

\section{String-net models based on pivotal bicategories} \label{sec:SNbicat}

In Section \ref{chap:GCalc} we have developed the graphical calculus on a canvas 
that is homeomorphic to a disk. It is natural to seek an extension of this
calculus for which the canvas can have non-trivial topology. String-net models 
for surfaces provide such an extension.
Throughout this section $\cb$ is a strictly pivotal bicategory. In addition we assume
that $\cb$ is small and locally small, i.e.\ its objects
form a set and all its hom-categories are small.

\Subsection{Bicategorical string-net spaces} \label{sec:BSNspaces}

Recall from Definition \ref{def:bdydatum} the notion of a $\cb$-boundary datum
on a compact oriented 1-manifold. Let $\surf$ be a compact oriented surface
and let $\msb$ be a $\cb$-boundary datum on $\partial\surf$.
A (fully) \emph{$\cb$-colored graph} $\grph$ on $\surf$ with $\cb$-boundary datum 
$\msb$ on $\partial\surf$ is a partially $\cb$-colored graph $\ogrph$ on $\surf$
together with a coloring of its internal vertices, i.e.\ a choice of an element in
the vector space $H_{v}^{\cb}$ for each internal vertex $v\iN V(\ogrph)$, such that
the canonical embedding $\partial\surf\,{\hookrightarrow}\,\surf$, when viewed as
an outgoing parametrization of the boundary, pulls back the $\cb$-boundary datum 
$\msb$ on $\partial\surf$. 

Denote by $\mathrm{G}(\surf,\msb)$ the set of all $\cb$-colored graphs on $\surf$
with prescribed $\cb$-boundary datum $\msb$, and by $\fk\mathrm{G}(\surf,\msb)$
the $\fk$-vector space freely generated by it. Also recall from Definition
\ref{def:value} the notion of the \emph{value} of a $\cb$-colored embedded graph on 
the standard disk. We define the 
\emph{string-net space} assigned to the pair $(\surf,\msb)$ as follows:

\begin{defn} \label{def:osnb}
Let $\cb$ be a strictly pivotal bicategory, $\surf$ a compact
oriented surface, and $\msb$ a $\cb$-boundary datum on $\partial\surf$. 
 \begin{itemize}
 \item [{\rm(i)}] 
A \emph{null graph} on $\surf$ is an element $\sum_i\lambda_i\grph_i$ of 
$\fk\mathrm{G}(\surf,\msb)$ such that there exists an embedding
$\varphi\colon D \,{\hookrightarrow}\, \mathrm{int}(\surf)$ of the standard disk $D$
to the interior of $\surf$ that satisfies the following requirements:
the circle $\varphi(\partial D)$ does not
contain any vertex of any of the graphs $\grph_{i}$; any intersection of
$\varphi(\partial D)$ and an edge of any of the graphs $\grph_{i}$ is transversal;
on the complement $\surf\,{\setminus}\,\varphi(D)$ all graphs $\grph_{i}$ coincide;
and the values of the graphs pulled back by $\varphi$ sum up to zero, 
$\sum_i\lambda_i\, \langle\grph_i\cap \varphi(D)\rangle_D \eq 0$.
 \item [{\rm(ii)}] 
The (bare) \emph{string-net space} $\osnb(\surf,\msb)$ is the quotient 
  \be
  \osnb(\surf,\msb) \coloneqq \fk\mathrm{G}(\surf,\msb)/\mathrm{N}(\surf,\msb) \,,
  \label{eq:osnb}
  \ee
where $\mathrm{N}(\surf,\msb)$ is the subspace of $\fk\mathrm{G}(\surf,\msb)$
spanned by all null graphs on $\surf$.
 \end{itemize}
\end{defn}

We call a vector in the quotient space $\osnb(\surf,\msb)$ that
is the image of an element of the generating set $\mathrm{G}(\surf,\msb)$
of $\fk\mathrm{G}(\surf,\msb)$ a \emph{bare string net}, or also just
a \emph{string net}. The qualification ``bare'' and the notation $\osnb$ 
are chosen because later on we will introduce a Karoubified version $\snb$
of string-net spaces for which the boundary data are
enriched by certain idempotents. A string net that has a $\cb$-colored graph
$\grph$ as a representative will be denoted by $[\grph]$. By abuse of language the
term \emph{string net} is also used when referring to an individual graph that
represents an element $[\grph]\iN\osnb(\surf,\msb)$. The string-net space
$\osnb(\surf,\msb)$ is linear in the color of each vertex of a graph  $\grph$ and
additive with respect to taking direct sums of objects labeling an edge of $\grph$.
By the nature of the graphical calculus for disks, isotopic graphs
represent the same string net. Furthermore, all identities that are valid in
the graphical calculus for $\cb$ also hold inside any disk embedded
in $\surf$. In other words, for string nets the graphical calculus for $\cb$
applies locally on $\surf$.

Homeomorphisms of the surface $\surf$ act naturally on embedded graphs. 
Isotopies can be localized on disks \Cite{Cor.\,1.3}{edKi}; as a consequence,
isotopic graphs are identified in the string-net space, and hence 
the action of homeomorphisms descends to an action of the mapping
class group $\mcg(\surf)$ of the surface on the space $\osnb(\surf,\msb)$. 
Moreover, string nets with matching boundary data can be \emph{concatenated}.

\begin{example}
Let $a,b,c\iN\cb$ be three objects in a pivotal bicategory (in the color version of 
the picture \ref{eq:pic:pop} below, they are indicated as green, blue, and purple,
respectively), and let $f_{1},f_{5}\iN\cb(b,c)$, $f_{2},f_{4}\iN\cb(c,b)$, 
$f_{3}\iN\cb(b,b)$ and $f_{6}\iN\cb(a,c)$ be 1-morphisms in $\cb$. Indicate by
  \be
  \tikzfig{SNB0}
  \ee
a $\cb$-boundary datum $\msb$ on the boundary of a genus-1
surface $\surf$ with three boundary circles. Then
  \be
  \grph ~= \quad\tikzfig{SNB1}
  \label{eq:pic:pop}
  \ee
is an element of the set $\mathsf{G}(\surf,\msb)$ of $\cb$-colored
graphs on $\surf$, where $\clc_{1}$ and $\clc_{2}$
are elements of the appropriate spaces of vertex colors. $\grph$
represents a string net $[\grph]\iN\osnb(\surf,\msb)$.
\end{example}

\begin{rem}
Recall that a color $\clc\iN H_{v}^{\cb}$ for an internal vertex $v$
of a partially $\cb$-colored graph $\ogrph$ is completely determined
by a choice of 2-morphism $\clc_k\iN\widehat{h_{v}^{\cb}}(e_k)$ for
any choice of polarization on $v$, and the 2-morphisms for different
choices of polarization are related by coherent isomorphisms. Since
these coherent isomorphisms are devised in such a way that the string
diagrams produced according to different choices of polarizations have 
the same value when restricted to embedded disks, it is equally admissible,
if not more convenient for calculations, to represent string nets by
string diagrams with rectangular coupons.
\end{rem}

\begin{example} \label{exa:osnbd}
Let $\msb$ be a $\cb$-boundary datum on $S^{1} \eq \partial D$. Recall that 
according to the prescription \eqref{eq:kb} there is a unique partially colored 
corolla $K^{\msb}$ associated to $\msb$. We have a canonical isomorphism 
  \be
  \osnb(D,\msb)\iso\calb(K^{\msb})
  \ee
that is given by the evaluation $\grph \,{\xmapsto{\,~\,}}\, 
\clc_{\grph}^{} \,{\xmapsto{\,\hcalb(\ogrph)\,}}\, \langle\grph\rangle$,
where $\grph$ is any graph representing the string net $[\grph] \iN \osnb(D,\msb)$, 
and where $\clc_{\grph}^{} \,{\in}\, \bigotimes_{v\in V(\ogrph)}H_v^{\cb}$
stands for the coloring of the internal vertex of the underlying partially 
colored graph $\ogrph$ that is given by $\grph$. The linear map obtained by this 
prescription is well defined 
because the graphical calculus is \emph{local} in nature; it is injective 
since graphs that evaluate on $D$ to the same value also must have the same 
value when evaluated on a slightly smaller disk embedded in $D$ (after being 
replaced by isotopic graphs when necessary) and hence represent the same string net;
and it is also surjective because, given any element $\clc\iN\calb(K^{\msb})$,
coloring the center of $K^{\msb}$ with $\clc$ produces a fully
colored corolla $K_\clc^{\msb}$ whose value is $\clc$ itself.
\end{example}

\begin{rem}
\Enumerate
 \item 
The idea to utilize pivotal bicategories for the construction of modular functors,
and even of topological field theories, dates back at least to \cite{moWa}.
 \item 
When the input bicategory $\cb$ is the delooping of a spherical fusion category
$\cc$, Definition \ref{def:osnb} reduces to the definition of string-net spaces
for $\cc$ that was used in e.g.\ \cite{leWe,kirI24,fusY,bart8}. We then write 
  \be
  \osnc(\surf,\msb)\equiv\mathrm{SN}_{\cbc}^{\circ}(\surf,\msb) \,.
  \ee
It should be appreciated that our definition of a string-net space 
$\mathrm{SN}_{\cb}^{\circ}$ for a general strictly pivotal bicategory $\cb$
does not require any homological properties such as semisimplicity or finiteness.
Also note that, while sphericality is needed for the string-net modular functor based
on $\cc$ to be isomorphic to the Turaev-Viro modular functor, it is 
neither required for the string-net construction itself \cite{runk15},
nor for other constructions of modular functors \Cite{Sect.\,6.4}{brWo}.
\end{enumerate}
\end{rem}

%%%%%%%%%%%%%%%%%%%%%%%%%%%%%%%%%%%%%%%%%%%%%%%%%%%%%%%%%%%%%%%%%%%%%%%%

\Subsection{String-net spaces as colimits} \label{sec:SN-colimit}

As we will see now, the string-net spaces, which were defined as quotients 
of vector spaces, admit a natural description as colimits.
Such a description will e.g.\ be instrumental when trying to generalize
string-net constructions to a derived setting.

\begin{defn}
Let $\cb$ be a strictly pivotal bicategory, $\surf$ a compact oriented
surface, and $\msb$ a $\cb$-boundary datum on $\partial\surf$. 
 \begin{itemize}
 \item [{\rm(i)}] 
$\grb(\surf,\msb)$ is the following category: The objects of $\grb(\surf,\msb)$
are partially $\cb$-colored graphs on $\surf$ that have $\msb$ as their boundary
datum. The morphisms of $\grb(\surf,\msb)$ are generated under composition (and
upon adjoining identities) by the morphisms exemplified in the following example:
  \be
  \tikzfig{SNCL0} \quad\stackrel{\tikzfig{SNCL1}}{\xrightarrow{\hspace*{4.7cm}}}
  \quad\tikzfig{SNCL2} \quad
  \label{eq:grphbmor}
  \ee
In more detail, a generating morphism is given by an embedding $\varphi$ of the 
standard disk $D$ into the interior of $\surf$, such that $\varphi(\partial D)$ 
intersects the edges of the partially colored graph $\ogrph_{1}$ in the domain 
transversally and does not meet any vertices, while the codomain is the graph 
$\ogrph_2$ that is obtained by replacing $\ogrph_1\,{\cap}\,\varphi(D)$ with the 
image of the unique partially colored corolla on $D$ associated with the boundary 
datum on $S^{1} \eq \partial D$ pulled back by $\varphi$.
 \item [{\rm(ii)}] 
The \emph{evaluation functor}
  \be
  \ce_{\cb}^{\surf,\msb} \Colon \grb(\surf,\msb) \rarr~ \vct
  \label{eq:evafunctor}
  \ee
is the following functor: $\ce_{\cb}^{\surf,\msb}$ sends an object, i.e.\ a 
partially $\cb$-colored graph $\ogrph$ on $\surf$, to the vector space 
$\ce_{\cb}^{\surf,\msb}(\ogrph) \,{\coloneqq}\, \bigotimes_{v\in V(\ogrph)} 
H_v^{\cb}$; and it sends a generating morphism $\gamma\colon\ogrph_1\To\ogrph_2$,
with associated embedding $\varphi\colon D \,{\hookrightarrow}\, \surf$,
to the linear map 
  \be
  \ce_{\cb}^{\surf,\msb}(\gamma)\colon~~ \bigotimes_{v\in V(\ogrph_{1})}\!\!\!
   H_v^{\cb} \xrightarrow{~\id\,\otk\,\hcalb(\ogrph_1\cap D_\gamma)~}\!\!
   \bigotimes_{v'\in V(\ogrph_2)}\!\!\!\! H_{v'}^{\cb}
  \ee
that is obtained by applying the graphical calculus on disks to the partially
colored graph on $D$ pulled back by the embedding $\varphi$.
\end{itemize}
\end{defn}

\begin{thm} \label{thm:SN-colim}
Let $\cb$ be a strictly pivotal bicategory, $\surf$ a compact oriented
surface, and $\msb$ a $\cb$-boundary datum on $\partial\surf$. The string-net space
$\osnb(\surf,\msb)$ satisfies
  \be
  \osnb(\surf,\msb) = \colim\ce_{\cb}^{\surf,\msb} ,
  \ee
where the legs of the cocone are given by 
  \be
  \begin{aligned}
  \ce_{\cb}^{\surf,\msb}(\ogrph) = \bigotimes_{v\in V(\ogrph)}\!\! H_v^{\cb}
  & \rarr~\, \osnb(\surf,\msb) \,,
  \\
  \clc = \!\bigotimes_{v\in V(\ogrph)}\!\! \clc_v & \xmapsto{~~~} [\ogrph_{\clc}]
  \end{aligned}
  \label{eq:coconeosnb}
  \ee
for every $\ogrph\iN\grb(\surf,\msb)$. Here $\ogrph_{\clc}$ is the fully colored graph 
that is obtained by coloring $\ogrph$ with $\clc\iN\bigotimes_{v\in V(\ogrph)}H_{v}^{\cb}$.
\end{thm}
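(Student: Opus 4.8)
The plan is to establish the universal property of the colimit by hand: I would exhibit the family \eqref{eq:coconeosnb} as a cocone, invoke the universal property to obtain a comparison map $\Phi\colon\colim\ce_{\cb}^{\surf,\msb}\To\osnb(\surf,\msb)$, and then construct an explicit two-sided inverse. Throughout, write $\iota_\ogrph\colon\ce_{\cb}^{\surf,\msb}(\ogrph)\To\colim\ce_{\cb}^{\surf,\msb}$ for the structure maps of the colimit and $\Phi_\ogrph\colon\clc\mapsto[\ogrph_\clc]$ for the legs appearing in \eqref{eq:coconeosnb}.

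First I would check that $(\Phi_\ogrph)_\ogrph$ is a cocone, i.e.\ that $\Phi_{\ogrph_2}\cir\ce_{\cb}^{\surf,\msb}(\gamma) \eq \Phi_{\ogrph_1}$ for every generating morphism $\gamma\colon\ogrph_1\To\ogrph_2$ with associated disk embedding $\varphi$. By construction $\ogrph_2$ arises from $\ogrph_1$ by replacing $\ogrph_1\cap\varphi(D)$ with the corolla whose center is colored by $\hcalb(\ogrph_1\cap\varphi(D))$ evaluated on the relevant tensor factors of $\clc$; hence $\ogrph_{1,\clc}$ and $\ogrph_{2,\ce_{\cb}^{\surf,\msb}(\gamma)(\clc)}$ agree on $\surf\setminus\varphi(D)$ and have equal values on $\varphi(D)$. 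Their difference is therefore a null graph, so the two represent the same string net, which is exactly the cocone identity. The universal property of the colimit then provides a unique linear map $\Phi$ with $\Phi\cir\iota_\ogrph \eq \Phi_\ogrph$, and $\Phi$ is surjective since every generator of $\osnb(\surf,\msb)$ has the form $[\ogrph_\clc] \eq \Phi(\iota_\ogrph(\clc))$.

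For the inverse I would define $\mu\colon\fk\mathrm{G}(\surf,\msb)\To\colim\ce_{\cb}^{\surf,\msb}$ on free generators by $\ogrph_\clc\mapsto\iota_\ogrph(\clc)$; because $\fk\mathrm{G}(\surf,\msb)$ is freely generated by the set of fully colored graphs, this prescription automatically extends to a well-defined linear map. The crux is to show that $\mu$ annihilates $\mathrm{N}(\surf,\msb)$. Given a null graph $\sum_i\lambda_i\grph_i$ with witnessing embedding $\varphi\colon D\hookrightarrow\mathrm{int}(\surf)$, all the $\grph_i$ share the same boundary datum on $\varphi(\partial D)$ and agree on $\surf\setminus\varphi(D)$, so coarsening each of them along $\varphi$ yields generating morphisms $\gamma_i\colon\ogrph_i\To\ogrph'$ with one common target $\ogrph'$. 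Using the colimit identities $\iota_{\ogrph_i} \eq \iota_{\ogrph'}\cir\ce_{\cb}^{\surf,\msb}(\gamma_i)$ I would rewrite $\mu\big(\sum_i\lambda_i\grph_i\big) \eq \iota_{\ogrph'}\big(\sum_i\lambda_i\,\ce_{\cb}^{\surf,\msb}(\gamma_i)(\clc_i)\big)$. The colorings $\ce_{\cb}^{\surf,\msb}(\gamma_i)(\clc_i)$ share a common tensor factor $\clc_{\mathrm{out}}$ on the vertices outside $\varphi(D)$ and differ only in the factor at the new corolla center, which equals $\langle\grph_i\cap\varphi(D)\rangle_D$; multilinearity of the tensor product then gives $\clc_{\mathrm{out}}\Otk\big(\sum_i\lambda_i\langle\grph_i\cap\varphi(D)\rangle_D\big) \eq 0$ by the defining property of a null graph. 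Thus $\mu$ descends to a linear map $\overline\mu\colon\osnb(\surf,\msb)\To\colim\ce_{\cb}^{\surf,\msb}$.

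Finally I would verify that $\Phi$ and $\overline\mu$ are mutually inverse: on the generators $[\ogrph_\clc]$ one has $\overline\mu([\ogrph_\clc]) \eq \iota_\ogrph(\clc)$ and $\Phi(\iota_\ogrph(\clc)) \eq [\ogrph_\clc]$, and since the elements $\iota_\ogrph(\clc)$ generate $\colim\ce_{\cb}^{\surf,\msb}$, both composites are identities. I expect the main obstacle to be the well-definedness of $\overline\mu$, i.e.\ the claim that the disk-coarsening morphisms of $\grb(\surf,\msb)$ generate exactly the null-graph relations. This rests on two points: every null graph is, by definition, supported in a single embedded disk, which is precisely the shape of a generating morphism; and the multilinearity built into $\osnb(\surf,\msb)$ (linearity in each vertex color) is faithfully reproduced by the genuine tensor products $\bigotimes_{v\in V(\ogrph)}H_v^\cb \eq \ce_{\cb}^{\surf,\msb}(\ogrph)$ sitting at the objects of the diagram.
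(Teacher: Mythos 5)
Your proposal is correct, and it reaches the conclusion by a route that is organized differently from the paper's, though the mathematical heart coincides. The paper verifies the universal property directly: given an arbitrary cocone $\{f_{\ogrph}\colon\ce_{\cb}^{\surf,\msb}(\ogrph)\To V\}$, it defines the factorizing map on a string net by evaluating $f_{\ogrph}$ on the vertex colors of any representative, and argues well-definedness by asserting that two representatives of the same string net have underlying partially colored graphs connected by a zigzag in $\grb(\surf,\msb)$, with colors transported along $\ce_{\cb}^{\surf,\msb}$. You instead form the abstract colimit in $\vct$, obtain the comparison map $\Phi$ from its universal property, and build an explicit two-sided inverse $\overline\mu$ by showing that the map $\fk\mathrm{G}(\surf,\msb)\To\colim\ce_{\cb}^{\surf,\msb}$ annihilates $\mathrm{N}(\surf,\msb)$. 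The crucial step is the same in both proofs: a null graph's constituents all coarsen along the witnessing disk to one common target $\ogrph'$, and multilinearity of $\bigotimes_{v}H_v^{\cb}$ turns the vanishing of $\sum_i\lambda_i\langle\grph_i\cap\varphi(D)\rangle_D$ into the vanishing of the corresponding element of $\ce_{\cb}^{\surf,\msb}(\ogrph')$. In fact your version of this step is spelled out more carefully than the paper's: the zigzag phrasing in the paper is stated for a single pair of representing graphs, which strictly covers only the case where their difference is one null graph, whereas well-definedness must be checked on arbitrary linear combinations in $\mathrm{N}(\surf,\msb)$ --- exactly what your argument does, null graph by null graph, on the free vector space. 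What the paper's route buys is economy: one universal-property check, with uniqueness of the factorizing map immediate; your route trades the uniqueness discussion for the verification that the two composites are identities, and in return makes explicit both the comparison map and the precise way the null-graph relations match the disk-coarsening relations of the diagram category.
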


\begin{proof}
Since graphs related by the local graphical calculus of $\cb$ represent the same 
string net, the prescription \eqref{eq:coconeosnb} indeed gives rise to a cocone.
To show that the cocone is initial, consider an arbitrary cocone 
  \be
  \{f_{\ogrph}\colon\ce_{\cb}^{\surf,\msb}(\ogrph)\To V\}
  _{\ogrph\in\grb(\surf,\msb)}^{} 
  \label{eq:coconef}
  \ee
to some $\fk$-vector space $V$. We need to show that there is a 
unique linear map $f\colon\osnb(\surf,\msb)\To V$ that makes the diagram 
  \be
  \begin{tikzcd}
  \ce_{\cb}^{\surf,\msb}(\ogrph)
  \\[4pt]
  \osnb(\surf,\msb) && V
  \arrow[from=1-1, to=2-1]
  \arrow["f"', dashed, from=2-1, to=2-3]
  \arrow["{f_{\mathring{\grph}}^{}}", from=1-1, to=2-3]
  \end{tikzcd}
  \ee
commute for every object $\ogrph\iN\grb(\surf,\msb)$. We claim that the 
desired linear map is given by
  \be
  \grph \xmapsto{~~} \clc_{\grph} \xmapsto{~~} f_{\ogrph}(\clc_{\grph}) \,,
  \ee
where $\grph$ is any graph representing $[\grph] \iN \osnb(\surf,\msb)$ and
$\clc_{\grph}\iN\ce_{\cb}^{\surf,\msb}(\ogrph)$.
To show that this map is well-defined, assume that the two colored graphs $\grph$
and $\grph'$ both represent the string-net $[\grph]$. By the definition of 
the string-net space, the underlying partially colored graphs $\ogrph$ and 
$\ogrph'$ are connected by a \emph{zigzag} 
  \be
  \begin{tikzcd} && \dots & {\mathring{\grph}'} 
  \\[-14pt]
  & \vdots
  \\
  \mathring{\grph}^{(1)} & \mathring{\grph}^{(2)}
  \\
  \mathring{\grph}
  \arrow[from=3-1, to=4-1] \arrow[from=3-1, to=3-2]
  \arrow[from=1-3, to=1-4] \arrow[from=2-2, to=3-2] 
  \end{tikzcd}
  \label{eq:zig}
  \ee
in the category $\grb(\surf,\msb)$, and the corresponding vertex colors 
$\clc_{\grph}$ and $\clc_{\grph'}$ are related by the zigzag in $\vct$ that 
is obtained by applying the functor $\ce_{\cb}^{\surf,\msb}$ to \eqref{eq:zig},
and are therefore mapped to the same element in $V$ by the legs of the
cocone (\ref{eq:coconef}). By construction, $f$ makes the relevant
diagrams commute and is unique.
\end{proof}

By recognizing the string-net space $\osnb(\surf,\msb)$ as a colimit we shed
new light on the canonical mapping class group action: Let
$\xi\iN\mcg(\surf)$ be a mapping class group element and $x$ a homeomorphism 
representing $\xi$. Then there is a canonical natural isomorphism 
  \be
  \begin{tikzcd}
  {\mathcal{G}\mathrm{raphs}_\mathcal{B}(\surf,\mathsf{b})} 
  \\
  && {\mathrm{Vect}_{\mathbbm{k}}}
  \\
  {\mathcal{G}\mathrm{raphs}_\mathcal{B}(\surf,\mathsf{b})}
  \arrow["{x_*}"', from=1-1, to=3-1]
  \arrow[""{name=0,anchor=center,inner sep=0},
         "{\mathcal{E}_\mathcal{B}^{\surf,\mathsf{b}}}", from=1-1, to=2-3]
  \arrow["{\mathcal{E}_\mathcal{B}^{\surf,\mathsf{b}}}"', from=3-1, to=2-3]
  \arrow["\cong"{pos=0.4},shift right=2,shorten <=13pt,shorten >=13pt,Rightarrow, from=0, to=3-1]
  \end{tikzcd}
  \ee
whose component at an object $\ogrph\iN\grb(\surf,\msb)$ is 
the canonical identification 
  \be
  \ce_{\cb}^{\surf,\msb}(\ogrph) = \!\bigotimes_{v\in V(\ogrph)}\!\! H_v^{\cb}
  ~\rarr{~\cong~} \!\!\bigotimes_{v'\in V(x_{*}\ogrph)}\!\!\!\! H_{v'}^{\cb}
  = \ce_{\cb}^{\surf,\msb}(x_{*}\ogrph) \,,
  \ee
where $x_{*}$ is the endofunctor obtained by pushing forward the
partially colored graphs. Now consider the square
  \be
  \begin{tikzcd}[row sep=2.1em]
  {\mathcal{E}_\mathcal{B}^{\surf,\mathsf{b}}(\mathring{\grph})
  = \bigotimes_{v\in V(\mathring{\grph})}\! H_v^{\mathcal{B}}}
  && {\mathrm{SN}^{\circ}_\mathcal{B}(\surf,\mathsf{b})} 
  \\
  {\mathcal{E}_\mathcal{B}^{\surf,\mathsf{b}}(x_* \mathring{\grph})
  = \bigotimes_{v'\in V(x_* \mathring{\grph})}\! H_{v'}^{\mathcal{B}}}
  && {\mathrm{SN}^{\circ}_\mathcal{B}(\surf,\mathsf{b})}
  \arrow["\cong"',shift left=5, from=1-1, to=2-1]
  \arrow["{\mathrm{SN}^{\circ}_\mathcal{B}(\xi,\mathsf{b})}", dashed, from=1-3, to=2-3]
  \arrow[from=1-1, to=1-3]
  \arrow[from=2-1, to=2-3]
  \end{tikzcd}
  \label{eq:squareEESS}
  \ee
Since the composite 
of the left vertical isomorphism and the horizontal morphism in the bottom row
is a component of a natural transformation followed by a leg of a cocone, it is the
leg of a cocone under $\ce_{\cb}^{\surf,\msb}$. As a consequence, there is a unique
endomorphism $\osnb(\xi,\msb)$ of the string-net space $\osnb(\surf,\msb)$ which 
provides the dashed vertical arrow that makes the square \eqref{eq:squareEESS} 
commute. By a straightforward diagram-chase we see that this endomorphism coincides 
with the action of $\xi \eq [x]\iN\mcg(\surf)$ on $\osnb(\surf,\msb)$.

Note that the vector space $\osnb(\surf,\msb)$ is, in general, not finite-dimensional.
In particular, $\osnb(D,\msb)$ is infinite-dimensional if the corresponding 2-hom 
space is infinite-dimensional.

%%%%%%%%%%%%%%%%%%%%%%%%%%%%%%%%%%%%%%%%%%%%%%%%%%%%%%%%%%%%%%%%%%%%%%%%

\Subsection{Functoriality under rigid pseudofunctors} \label{sec:functoriality}

As we have seen in Section \ref{sec:RSF}, rigid pseudofunctors
preserve the graphical calculus on disks for strictly pivotal bicategories.
Since the string-net spaces are built with the help of this graphical calculus,
one should expect that a rigid pseudofunctor between such bicategories induces 
canonical linear maps between the respective string-net spaces. Indeed we have

\begin{thm} \label{thm:osnb-posnb}
\label{thm:osnf}Let $\surf$ be a compact oriented surface, $\cb$
and $\cb'$ two strictly pivotal bicategories, $F\colon\cb\To\cb'$
a rigid pseudofunctor, and $\msb$ a $\cb$-boundary datum on $\partial\surf$.
There is a canonical $\mcg(\surf)$-intertwiner 
  \be
  \mathrm{SN}_{F}^\circ(\surf,\msb)
  \Colon\osnb(\surf,\msb) \rarr~ \posnb(\surf,F_*\msb) \,,
  \label{eq:SNB2SNBp}
  \ee
where $F_*\msb$ is the $\cb'$-boundary datum on $\partial\surf$
obtained by changing the coloring according to the map of objects and the local
functors entailed by $F$. The linear map \eqref{eq:SNB2SNBp} is defined by 
sending each representing $\cb$-colored graph to the $\cb'$-colored graph
obtained by applying the $F$-conjugation \emph{(\ref{eq:F-conj})}. Moreover,
the collection of such intertwiners corresponding to different surfaces
and boundary data is compatible with the concatenation of string nets.
\end{thm}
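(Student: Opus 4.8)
The plan is to define the map $\mathrm{SN}_{F}^\circ(\surf,\msb)$ directly on the free vector space $\fk\mathrm{G}(\surf,\msb)$ by recoloring and $F$-conjugating, then show it descends to the quotient, with the colimit description of Theorem \ref{thm:SN-colim} providing the packaging that makes the remaining properties transparent. First I would observe that $F$ induces a functor $F_*\colon\grb(\surf,\msb)\To\gr_{\cb'}(\surf,F_*\msb)$ between the graph categories: it leaves the underlying embedded topological graph untouched and merely recolors patches and edges via the object map and local functors of $F$, using $F(f^\vee)\eq(Ff)^\vee$ (valid by rigidity) to handle the two permissible ways of recoloring a half-edge. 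Since recoloring preserves connectedness and the local structure of a generating morphism, $F_*$ sends each disk-replacement generator to a disk-replacement generator and plainly commutes with the boundary-datum transformation $\msb\mapsto F_*\msb$.

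Next I would assemble the componentwise $F$-conjugation maps $(-)^F_{K_v}$ of \eqref{eq:FconjK} into a natural transformation
\[
\eta\Colon \ce_{\cb}^{\surf,\msb} \Longrightarrow \ce_{\cb'}^{\surf,F_*\msb}\cir F_* ,
\qquad
\eta_{\ogrph} \eq \bigotimes_{v\in V(\ogrph)} (-)^F_{K_v} ,
\]
between the evaluation functors. Its naturality with respect to a generating morphism $\gamma\colon\ogrph_1\To\ogrph_2$ with associated embedding $\varphi\colon D\,{\hookrightarrow}\,\surf$ is precisely the statement that $F$-conjugation commutes with the disk evaluation $\hcalb(\ogrph_1\,{\cap}\, D_\gamma)$; but this is exactly the naturality square of the monoidal natural transformation $(-)^F\colon\calb\Rightarrow\pcalb\cir F_*$ furnished by Corollary \ref{cor:RPconj}, applied to the corresponding morphism in $\crlla$. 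Feeding $\eta$ into the universal property of the colimit then yields a unique linear map $\osnb(\surf,\msb)\To\posnb(\surf,F_*\msb)$ that sends $[\ogrph_\clc]$ to $[(F_*\ogrph)_{\eta(\clc)}]$, matching the stated recipe. Equivalently, working directly on $\fk\mathrm{G}(\surf,\msb)$, well-definedness reduces to showing that null graphs map to null graphs: a null graph is supported on an embedded disk where the vertex values sum to zero, and since $F$-conjugation is linear and preserves disk values by Corollary \ref{cor:RPconj}, the recolored family again sums to zero inside the same disk while agreeing with the original recoloring outside it, hence is null.

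For the $\mcg(\surf)$-intertwiner property I would use the colimit picture of the mapping class group action recorded in \eqref{eq:squareEESS}. The pushforward endofunctor $x_*$ associated with a homeomorphism $x$ acts only on the embedding of graphs, so $F_*\cir x_* \eq x_*\cir F_*$ on the two graph categories, and the canonical tensor-factor reindexing isomorphisms $\ce_{\cb}^{\surf,\msb}(\ogrph)\iso\ce_{\cb}^{\surf,\msb}(x_*\ogrph)$ commute with the componentwise maps $\eta$. Consequently $\eta$ is compatible with the natural isomorphisms that induce the two $\mcg$-actions, and the uniqueness clause of the colimit universal property forces $\mathrm{SN}_F^\circ(\surf,\msb)$ to intertwine $\osnb(\xi,\msb)$ and $\posnb(\xi,F_*\msb)$ for every $\xi\iN\mcg(\surf)$.

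Finally, compatibility with concatenation follows from the local, vertexwise nature of the construction. Gluing two string nets along matching boundary components joins pairs of legs into internal edges without introducing or altering internal vertices; since $\eta$ acts one vertex at a time and $F_*$ transforms the boundary data of both sides in the same way, the $F$-conjugate of a glued graph agrees with the concatenation of the $F$-conjugated pieces, the matching of glued edge colors being preserved by $F(f^\vee)\eq(Ff)^\vee$. I expect the main obstacle to be bookkeeping here rather than anything conceptual: one must check that the identifications defining concatenation are genuinely intertwined by $\eta$, i.e.\ that passing to $F$-conjugates commutes with the sewing maps \emph{on the nose} and not merely up to the idempotent correction that appears for general rigid separable Frobenius functors (Theorem \ref{thm:B2B'}). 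This is exactly where the hypothesis that $F$ be a \emph{pseudofunctor} — so that its lax and oplax structures are mutually inverse and no full Frobenius graphs need be inserted — is essential.
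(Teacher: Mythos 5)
Your proposal is correct and follows essentially the same route as the paper's proof: both construct the natural transformation $\eta\colon\ce_{\cb}^{\surf,\msb}\Rightarrow\ce_{\cb'}^{\surf,F_*\msb}\cir F_*$ from componentwise $F$-conjugation, verify naturality on generating morphisms via Corollary \ref{cor:RPconj}, and invoke the colimit description of Theorem \ref{thm:SN-colim} to induce the unique map. Your spelled-out arguments for $\mcg(\surf)$-equivariance and concatenation compatibility (which the paper dismisses as evident), and the observation that pseudofunctoriality is precisely what avoids the idempotent corrections of Theorem \ref{thm:B2B'}, are accurate elaborations of the same argument rather than a different approach.
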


\begin{proof}
Consider the natural transformation 
  \be
  \begin{tikzcd}
  \mathcal{G}\mathrm{raphs}_\mathcal{B}(\surf,\mathsf{b})
  \\
  && \mathrm{Vect}_{\mathbbm{k}}
  \\
  \mathcal{G}\mathrm{raphs}_{\mathcal{B}'}(\surf,F_*\mathsf{b})
  \arrow["{F_*^{}}"', from=1-1, to=3-1]
  \arrow[""{name=0, anchor=center, inner sep=0},
         "{\mathcal{E}_\mathcal{B}^{\surf,\mathsf{b}}}", from=1-1, to=2-3]
  \arrow["{\mathcal{E}_{\mathcal{B}'}^{\surf,F_*\mathsf{b}}}"', from=3-1, to=2-3]
  \arrow["{(-)^F}"{pos=0.4},shift right=2,shorten <=14pt,shorten >=14pt,Rightarrow, from=0, to=3-1]
  \end{tikzcd}
  \ee
whose component at $\ogrph\iN\grb(\surf,\msb)$ is the $F$-conjugation
  \be
  (-)_{\ogrph}^{F} \Colon\ce_{\cb}^{\surf,\msb}(\ogrph)
  = \bigotimes_{v\in V(\ogrph)}\!\! H_{v}^{\cb}\
  \rarr~ \!\!\bigotimes_{v'\in V(F_{*}\ogrph)}\!\! H_{v'}^{\cb'}
  = \ce_{\cb'}^{\surf,F_{*}\msb}(\ogrph) \,.
  \ee
The naturality square for a generating morphism $\gamma\colon\ogrph_1\To\ogrph_2$
reads 
  \be
  \begin{tikzcd}[column sep=8.8em,row sep=2.1em]
  \bigotimes_{v_1\in V(\mathring{\grph}_1)}H_{v_1}^{\mathcal{B}}
  & \bigotimes_{v_2\in V(\mathring{\grph}_2)}H_{v_2}^{\mathcal{B}}
  \\
  \bigotimes_{v_1'\in V(F_*\mathring{\grph}_1)}H_{v_1'}^{\mathcal{B}}
  & \bigotimes_{v_2'\in V(F_*\mathring{\grph}_2)}H_{v_2'}^{\mathcal{B}}
  \arrow["{(-)^F_{\mathring{\grph}_1}}"', from=1-1, to=2-1]
  \arrow["{(-)^F_{\mathring{\grph}_2}}", from=1-2, to=2-2]
  \arrow["{\mathrm{id}\,\mathbin{\otimes_{\mathbbm{k}}}\,\mathsf{\widehat{\GCal}}_{B}
         (\mathring{\grph}_{1}\cap D_{\gamma})}", from=1-1, to=1-2]
  \arrow["{\mathrm{id}\,\mathbin{\otimes_{\mathbbm{k}}}\,\mathsf{\widehat{\GCal}}_{B'}
         (F_*\mathring{\grph}_{1}\cap D_{F_*\gamma})}"', from=2-1, to=2-2] 
  \end{tikzcd}
  \ee
Commutativity of this diagram follows from Corollary \ref{cor:RPconj}. Now consider 
the square
  \be
  \begin{tikzcd}[column sep=3.8em,row sep=2.1em]
  {\mathcal{E}_\mathcal{B}^{\surf,\mathsf{b}}(\mathring{\grph})
  = \bigotimes_{v\in V(\mathring{\grph})}\! H_{v}^{\mathcal{B}}}
  & {\mathrm{SN}^{\circ}_\mathcal{B}(\surf,\mathsf{b})}
  \\
  {\mathcal{E}_{\mathcal{B}'}^{\surf,F_*\mathsf{b}}(F_*\mathring{\grph})
  = \bigotimes_{v'\in V(F_*\mathring{\grph})}\! H_{v'}^{\mathcal{B}}}
  & {\mathrm{SN}^{\circ}_{\mathcal{B}'}(\surf,F_*\mathsf{b})}
  \arrow["{(-)^F_{\mathring{\grph}}}"', shift left=5, from=1-1, to=2-1]
  \arrow["{\mathrm{SN}^{\circ}_F(\surf,\mathsf{b})}", dashed, from=1-2, to=2-2]
  \arrow[from=1-1, to=1-2, "{\eqref{eq:coconeosnb}}"]
  \arrow["{\eqref{eq:coconeosnb}}", from=2-1, to=2-2] 
  \end{tikzcd}
  \ee
Since the composite of the left vertical arrow and the horizontal arrow in the bottom
row is a component of a natural transformation followed by a leg of a cocone, it is the
leg of a cocone under $\ce_{\cb}^{\surf,\msb}$. Therefore there is a unique linear map
  \be
  \mathrm{SN}_F^{\circ}(\surf,\msb) \Colon\osnb(\surf,\msb) \rarr~ \posnb(\surf,F_*\msb)
  \ee
that makes the square commute. A direct diagram-chase shows that this
linear map has the asserted form. Equivariance and 
compatibility with concatenation are evident.
\end{proof}

\begin{rem}
Since a rigid separable Frobenius functor in general preserves horizontal products 
and whiskerings only up to idempotents of the form \eqref{eq:RSFidem}, the change 
of colors that is brought about by conjugation with respect to a rigid separable 
Frobenius functor \emph{does not descend} to linear maps between string-net spaces.
However, as we will see in Theorem \ref{thm:universal} as a special case, every 
rigid separable Frobenius functor does induce linear maps (intertwining the
mapping class group actions) between string-net spaces with the help of 
Frobenius graphs.
\end{rem}

%%%%%%%%%%%%%%%%%%%%%%%%%%%%%%%%%%%%%%%%%%%%%%%%%%%%%%%%%%%%%%%%%%%%%%%%

\Subsection{Cylinder categories over circles} \label{sec:oCylS}

The notion of string-net spaces for a strictly pivotal bicategory
$\cb$ allows us to promote the set of $\cb$-boundary data on a closed
oriented 1-manifold to a (small) $\fk$-linear category.

\begin{defn} \label{def:oCylS}
Let $\cb$ be a strictly pivotal bicategory and $\ell$ a closed oriented
1-manifold. If $\ell$ is non-empty, the \emph{cylinder category} $\ocyl(\cb,\ell)$
for \emph{$\cb$} over $\ell$ is the following category:
An object of $\ocyl(\cb,\ell)$ is a $\cb$-boundary datum on $\ell$. A morphism of
$\ocyl(\cb,\ell)$ between two boundary data is given by a string net on the cylinder 
$\ell\Times I$ that matches the boundary data at  $\ell\Times \{0\}$ and  $\ell\Times
\{1\}$. The composition of morphisms is given by the concatenation of string nets.
 \\[2pt]
For the empty 1-manifold $\emptyset$ we set $\ocyl(\cb,\emptyset)\,{\coloneqq}\,\vct$.
\end{defn}

\begin{example}
For instance, for any choice of $\alpha$ and $\beta$,
  \be
  \tikzfig{CC0}\quad\colon\qquad \tikzfig{CC2} \quad\rarr{~~}\quad \tikzfig{CC1}
  \label{eq:ocylbmor}
  \ee
is a morphism in $\ocyl(\cb,S^{1})$, with an appropriate $\cb$-coloring.
(The boundary component $\ell\Times\{0\}$ is regarded as \emph{in-coming} and supports
the domain boundary datum, hence the opposite convention for the edge labels.) 
\end{example}

\begin{rem}
It follows in particular that for every compact oriented surface $\surf$ the string-net
construction provides a functor 
  \be
  \osnb(\surf,-)\Colon \ocyl(\cb,\partial\surf) \rarr~ \vct
  \ee
which maps a morphism of the cylinder category to the linear map that is obtained
by sewing the cylinder to the boundary.
\end{rem}

%%%%%%%%%%%%%%%%%%%%%%%%%%%%%%%%%%%%%%%%%%%%%%%%%%%%%%%%%%%%%%%%%%%%%%%%

\Subsection{Pointed pivotal bicategories and cylinder categories over intervals} \label{sec:oCylI}

In Definition \ref{def:oCylS} we have introduced cylinder categories over 
closed oriented 1-manifolds, which uses a pivotal bicategory $\cb$ as an 
input. This is sufficient for obtaining a closed modular functor. In order to
obtain instead an open-closed modular functor -- a goal that we will achieve in
Section \ref{sec:open-closed} -- we need to define cylinder categories 
for 1-manifolds with boundary as well. As we will explain in the next section,
to this end we must make use of a \emph{pointed} 
strictly pivotal bicategory $(\cb,*_{\cb})$, i.e.\ a strictly pivotal bicategory
endowed with a distinguished object $*_{\cb}\in\cb$.

\begin{defn} \label{def:oCylI}
Let $(\cb,*_{\cb})$ be a pointed strictly pivotal bicategory and $\ell$ a 
compact oriented 1-manifold with possibly non-empty boundary. The 
\emph{cylinder category $\ocyl(\cb,*_{\cb},\ell)$ for $(\cb,*_{\cb})$ over
$\mathscr{\ell}$} is the following category:
The objects of $\ocyl(\cb,*_{\cb},\ell)$ are the \emph{$(\cb,*_{\cb})$-boundary
data} on $\ell$, i.e.\ the $\cb$-boundary data whose 1-cells adjacent to a 
boundary point of $\ell$ are all colored with the distinguished object $*_{\cb}$. 
The morphisms of $\ocyl(\cb,*_{\cb},\ell)$ are string nets on the cylinder
$\ell\Times I$ over $\ell$. Their composition is given by concatenating string nets.
\end{defn}

We use a specific color (gray, in the color version) to indicate the distinguished 
object $*_{\cb}$. As an illustration,
  \be
  \msb ~=~ \tikzfig{CI0}
  \ee
is an object in $\ocyl(\cb,*_{\cb},I)$. 
For a \emph{closed} oriented 1-manifold $\ell$, we just have
$\ocyl(\cb,*_{\cb},\ell)\equiv\ocyl(\cb,\ell)$.

The following examples of pointed strictly pivotal  bicategories are important to us:

\begin{example}
The delooping $\cbc$ of a strictly pivotal tensor category $\cc$ has only a single
object and is thus automatically pointed.  The restriction on the objects in Definition 
\ref{def:oCylI} is vacuous in this case. Accordingly we write 
  \be
  \ocyl(\cc,\ell)\equiv\ocyl(\cbc,*,\ell)
  \ee
and call an object in this bicategory a \emph{$\cc$-boundary value} on $\ell$.
\end{example}

\begin{example}\label{exa:Frc*}
The strictly pivotal bicategory $\cfrc$ of simple special symmetric Frobenius
algebras in a strictly pivotal tensor category $\cc$ is canonically pointed with 
  \be
  *_{\cfrc}^{} \coloneqq \tu \,\in\cfrc \,,
  \ee
i.e.\ the tensor unit of $\cc$, canonically viewed as a Frobenius algebra.
\end{example}

%%%%%%%%%%%%%%%%%%%%%%%%%%%%%%%%%%%%%%%%%%%%%%%%%%%%%%%%%%%%%%%%%%%%%%%%

\Subsection{Functoriality under embeddings} \label{sec:func-embed}

The primary reason for taking as the categorical input for the definition of cylinder 
categories over compact oriented 1-manifolds with boundary a \emph{pointed} strictly
pivotal bicategory is that we want the prescription to be functorial
with respect to the embedding of manifolds. This requirement arises, for instance, 
as a natural property when thinking of topological field theories and their modular 
functors in the spirit \cite{brfv} of general covariance in local quantum field theory.
For us, functoriality under embeddings is a crucial ingredient for being able to
associate a profunctor $\cosnb(\surf;-,\sim)$ to a two-dimensional bordisms $\surf$,
which will be done in \eqref{eq:SNoprofunctor} below.

Any continuous map $f\colon\ell_{1}\To\ell_{2}$ 
between 1-manifolds $\ell_{1}$ and $\ell_{2}$ extends canonically
to a map $f{\times} I\colon\ell_{1}\Times I\To\ell_{2}\Times I$ between
the corresponding cylinders, by setting $f{\times} I\colon(p,t) \,{\mapsto}\, (f(p),t)$.
Therefore an orientation preserving automorphism
$x\colon S^1\To S^1$ of the circle induces a functor 
  \be
  \ocyl(\cb,x)\Colon \ocyl(\cb,S^{1}) \rarr~ \ocyl(\cb,S^{1})
  \ee
by pushing forward the objects via $x$ and the morphisms via $x\Times I$.
In particular, the cylinder category $\ocyl(\cb,S^{1})$ carries an action
of the circle group $\mathrm{U}(1) \,{\subset}\, \mathrm{Aut}(S^{1})$.

As we will see now, in order to achieve functoriality also under embeddings of general 
oriented 1-manifolds, it does not suffice to take a strictly pivotal bicategory $\cb$ 
as an input, but we also need to fix a suitable distinguished object $*_{\cb}$ of $\cb$.
To understand this requirement, consider first an orientation preserving embedding 
$f\colon\bigsqcup_{i=1}^{n}\! I \,{\hookrightarrow}\, I$ of $n$ copies of the
standard interval into itself. Once we impose the restriction
on the $\cb$-coloring of the 1-cells of the boundary data as stated in Definition 
\ref{def:oCylI}, we can define a functor 
  \be
  \ocyl(\cb,*_{\cb},f)\Colon \ocyl(\cb,*_{\cb},\bigsqcup_{i=1}^{n}I)
  \,=\, \prod_{i=1}^{n}\ocyl(\cb,*_{\cb},I) \rarr~ \ocyl(\cb,*_{\cb},I)
  \label{eq:oCylnItoI}
  \ee
by pushing forward the objects and morphisms via $f$ and $f\Times I$,
respectively and then coloring the complement $I\,{\setminus}\,\mathrm{im\,}f$
(respectively, $I^{2}\,{\setminus}\,\mathrm{im}(f\Times I)$) with the object $*_{\cb}$.
In contrast, in the absence of a distinguished object of $\cb$ no consistent
coloring of these complements is possible. As a special case, the embedding 
$\emptyset \,{\hookrightarrow}\, I$ of the empty 1-manifold
induces a functor of the type $\ocyl(\cb,\emptyset) \eq \vct\Rarr~\ocyl(\cb,*_{\cb},I)$
by sending $\fk$ to the $\cb$-boundary datum $\msb_{I}^{*}$ on
$I$ that has the entire interval colored with $*_{\cb}$. (This
works analogously for the embedding $\emptyset \,{\hookrightarrow}\, \ell$ for
any oriented 1-manifold $\ell$. In this sense, \emph{all} cylinder
categories are canonically pointed, by pointing the input bicategory.)
Further, by fixing a binary embedding $I\,{\sqcup}\, I\To I$, the prescription
\eqref{eq:oCylnItoI} endows the category $\ocyl(\cb,*_{\cb},I)$ with a monoidal 
structure, with the tensor unit given by $\msb_{I}^*$. Moreover, by sending
every object in $\ocyl(\cb,*_{\cb},I)$ to the corresponding horizontal
composite of the coloring 1-morphisms, we obtain an equivalence
$\ocyl(\cb,*_{\cb},I) \,{\simeq}\, \mathrm{End}_{\cb}(*_{\cb})$
of tensor categories, with the monoidal product for the endomorphism category given
by the horizontal composition. Thus we have arrived at

\begin{prop} \label{prop:ocylIEnd}
Let $(\cb,*_{\cb})$ be a pointed strictly pivotal
bicategory. There is a canonical monoidal equivalence 
  \be
  \ocyl(\cb,*_{\cb},I) \,\simeq\, \mathrm{End}_{\cb}(*_{\cb}) \,.
  \ee
\end{prop}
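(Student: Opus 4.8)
The plan is to construct an explicit functor $\Phi\colon\ocyl(\cb,*_\cb,I)\To\mathrm{End}_\cb(*_\cb)\eq\cb(*_\cb,*_\cb)$ and to verify directly that it is strong monoidal, fully faithful, and essentially surjective. On objects I would send a $(\cb,*_\cb)$-boundary datum $\msb$ on $I$ to the horizontal composite $m_\msb \,{:=}\, f_1\hocO\dotsb\hocO f_k$ of its coloring $1$-morphisms, read in the direction of $I$; since the two segments adjacent to $\partial I$ are colored by $*_\cb$ (Definition \ref{def:oCylI}), $m_\msb$ is a $1$-endomorphism of $*_\cb$. On morphisms I would use that the cylinder $I\Times I$ is a disk: a string net from $\msb$ to $\msb'$ is a $\cb$-colored graph on $I\Times I$ whose legs meet only the bottom edge $I\Times\{0\}$ (the incoming boundary, carrying $\msb$) and the top edge $I\Times\{1\}$ (carrying $\msb'$), with the two side regions colored by $*_\cb$. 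Choosing the polarization $k$ whose input half-edges are those on the bottom and whose output half-edges are those on the top, the graphical calculus of Section \ref{sec:ufgcpb} assigns to such a graph a $2$-morphism in $\widehat{h_v^\cb}(k)\eq\hom_{\cb(*_\cb,*_\cb)}(m_\msb,m_{\msb'})$; this is the value of $\Phi$ on the morphism.

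That $\Phi$ is well defined on morphisms and fully faithful is essentially a restatement of Example \ref{exa:osnbd}. Identifying $I\Times I$ with the standard disk and the combined boundary datum with a $\cb$-boundary datum on $S^1$, that example provides a canonical isomorphism $\osnb(I\Times I,-)\iso\calb(K)\eq H_v^\cb$; postcomposing with the leg $p_v^k$ of the limit cone \eqref{eq:pvk} for the above polarization identifies the hom-space $\hom_{\ocyl(\cb,*_\cb,I)}(\msb,\msb')$ with the $2$-hom space $\hom_{\cb(*_\cb,*_\cb)}(m_\msb,m_{\msb'})$. Hence $\Phi$ is a bijection on each hom-set. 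Essential surjectivity is immediate: any $m\iN\cb(*_\cb,*_\cb)$ equals $m_\msb$ (up to the canonical unitors) for the datum $\msb$ with a single marked point colored $m$ and both segments colored $*_\cb$.

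Next I would check functoriality and monoidality, both of which follow from the locality of the graphical calculus. The identity of $\msb$ is the straight graph of parallel strands, which the calculus sends to $\id_{m_\msb}$; concatenation of string nets along $I\Times\{1\}\eq I\Times\{0\}$ is computed by stacking the two squares and evaluating, which by locality equals the vertical composite of the two $2$-morphisms, so $\Phi$ respects composition. For the monoidal structure, the product on $\ocyl(\cb,*_\cb,I)$ is induced via \eqref{eq:oCylnItoI} by a fixed binary embedding $I\,{\sqcup}\,I\,{\hookrightarrow}\,I$, which places two boundary data side by side and colors the complement with $*_\cb$; on objects this realizes $m_{\msb_1\otimes\msb_2}\eq m_{\msb_1}\hocO m_{\msb_2}$, the interstitial $*_\cb$-region contributing $\id_{*_\cb}$, and on morphisms the side-by-side placement of two string nets is evaluated, again by locality, as the horizontal composite of the corresponding $2$-morphisms. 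The tensor unit $\msb_I^*$ maps to $\id_{*_\cb}$. The structure isomorphisms of $\Phi$ are built from the associators and unitors of $\cb$, are identities after strictification, and the monoidal-functor coherence axioms reduce to the coherence theorem for $\cb$.

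Assembling these observations yields a strong monoidal, fully faithful, essentially surjective functor, hence a monoidal equivalence. The main obstacle is not any single deep step but the bookkeeping that keeps the two independent gluings straight: concatenation of cylinders (composition in $\ocyl$) must be matched with vertical composition of $2$-morphisms, whereas the side-by-side gluing (the monoidal product) must be matched with horizontal composition; one must also confirm that the complementary $*_\cb$-colored regions introduced by the embeddings are genuinely absorbed as identity $1$-morphisms rather than altering the composites. Both matchings are instances of the locality and isotopy-invariance of the disk graphical calculus established in Section \ref{sec:ufgcpb}, so the argument is conceptually routine once the identification of cylinder hom-spaces with $2$-hom spaces in the first two paragraphs is in place.
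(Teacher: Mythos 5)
Your proposal is correct and takes essentially the same route as the paper: the paper also defines the equivalence by sending a boundary datum to the horizontal composite of its coloring 1-morphisms, with the morphism-level identification implicitly given by the disk graphical calculus (the paper states the proposition as an immediate consequence of this description, without spelling out the verification). Your added details — full faithfulness via Example \ref{exa:osnbd}, concatenation matching vertical composition, and the side-by-side embedding matching horizontal composition — are exactly the checks the paper leaves implicit, and they are carried out correctly.
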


Next consider an embedding $f\colon I \,{\hookrightarrow}\, S^1$. Again
by pushing forward the objects (respectively, morphisms) via the embedding
(respectively, via $f\Times I$) and coloring the complement of the image with the
distinguished color, we obtain a functor 
  \be
  \ocyl(\cb,*_{\cb},f)\Colon \ocyl(\cb,*_{\cb},I)
  \rarr~ \ocyl(\cb,S^1) \equiv\ocyl(\cb,*_{\cb},S^1) \,.
  \ee
This is demonstrated in the following picture:
  \be
  \tikzfig{FE0} \qquad\xmapsto{~~~~}\qquad \tikzfig{FE1}
  \ee
~

We now promote the assignment of cylinder categories to 1-manifolds to a symmetric
monoidal functor between the following
categories $\mathrm{Emb}_1^{\mathrm{or}}$ and $\cat_{\fk}$:

\begin{defn}
The symmetric monoidal category $\mathrm{Emb}_1^{\mathrm{or}}$ has as objects 
compact oriented 1-manifolds and as morphisms orientation preserving embeddings; 
the monoidal product of $\mathrm{Emb}_1^{\mathrm{or}}$ is disjoint union. 
  \\[2pt]
The symmetric monoidal category $\cat_{\fk}$ has as objects small $\fk$-linear
categories and as morphisms $\fk$-linear functors; the monoidal product of $\cat_{\fk}$
is the Cartesian product.
\end{defn}
 
Combining the considerations above we have

\begin{prop} \label{prop:cyl-funct}
The assignment of cylinder categories for a pointed pivotal bicategory
$(\cb,*_{\cb})$ to compact oriented 1-manifolds canonically extends
to a symmetric monoidal functor
  \be
  \ocyl(\cb,*_{\cb},-)\Colon \mathrm{Emb}_1^{\mathrm{or}} \rarr~ \cat_{\fk} \,.
  \ee
\end{prop}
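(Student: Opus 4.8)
The plan is to assemble the pushforward-and-extend operation already described for the basic embeddings into one functor $\ocyl(\cb,*_{\cb},-)$ and then check the three defining properties of a symmetric monoidal functor: well-definedness on morphisms, strict functoriality in the embedding, and compatibility of disjoint union with the monoidal product of $\cat_{\fk}$. On objects the assignment is $\ell\mapsto\ocyl(\cb,*_{\cb},\ell)$. Given an orientation-preserving embedding $f\colon\ell_{1}\hookrightarrow\ell_{2}$, I would define $\ocyl(\cb,*_{\cb},f)$ on objects by pushing a $(\cb,*_{\cb})$-boundary datum forward along $f$ and coloring the complement $\ell_{2}\,{\setminus}\,\mathrm{im}\,f$ with $*_{\cb}$, and on morphisms by pushing a string net on $\ell_{1}\Times I$ forward along $f\Times I$ and coloring the complementary region of $\ell_{2}\Times I$ with $*_{\cb}$. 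This recovers, as special cases, the functors for the basic embeddings $\bigsqcup I\hookrightarrow I$, $I\hookrightarrow S^{1}$ and the circle automorphisms constructed above.

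The first and most delicate step is to verify that $\ocyl(\cb,*_{\cb},f)$ is a well-defined $\fk$-linear functor into $\ocyl(\cb,*_{\cb},\ell_{2})$. On objects this holds because the cells of $\ell_{1}$ adjacent to $\partial\ell_{1}$ are $*_{\cb}$-colored by the very definition of a $(\cb,*_{\cb})$-boundary datum (Definition \ref{def:oCylI}), so they glue continuously to the $*_{\cb}$-colored complement and every boundary cell of $\ell_{2}$ is again $*_{\cb}$-colored; the distinguished object is precisely what makes this extension of the coloring possible and consistent at the interface $\partial(\mathrm{im}\,f)$. On morphisms the key point is that the local relations defining string nets — the null graphs of Definition \ref{def:osnb} — are supported in embedded disks that may be taken disjoint from the region whose ambient object-coloring was changed; hence the construction sends null graphs to null graphs, descends to the string-net quotient, is linear, and manifestly respects the concatenation of cylinders. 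This is where the real work lies; the remaining steps are bookkeeping showing that the three operations involved (pushforward along $f\Times I$, constant extension by $*_{\cb}$, and gluing along $\ell\Times\{0,1\}$) commute appropriately.

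Functoriality in $f$ is then strict, so that no coherence data are needed (the target is a $1$-category). The identity embedding has empty complement, giving $\ocyl(\cb,*_{\cb},\id_{\ell})=\id$. For a composite $\ell_{1}\xrightarrow{f}\ell_{2}\xrightarrow{g}\ell_{3}$ one uses $(g\cir f)\Times I=(g\Times I)\cir(f\Times I)$ together with the observation that extending by $*_{\cb}$ in two stages agrees with extending once: the intermediate complement $\ell_{2}\,{\setminus}\,\mathrm{im}\,f$ is colored $*_{\cb}$, pushed into $\ell_{3}$, and unioned with $\ell_{3}\,{\setminus}\,\mathrm{im}\,g$, and one has $g(\ell_{2}\,{\setminus}\,\mathrm{im}\,f)\cup(\ell_{3}\,{\setminus}\,\mathrm{im}\,g)=\ell_{3}\,{\setminus}\,\mathrm{im}(g\cir f)$, all colored $*_{\cb}$. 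Hence $\ocyl(\cb,*_{\cb},g\cir f)=\ocyl(\cb,*_{\cb},g)\cir\ocyl(\cb,*_{\cb},f)$ and we obtain a genuine functor $\mathrm{Emb}_{1}^{\mathrm{or}}\To\cat_{\fk}$.

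Finally I would supply the symmetric monoidal structure. Since $(\ell_{1}\,{\sqcup}\,\ell_{2})\Times I=(\ell_{1}\Times I)\,{\sqcup}\,(\ell_{2}\Times I)$, a $(\cb,*_{\cb})$-boundary datum on the disjoint union is exactly a pair of such data and a $\cb$-colored graph on the disjoint cylinder is exactly a pair of graphs; because the null-graph relations are local, the freely generated spaces and their string-net quotients split accordingly, giving natural structure isomorphisms $\ocyl(\cb,*_{\cb},\ell_{1})\Times\ocyl(\cb,*_{\cb},\ell_{2})\iso\ocyl(\cb,*_{\cb},\ell_{1}\,{\sqcup}\,\ell_{2})$ intertwining the symmetry of disjoint union with that of the product, the monoidal unit being furnished by the assignment $\ocyl(\cb,*_{\cb},\emptyset)=\vct$ of Definition \ref{def:oCylS}. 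Naturality in embeddings holds because pushforward-and-extend respects disjoint unions on the nose, and the associativity, symmetry and unit coherence diagrams commute because disjoint union of manifolds and of cylinders is strictly associative, symmetric and unital. The single genuinely substantive point in the whole argument remains the well-definedness on morphisms established in the second step.
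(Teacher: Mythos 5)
Your proposal is correct and follows essentially the same route as the paper: the paper's own ``proof'' of Proposition \ref{prop:cyl-funct} is simply to combine the pushforward-and-extend-by-$*_{\cb}$ constructions already given for the basic embeddings ($\bigsqcup_i I\hookrightarrow I$, $I\hookrightarrow S^1$, circle automorphisms, $\emptyset\hookrightarrow\ell$), and your general definition of $\ocyl(\cb,*_{\cb},f)$ is exactly that construction. The verifications you add (null graphs push forward to null graphs since the witnessing disks lie in the interior of $\ell_1\Times I$ and hence in the image of $f\Times I$, strict functoriality via $g(\ell_2\,{\setminus}\,\mathrm{im}\,f)\cup(\ell_3\,{\setminus}\,\mathrm{im}\,g)=\ell_3\,{\setminus}\,\mathrm{im}(g\cir f)$, and the splitting of string-net spaces over disjoint unions) are details the paper leaves implicit, and they are all sound.
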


\begin{rem}
As already mentioned, every cylinder category is pointed by the embedding of
the empty manifold $\emptyset$. As a consequence,
the functor $\ocyl(\cb,*_{\cb},-)$ factors through the forgetful functor
$\mathrm{Cat}_{\fk}^{\mathrm{pointed}}\To\mathrm{Cat}_{\fk}$.
Even better, we actually obtain a symmetric monoidal 2-functor $\cc\mathrm{yl}
^{\circ}(\cb,*_{\cb},-)\colon\ce\mathrm{mb}_1^{\mathrm{or}}\To\ccat_{\fk}$,
where $\ce\mathrm{mb}_{1}^{\mathrm{or}}$ is the symmetric monoidal (2,1)-category 
-- that is, a symmetric monoidal bicategory with only invertible 2-morphisms --
of compact oriented 1-manifolds, orientation preserving
embeddings and isotopy classes of isotopies between embeddings.
\end{rem}

Finally, let $\ell$ be a compact oriented 1-manifold and $\bar{\ell}$
the same underlying 1-manifold but with opposite orientation.
Due to the strict pivotality of $\cb$ we have a canonical isomorphism
  \be
  \ocyl(\cb,*_{\cb},\ell)^{\op}\iso\ocyl(\cb,*_{\cb},\bar{\ell})
  \ee
which sends an object $\overline{\msb} \iN \ocyl(\cb,*_{\cb},\ell)^{\op}$ to the
boundary datum $\msb^\vee$ on $\bar{\ell}$ that is obtained from $\msb$ by taking 
the duals of the coloring 1-morphisms. Through this identification an orientation
\emph{reversing} embedding $f\colon\ell_1 \To \ell_2$ induces a functor 
  \be
  \ocyl(\cb,*_{\cb},f)\Colon 
  \ocyl(\cb,*_{\cb},\ell_1)^{\op} \rarr~ \ocyl(\cb,*_{\cb},\ell_2) \,.
  \ee

\begin{rem} \label{rem:shadowTHH}
The string net-construction can be adapted to bicategories $\cb$ 
that do not possess a pivotal structure \cite{knsT}: one endows both 1-
and 2-manifolds with 2-framings and restricts to string nets 
that are \emph{progressive} with respect to the 2-framing.
Concretely, as before we regard the standard sphere $S^1$ as a the unit circle 
in the complex plane $\complex$ and the standard interval $I$ as the subset 
$[0,1]$ of the real axis. A 2-framing on the cylinder over $I$ and over $S^1$, 
respectively, is then obtained by using as a first vector field a non-zero 
vector field given by the orientation of $I$ and $S^1$, and as a second vector 
field in the case of $I$ one that uniformly points towards the positive 
$y$-direction of $\complex$ and in the case of $S^1$ one that points
from each point on $S^1$ towards the origin $0\iN\complex$. Upon fixing a 
framed embedding $I\,{\hookrightarrow}\, S^1$, we then obtain a functor 
  \be
  \bigsqcup_{a\in\cb}\! \cb(a,a) \simeq \bigsqcup_{a\in\cb}\! \ocyl(\cb,a,I)
  \xrightarrow{~\bigsqcup_{a\in\cb}\ocyl(\cb,a,f)~} \ocyl(\cb,S^1) \,.
  \label{eq:shadowsn}
  \ee
This functor admits the structure of a \emph{categorified trace} on $\cb$: A 
categorified trace on a bicategory $\cb$ with values in a category $\ca$ is a 
functor $ [\![ - ]\!] \colon \bigsqcup_{a\in\cb}\cb(a,a)\Rarr~\ca$ from the
hom-categories of $\cb$ to the category $\ca$ equipped with a natural isomorphism
$ \theta\colon [\![ fg ]\!] \Rarr\cong [\![ gf ]\!] $ for any pair $f\colon a\To b$
and $g\colon b\To a$ of cyclically composable 1-morphisms, satisfying two hexagon
and two triangle identities \cite{ponto,poSh} (compare also \cite{fScS}).
If the natural isomorphism $\theta$ is an involution, then the categorified 
trace is called \emph{symmetric}, or also a \emph{shadow}.
An important example of a symmetric categorified trace is provided by the 
\emph{topological Hochschild homology} (THH) of spectral categories 
\Cite{Thm.\,2.17}{caPo2}. Recently \cite{bermJ,heRa}, the notion of 
topological Hochschild homology was extended to that of a bicategory. The THH of
a bicategory $\cb$ is a category $\mathrm{THH}(\cb)$ given by a pseudocolimit of 
a certain 2-truncated cyclic bar construction. As shown in \Cite{Thm.\,3.19}{heRa},
$\mathrm{THH}(\cb)$ is canonically endowed with the structure of a
\emph{universal shadow} on $\cb$, in the sense that for every category $\cd$
there is a canonical equivalence 
  \be
  \mathrm{Fun}(\mathrm{THH}(\cb),\cd) \rarr~ \mathcal{S}\mathrm{ha}(\cb,\cd)
  \ee
of categories, where $\mathcal{S}\mathrm{ha}(\cb,\cd)$
is the category of shadows on $\cb$ with values in $\cd$. We expect
that the cylinder category $\ocyl(\cb,S^{1})$ provides a non-symmetric
analogue of $\mathrm{THH}(\cb)$ -- the \emph{topological cyclic homology} of 
the bicategory $\cb$. More specifically, we expect that the non-symmetric 
categorified traces obtained this way are universal, and that the cylinder 
categories over the circle are pseudocolimits of a variant of the cyclic bar 
construction in which the simplicial category $\varDelta$ is replaced by the 
cyclic category $\varDelta C$ that was introduced in \cite{conn5}.
\end{rem}

%%%%%%%%%%%%%%%%%%%%%%%%%%%%%%%%%%%%%%%%%%%%%%%%%%%%%%%%%%%%%%%%%%%%%%%%

\Subsection{Idempotent completion} \label{sec:kar}

The \emph{idempotent completion}, or \emph{Karoubi envelope}, of a category $\ca$
is the category $\kar(\ca)$ whose objects of are the idempotents in $\ca$, while
a morphism $f\iN\kar(\ca)(p_1,p_2)$ between the idempotents $p_1\iN\eend_{\ca}(a_1)$
and $p_2\iN\eend_{\ca}(a_2)$ is a morphism $f\colon a_1\To a_2$ in $\ca$ satisfying 
$f\cir p_1 \eq f \eq p_2\cir f$. The identity on an idempotent 
$p\iN\eend_{\ca}(a)$ viewed as an object in $\kar(\ca)$ is $p$ itself, while 
$\id_a$ is not in $\eend_{\kar(\ca)}(p)$ unless $p \eq \id_a$. The Karoubi 
envelope $\kar(\ca)$ comes with a canonical fully faithful functor 
$K_\ca \colon \ca \Rarr~ \kar(\ca)$ that sends an object $a\iN\ca$ to the 
idempotent $\id_a$. The functor $K_\ca$ is universal among the functors from 
$\ca$ that have a chosen \emph{splitting} for every idempotent in their codomain. 
A crucial further property of $K_\ca$ is that it is \emph{cofinal} in the sense\,%
 \footnote{~We adopt the terminology of \cite{BOrc1} and \cite{LUri}. 
 This differs from the one in \cite{MAcl} and \cite{JOhnP}, where
 such a functor is instead called a \emph{final} functor.} 
that we can restrict functors from $\kar(\ca)$ to functors
from $\ca$ along $K_\ca$ without changing their colimits (compare
\cite[Lemma\,5.1.4.6]{LUri} for the $(\infty,1)$-version of this result).

In the present subsection we introduce idempotent completions of the cylinder categories,
to which we refer as \emph{Karoubified cylinder categories}, and define string-net
spaces whose boundary data are objects in these Karoubified cylinder categories. 
This is in line with the common practice in skein theoretic quantum topology to
study various kinds of completions. For instance, one considers the free
cocompletions of the skein categories -- the higher-dimensional analogues of 
cylinder categories that are defined for surfaces -- to recover the locally finitely 
presentable factorization homology \cite{cooke2} and provide geometric models for 
quantum character varieties when applied to quantum groups \cite{bebJ}.
In dimension (2,1), for the case that $\cb$ is the delooping $\cbc$ of a 
spherical fusion category $\cc$, the Karoubified cylinder category over the 
standard circle is canonically equivalent to the Drinfeld center $\mathcal Z(\cc)$
of the fusion category $\cc$ \cite{kirI24}, and the corresponding Karoubified 
string-net construction extends to a (3,2,1)-dimensional topological field theory 
that is equivalent to the Turaev-Viro state-sum model for $\cc$ (\cite{bart8};
see also \cite{goos} for an approach based on the presentation of 
$\mathcal B\mathrm{ord}_{3,2,1}^{\mathrm{or}}$ that was conjectured in \cite{bdsv}.) 
In the present paper, a major motivation for performing the idempotent completion 
of cylinder categories is that in the application of string-net models to the 
construction of correlators of rational conformal field theories, field objects 
are naturally realized as objects in the Karoubified cylinder categories
(for details, see Chapter 3 of \cite{fusY} and Section \ref{sec:SNC} below).

\begin{defn} \label{def:KarSN}
Let $\ell$ be a compact oriented (not necessarily closed) 1-manifold,
and let $(\cb,*_{\cb})$ a pointed strictly pivotal bicategory. 
 \begin{itemize}
 \item [{\rm(i)}]
The \emph{Karoubified cylinder category} for $(\cb,*_{\cb})$ over
$\ell$ is the Karoubi envelope 
  \be
  \cyl(\cb,*_{\cb},\ell) \coloneqq \kar(\ocyl(\cb,*_{\cb},\ell)) \,.
  \ee
Thus an object $\msB\iN\cyl(\cb,*_{\cb},\ell)$ is an idempotent
$\msB\colon\omsB\To\omsB$ in the ordinary cylinder category $\ocyl(\cb,*_{\cb},\ell)$;
we call it a \emph{thickened $(\cb,*_{\cb})$-boundary datum} on $\ell$. 
 \item [{\rm(ii)}]
The \emph{Karoubified string-net space} $\snb(\surf,\msB)$ for a compact 
oriented surface $\surf$ and a thickened $(\cb,*_{\cb})$-boundary datum 
$\msB\iN\cyl(\cb,*_{\cb},\partial\surf)$ is the subspace 
  \be
  \snb(\surf,\msB) \coloneqq \osnb(\surf,\omsB)^{\msB} \subset\osnb(\surf,\omsB)
  \label{eq:snb}
  \ee
consisting of string nets that are invariant under concatenation with the idempotent
$\msB$. 
 \end{itemize}
\end{defn}

Note that, since $\partial\surf$ has empty boundary, the cylinder category over 
$\partial\surf$ actually does not depend on the distinguished object, i.e.\
$\cyl(\cb,*_{\cb},\partial\surf) \eq \cyl(\cb,\partial\surf)$. Thus 
with Definition \ref{def:KarSN} we obtain a functor 
  \be
  \snb(\surf,-) \Colon \cyl(\cb,\partial\surf) \rarr~ \vct
  \ee
for every compact oriented surface $\surf$. Note that, just like $\ocyl(\cb,*_{\cb},-)$,
also the assignment $\cyl(\cb,*_{\cb},-)$ is functorial with respect to the
embeddings of 1-manifolds.

%%%%%%%%%%%%%%%%%%%%%%%%%%%%%%%%%%%%%%%%%%%%%%%%%%%%%%%%%%%%%%%%%%%%%%%%

\Subsection{Factorization} \label{sec:fact}

Recall, e.g.\ from \Cite{Def.\,2.1}{fusY}, the description of the symmetric monoidal 
bicategory $\bbord$ of \emph{two-dimensional open-closed bordisms}: An object
$\alpha\iN\bbord$ is a finite disjoint union of copies of the standard interval 
$I \eq [0,1] \,{\subset}\, \mathbb R$ (oriented from $1$ to $0$) and the standard 
circle $S^1 \,{\subset}\, \complex$ (oriented counterclockwise).  A 1-morphism
between objects $\alpha$ and $\beta$ -- referred to as an open-closed bordism, or 
just \emph{bordism}, for short, and to be denoted by
$\surf\colon\alpha\ptO\beta$ -- consists of an underlying
compact oriented surface $\surf$, an in-going parametrization 
$\phi_{-}\colon\bar{\alpha} \,{\hookrightarrow}\, \partial\surf$ and an
out-going parametrization $\phi_{+}\colon\beta \,{\hookrightarrow}\, \partial\surf$.

The functoriality of $\ocyl(\cb,*_{\cb},-)$ and of $\cyl(\cb,*_{\cb},-)$ under 
embeddings of 1-manifolds that we established in Section \ref{sec:func-embed} implies

\begin{lem}
Let $(\cb,*_{\cb})$ be a pointed strictly pivotal bicategory. To any bordism 
$\surf\colon\alpha\ptO\beta$ there is naturally associated a $\fk$-linear profunctor
  \be
  \cosnb(\surf;-,\sim)\Colon \ocyl(\cb,*_{\cb},\alpha)\PtO\ocyl(\cb,*_{\cb},\beta) \,,
  \label{eq:SNoprofunctor}
  \ee
as well as a Karoubified version
  \be
  \csnb(\surf;-,\sim) \Colon\cyl(\cb,*_{\cb},\alpha) \PtO \cyl(\cb,*_{\cb},\beta)
  \ee
of $\cosnb(\surf;-,\sim)$.
\end{lem}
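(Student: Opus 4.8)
The plan is to realize the profunctor as the sewing functor $\osnb(\surf,-)$ precomposed with the functor that the boundary parametrizations of the bordism induce on cylinder categories. First I would note that the in- and out-going parametrizations assemble into a single orientation preserving embedding $\phi_{-}\sqcup\phi_{+}\colon\bar{\alpha}\sqcup\beta\,{\hookrightarrow}\,\partial\surf$, whose image is the constrained part of the boundary and whose complement consists of the free boundary arcs (these occur precisely when $\alpha$ or $\beta$ has interval components). The pointing is exactly what makes the construction go through on this complement: as in Section \ref{sec:func-embed}, applying $\ocyl(\cb,*_{\cb},-)$ to an embedding colors the complement of the image with the distinguished object $*_{\cb}$.

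Concretely, by the symmetric monoidal functoriality of $\ocyl(\cb,*_{\cb},-)$ under embeddings (Proposition \ref{prop:cyl-funct}), the embedding $\phi_{-}\sqcup\phi_{+}$ yields a functor
\[
\ocyl(\cb,*_{\cb},\bar{\alpha})\Times\ocyl(\cb,*_{\cb},\beta)\,\simeq\,\ocyl(\cb,*_{\cb},\bar{\alpha}\sqcup\beta)\rarr~\ocyl(\cb,*_{\cb},\partial\surf),
\]
where the first identification is monoidality and the target uses that $\partial\surf$ is closed, so that $\ocyl(\cb,*_{\cb},\partial\surf)\eq\ocyl(\cb,\partial\surf)$. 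Invoking the orientation reversal isomorphism $\ocyl(\cb,*_{\cb},\alpha)^{\op}\iso\ocyl(\cb,*_{\cb},\bar{\alpha})$ from the end of Section \ref{sec:func-embed} and postcomposing with the sewing functor $\osnb(\surf,-)\colon\ocyl(\cb,\partial\surf)\To\vct$ from the Remark after Definition \ref{def:oCylS}, I would define
\[
\cosnb(\surf;-,\sim)\Colon\ocyl(\cb,*_{\cb},\alpha)^{\op}\Times\ocyl(\cb,*_{\cb},\beta)\rarr~\vct
\]
to be the composite. Unwinding the definitions, this sends a pair $(\overline{\msb},\msb')$ to $\osnb(\surf,\msb)$, with $\msb$ the boundary datum glued from $\msb^{\vee}$ on $\bar{\alpha}$, from $\msb'$ on $\beta$, and from $*_{\cb}$ on the free arcs, and it acts on morphisms by sewing the corresponding cylinders onto $\surf$. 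Being a composite of functors it is a functor, hence a profunctor $\ocyl(\cb,*_{\cb},\alpha)\PtO\ocyl(\cb,*_{\cb},\beta)$; the contravariance in the $\alpha$-slot is absorbed into the orientation reversal isomorphism, so no separate check of variance is needed.

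For the Karoubified version I would extend $\cosnb(\surf;-,\sim)$ along idempotent completion. Since idempotents split in a product componentwise, one has $\kar(\mathcal{C}^{\op}\Times\mathcal{D})\eq\kar(\mathcal{C})^{\op}\Times\kar(\mathcal{D})$, and because $\vct$ is idempotent complete every functor to $\vct$ extends, uniquely up to natural isomorphism, along the inclusion into its Karoubi envelope (cf.\ the cofinality discussion of $K_{\ca}$ in Section \ref{sec:kar}). Applying this to $\cosnb(\surf;-,\sim)$ and restricting yields
\[
\csnb(\surf;-,\sim)\Colon\cyl(\cb,*_{\cb},\alpha)^{\op}\Times\cyl(\cb,*_{\cb},\beta)\rarr~\vct.
\]
Its value on a pair of thickened boundary data $(\overline{\msB},\msB')$ is the invariant subspace $\snb(\surf,\msB)$ of Definition \ref{def:KarSN}(ii), with $\msB$ the idempotent on $\partial\surf$ assembled from $\overline{\msB}$ and $\msB'$; this concrete description makes the compatibility of $\csnb$ with $\cosnb$ transparent.

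Since all the heavy lifting --- functoriality of $\osnb(\surf,-)$ under sewing of cylinders and of $\ocyl(\cb,*_{\cb},-)$ under embeddings --- has already been done, I do not expect a genuine obstacle; the construction is essentially a currying of the sewing functor. The one point deserving care is the matching of orientations and variances, namely that the in-going boundary should contribute contravariantly and the out-going boundary covariantly. This is handled entirely by the canonical isomorphism $\ocyl(\cb,*_{\cb},\ell)^{\op}\iso\ocyl(\cb,*_{\cb},\bar{\ell})$ guaranteed by strict pivotality, together with the compatibility of $\ocyl(\cb,*_{\cb},-)$ with disjoint unions, so in the end it requires only careful bookkeeping rather than a new idea.
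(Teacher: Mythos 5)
Your construction of $\cosnb(\surf;-,\sim)$ is exactly the paper's: the paper defines it as the composite of $(\phi_-)_*\,{\sqcup}\,(\phi_+)_*$ (the functors induced by the boundary parametrizations, with the orientation-reversal isomorphism $\ocyl(\cb,*_{\cb},\alpha)^{\op}\,{\cong}\,\ocyl(\cb,*_{\cb},\bar\alpha)$ absorbing the contravariance, just as you argue) with the sewing functor $\osnb(\surf,-)$, so on this main point the two arguments coincide.

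The only divergence is the Karoubified version, which the paper dispatches with ``defined analogously'' --- meaning: compose the embedding-induced functors on the Karoubified cylinder categories (whose functoriality under embeddings is noted after Definition \ref{def:KarSN}) with the functor $\snb(\surf,-)\colon\cyl(\cb,\partial\surf)\To\vct$. You instead extend $\cosnb(\surf;-,\sim)$ along the idempotent completion, using $\kar(\cc^{\op}\Times\cd)\,{=}\,\kar(\cc)^{\op}\Times\kar(\cd)$ and the fact that $\vct$ has split idempotents. This is a legitimate alternative and yields the same profunctor: the extension is unique up to natural isomorphism, the paper's $\csnb$ restricts to $\cosnb$ along the canonical embeddings (since $\snb(\surf,\id_{\msb})\,{=}\,\osnb(\surf,\msb)$), and your identification of the value on thickened boundary data as the invariant subspace $\snb(\surf,\msB)$ is precisely the image of the idempotent, i.e.\ the paper's Definition \ref{def:KarSN}(ii). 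What your route buys is that the Karoubified version requires no new geometric input at all; what the paper's route buys is an explicit description on all of $\cyl$ without invoking a universal property.
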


\begin{proof}
The profunctor $\cosnb(\surf;-,\sim)$ is given by the composite 
  \be
  \ocyl(\cb,*_{\cb},\alpha)^{\op} \Times \ocyl(\cb,*_{\cb},\beta)
  \xrightarrow{~(\phi_{-})_*^{}\,\sqcup\,(\phi_{+})_*^{}~}
  \ocyl(\cb,*_{\cb},\partial\surf) \xrightarrow{~\osnb(\surf,-)~} \vct 
  \ee
of the functor $\osnb(\surf,-)$ with the in- and out-going parametrizations of 
$\partial\surf$ (here we abbreviate $\ocyl(\cb,*_{\cb},\phi)$ by $\phi_*$). 
The Karoubified version is defined analogously.
\end{proof}

We are now ready to state the following result, which appears to be a variant 
of a folk theorem (see e.g.\ \cite{walk4,gujs,kiTha}):

\begin{thm} \label{thm:osnfct}
Let $\surf\colon\alpha\,{\sqcup}\,\beta \ptO \beta\,{\sqcup}\,\gamma$
be a bordism, with $\alpha,\beta,\gamma\iN\bbord$. Then the family 
  \be
  \big\{ s_{-,\msb_{0},\sim}^{\surf}\colon \cosnb(\surf;-,\msb_{0},\msb_{0},\sim)
  \,{\Longrightarrow}\, \cosnb(\cup_{\beta}\surf;-,\sim) \big\}
  _{\msb_0^{}\in\ocyl(\cb,*_{\cb},\beta)}
  \label{eq:cosndinat}
  \ee
of natural transformations whose members are given by the sewing 
of string nets, 
is dinatural and exhibits the functor $\cosnb(\cup_{\beta}\surf;-,\sim)$ as the coend 
  \be
  \int^{\msb\in\ocyl(\cb,*_{\cb},\beta)}\! \cosnb(\surf;-,\msb,\msb,\sim) \Colon
  \ocyl(\cb,*_{\cb},\alpha)\ptO\ocyl(\cb,*_{\cb},\gamma) \,.
  \ee
\end{thm}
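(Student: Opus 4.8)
The plan is to recognize the sewing maps as a universal cowedge by constructing an explicit inverse that cuts graphs along the gluing seam. Throughout I fix, once and for all, a collar neighbourhood $\beta\Times[-1,1]$ of the image of the seam along which $\surf$ is sewn to itself inside $\cup_{\beta}\surf$.

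First I would check that the family \eqref{eq:cosndinat} is a cowedge, i.e.\ dinatural, so that the universal property of the coend supplies a canonical map $\Psi$ \emph{from} the coend \emph{to} $\cosnb(\cup_{\beta}\surf;-,\sim)$. A morphism $\psi\colon\msb_{0}\To\msb_{0}'$ in $\ocyl(\cb,*_{\cb},\beta)$ is a string net on the cylinder $\beta\Times I$, and the profunctor structure acts on the two copies of $\beta$ by concatenating this cylinder. The dinaturality hexagon then collapses to the geometric assertion that sewing $\psi$ onto the \emph{in-going} copy of $\beta$ before gluing yields the same class in $\cosnb(\cup_{\beta}\surf;-,\sim)$ as sewing it onto the \emph{out-going} copy: after gluing, both prescriptions deposit the string net $\psi$ inside the collar $\beta\Times[-1,1]$, and the two placements differ by an isotopy supported in the collar. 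Since isotopic graphs represent the same string net, \eqref{eq:cosndinat} is a cowedge.

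Next I would build the comparison map $\Phi$ in the opposite direction. Given a $\cb$-colored graph $\grph$ representing a string net on $\cup_{\beta}\surf$, isotope it to be transversal to the seam and to meet it away from the vertices; cutting along $\beta$ produces a graph $\grph|_{\surf}$ on $\surf$ whose legs on the two copies of $\beta$ match and hence determine a boundary datum $\msb_{\grph}$, and I set $\Phi([\grph])$ to be the class of $[\grph|_{\surf}]\iN\cosnb(\surf;-,\msb_{\grph},\msb_{\grph},\sim)$ in the $\msb_{\grph}$-summand of the coend. All the content lies in well-definedness. Two transversal representatives of one string net on $\cup_{\beta}\surf$ are joined by finitely many isotopies and local relations; a move disjoint from the collar alters $\grph|_{\surf}$ only by isotopies and local relations on $\surf$, hence fixes the class inside a single summand $\cosnb(\surf;-,\msb_{\grph},\msb_{\grph},\sim)$, whereas a move dragging an edge across $\beta$ changes the intersection datum precisely by the action of a cylinder morphism, which is exactly the identification the coend imposes. \emph{The main obstacle} is a local-relation disk $\varphi(D)$ that straddles the seam: here I would use that $D$ is contractible and $\beta\Times[-1,1]$ is a collar to isotope $\varphi$, rel the part of $\grph$ outside a slightly larger disk, so that $\varphi(D)$ is pushed entirely to one side of the seam, realizing the null-graph relation on $\cup_{\beta}\surf$ as a null-graph relation on $\surf$ that already holds in the relevant summand. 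Dually, bringing an arbitrary representative into transversal position alters the intersection number with $\beta$ only by finger moves absorbed by the coend relations. Granting these two reductions, $\Phi$ descends to the coend.

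Finally I would verify that $\Phi$ and $\Psi$ are mutually inverse: cutting a sewn graph along $\beta$ returns the original, giving $\Phi\cir\Psi \eq \id$, while sewing a transversally cut graph reconstructs the original string net on $\cup_{\beta}\surf$, giving $\Psi\cir\Phi \eq \id$; both identities hold on representatives and pass to the quotients by locality and isotopy-invariance of the graphical calculus. Hence the cowedge \eqref{eq:cosndinat} is universal and exhibits $\cosnb(\cup_{\beta}\surf;-,\sim)$ as the stated coend. Should the hands-on bookkeeping of transversality-changing moves become unwieldy, the structural alternative I would pursue is to invoke the colimit description of Theorem \ref{thm:SN-colim} and prove that cutting along $\beta$ defines a cofinal functor from the graph category of $\cup_{\beta}\surf$ to the Grothendieck-type category indexing the coend of the graph-evaluation functors $\ce_{\cb}^{\surf,\msb}$, so that the two colimits agree on the nose.
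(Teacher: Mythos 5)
Your proposal is correct and follows essentially the same route as the paper: dinaturality is established by the same geometric observation (acting by a cylinder morphism on either copy of $\beta$ deposits it in a collar of the seam, up to isotopy), and universality rests on the same cut-along-$\beta$ construction, with well-definedness coming from the fact that changes of transversal representative -- isotopies across the seam and local relations, including disks straddling the seam -- are absorbed exactly by the cylinder-morphism identifications that the coend imposes. The only difference is presentational: you build an explicit two-sided inverse between $\cosnb(\cup_{\beta}\surf;-,\sim)$ and the coend object, whereas the paper verifies the universal property directly by defining, for an arbitrary dinatural family $\{g_{\msb_0}\}$ with values in $V$, the map $[\grph]\mapsto g_{\msb_{\grph}}([\mathrm{cut}(\grph)])$; since a map out of the coend is the same datum as a dinatural family, these are equivalent formulations of one and the same argument.
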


\begin{proof}
We need to show that the family \ref{eq:cosndinat} is dinatural, and that it is
universal among dinatural families of the same type. Dinaturality 
holds by the fact that sewing a morphism $F$ in the cylinder category 
$\ocyl(\cb,*_{\cb},\beta)$ to the first slot (along the appropriate 1-manifold) 
of the profunctor and then sewing along $\beta$ yields the same result as sewing 
$F$ to the the second slot and then doing the final sewing.
To show universality, consider a dinatural family 
  \be
  \big\{ g_{\msb_{0}} \colon \cosnb(\surf;\msa_{0},\msb_{0},\msb_{0},\msc_{0})
  \rarr~ V \big\}_{\msb_{0}\in\ocyl(\cb,*_{\cb},\beta)}
  \ee
for arbitrary boundary data $\msa_{0}\iN\ocyl(\cb,*_{\cb},\alpha)$
and $\msc_{0}\iN\ocyl(\cb,*_{\cb},\gamma)$ and an arbitrary $\fk$-vector space
$V\iN\vct$. Define the linear map
  \be
  \begin{aligned}
  g\Colon\cosnb(\cup_{\beta}\surf;\msa_{0},\msc_{0}) & \longrightarrow V \,,
  \\
  [\grph] & \longmapsto[\mathrm{cut}(\grph)]
  \longmapsto g_{\msb_{\grph}}([\mathrm{cut}(\grph)]) \,,
  \label{eq:gcutgrph}
  \end{aligned}
  \ee
where $\grph$ is a representative graph that intersects $\beta$ only at edges 
and any such intersection is transversal (such a representative can be chosen
without loss of generality), $\mathrm{cut}(\grph)$ is the fully colored graph
on $\surf$ obtained by cutting the representative graph $\grph$ along $\beta$,
and $\msb_{\grph}\iN\ocyl(\cb,*_{\cb},\beta)$ is the boundary datum arising 
from the cut. The scenario is illustrated by the following schematic example:
  \be
  \includegraphics[valign=c]{./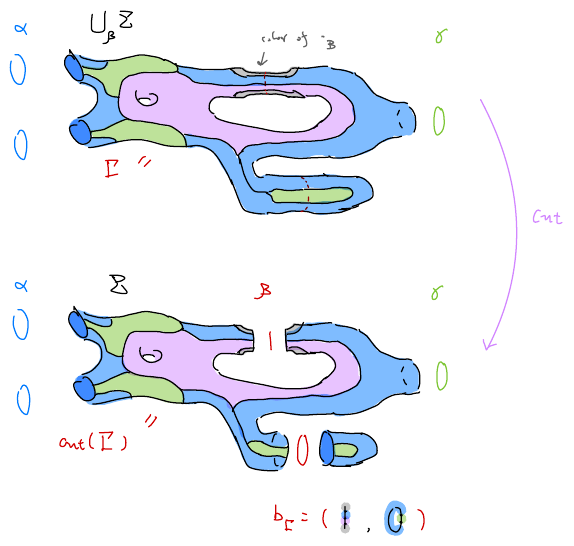}
 %\incfig{FCT0}
  \ee
The linear map \eqref{eq:gcutgrph} is well-defined because the family 
$\{g_{\msb_{0}}\}_{\msb_{0}\in\ocyl(\cb,*_{\cb},\beta)}$ is dinatural by assumption 
and because all relevant isotopies as well as the generating local relations 
provided by the graphical calculus on disks are contained within embedded disks,
which implies that a different choice of representative for the string-net 
$[\grph]\iN\cosnb(\cup_{\beta}\surf;\msa_{0},\msc_{0})$
only differs by the action of some morphism in the cylinder category
$\ocyl(\cb,*_{\cb},\beta)$.
 \\
Uniqueness of the map \eqref{eq:gcutgrph} is guaranteed  by design. Hence the
dinatural family at each component does exhibit a coend at the level of vector
spaces and linear maps. By a standard argument the so defined component-wise 
coends automatically combine to a coend at the level of linear functors
and linear natural transformations.
\end{proof}

An analogous statement holds for the Karoubified string-net functors. This
is achieved by the following observation. Recall that the 
\emph{twisted arrow category} $\tw(\text{\ensuremath{\ca}})$ associated with
a category $\ca$ is the category whose objects are the morphisms of $\ca$ and
for which a morphism from $f\colon a\To b$ to $f'\colon a'\To b'$ is a pair 
$(g,h)\iN\ca^{\op}(a,a')\Times\ca(b,b')$ such that the square 
  \be
  \begin{tikzcd}
  a & b \\
  {a'} & {b'}
  \arrow["f", from=1-1, to=1-2] \arrow["g", from=2-1, to=1-1]
  \arrow["h", from=1-2, to=2-2] \arrow["{f'}"', from=2-1, to=2-2] 
  \end{tikzcd}
  \ee
commutes. The category $\tw(\ca)$ comes with a canonical projection functor 
  \be
  \pi^{}_{\ca}\Colon \tw(\ca) \to \ca^{\op}\Times\ca \,,
  \ee
which sends an object $f\colon a\To b$ to $(a,b)\iN\ca^{\op}\Times\ca$
and keeps the morphisms as they are. The relevance of this construction
to us is that the coend of a functor $F\colon \ca^{\op}\Times\ca \To \vct$
can be identified with a \emph{colimit} over the twisted arrow category,
according to
  \be
  \int^{a\in\ca}\! F(a,a) = \mathrm{colim}\big( \tw(\ca^{\op})^{\op}
  \xrightarrow{~\pi_{\ca^{\op}}^{\op}~} \ca^{\op}\Times\ca
  \xrightarrow{~~F~~} \vct \big) \,,
  \label{eq:coendcolimTW}
  \ee
see e.g.\ \cite[Sect.\,1.2]{LOreg}.

\begin{lem} \label{lem:coendKar}
For $\ca$ a category, let $F\colon\kar(\ca)^{\op}\Times\kar(\ca)\To\vct$ be
a functor whose coend exists. Then 
  \be
  \int^{a\in\ca}\! F|_{\ca}^{}(a,a) = \int^{A\in\kar(\ca)}\! F(A,A) \,,
  \label{eq:coendKar}
  \ee
where $F|_{\ca}\colon\ca^{\op}\Times\ca \Rarr~ \kar(\ca)^{\op}\Times\kar(\ca)
\Rarr{~F\,} \vct$ is the restriction of $F$ along the canonical embedding.
\end{lem}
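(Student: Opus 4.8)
The plan is to reduce the asserted identity of coends to a \emph{cofinality} statement about a functor between twisted arrow categories, so that the cofinality property of the Karoubi inclusion recalled in Section~\ref{sec:kar} can be brought to bear (albeit one level up, on twisted arrow categories rather than on $\ca$ itself). First I would rewrite both sides as colimits by means of the identification \eqref{eq:coendcolimTW}. This presents $\int^{A\in\kar(\ca)}F(A,A)$ as the colimit of $F\cir\pi_{\kar(\ca)^{\op}}^{\op}$ over $\tw(\kar(\ca)^{\op})^{\op}$, and $\int^{a\in\ca}F|_{\ca}(a,a)$ as the colimit of $F|_{\ca}\cir\pi_{\ca^{\op}}^{\op}=F\cir(K_{\ca}^{\op}{\Times}K_{\ca})\cir\pi_{\ca^{\op}}^{\op}$ over $\tw(\ca^{\op})^{\op}$. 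Since $K_{\ca}$ is a functor it induces a functor $\tw(K_{\ca}^{\op})^{\op}\colon\tw(\ca^{\op})^{\op}\To\tw(\kar(\ca)^{\op})^{\op}$ on twisted arrow categories, and as the projections $\pi$ manifestly commute with $K_{\ca}$, the second diagram is precisely the first precomposed with $\tw(K_{\ca}^{\op})^{\op}$. Hence the two colimits agree as soon as $\tw(K_{\ca}^{\op})^{\op}$ is cofinal.

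Using the standard natural isomorphism $\tw(\ce^{\op})\iso\tw(\ce)^{\op}$ I would identify $\tw(K_{\ca}^{\op})^{\op}$ with the functor $\tw(K_{\ca})\colon\tw(\ca)\To\tw(\kar(\ca))$ induced directly by $K_{\ca}$, reducing the problem to showing that $\tw(K_{\ca})$ is cofinal. By the comma-category criterion this means: for every object $\phi\colon D_{1}\To D_{2}$ of $\tw(\kar(\ca))$ the comma category $\phi\,{\downarrow}\,\tw(K_{\ca})$ is nonempty and connected. Unwinding the definitions, its objects are arrows $f\colon a\To b$ of $\ca$ equipped with a morphism $\phi\To K_{\ca}f$ in $\tw(\kar(\ca))$, that is, a pair $(u\colon K_{\ca}a\To D_{1},\,v\colon D_{2}\To K_{\ca}b)$ with $K_{\ca}f=v\cir\phi\cir u$; and a morphism $(f,u,v)\To(f',u',v')$ is a pair $(\xi\colon a'\To a,\,\zeta\colon b\To b')$ with $u'=u\cir K_{\ca}\xi$ and $v'=K_{\ca}\zeta\cir v$.

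The key input is that every object $D$ of $\kar(\ca)$ is a retract of $K_{\ca}a$ for a suitable $a\iN\ca$: writing $D=(a,p)$ with $p$ an idempotent, $p$ itself furnishes maps $s\colon D\To K_{\ca}a$ and $r\colon K_{\ca}a\To D$ with $r\cir s=\id_{D}$. Applying this to $D_{1}=(a_{1},p_{1})$ and $D_{2}=(a_{2},p_{2})$ and using that $K_{\ca}$ is fully faithful, the composite $s_{2}\cir\phi\cir r_{1}\colon K_{\ca}a_{1}\To K_{\ca}a_{2}$ equals $K_{\ca}f_{0}$ for a unique $f_{0}\colon a_{1}\To a_{2}$, yielding an object $O_{0}=(f_{0},r_{1},s_{2})$; so the comma category is nonempty. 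For connectedness I would prove that $O_{0}$ is weakly initial. Given an arbitrary object $(f,u,v)$, full faithfulness produces $\bar u\iN\ca(a,a_{1})$ and $\bar v\iN\ca(a_{2},b)$ with $K_{\ca}\bar u=s_{1}\cir u$ and $K_{\ca}\bar v=v\cir r_{2}$; the retract identities $r_{1}\cir s_{1}=\id_{D_{1}}$ and $r_{2}\cir s_{2}=\id_{D_{2}}$ then give $u=r_{1}\cir K_{\ca}\bar u$ and $v=K_{\ca}\bar v\cir s_{2}$, as well as $f=\bar v\cir f_{0}\cir\bar u$ (checked by transporting across $K_{\ca}$). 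Thus $(\bar u,\bar v)$ is a well-defined morphism $O_{0}\To(f,u,v)$, so every object admits a map from $O_{0}$ and the comma category is connected.

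This establishes the cofinality of $\tw(K_{\ca})$, hence of $\tw(K_{\ca}^{\op})^{\op}$, and therefore the equality \eqref{eq:coendKar} of the two colimits; existence of one coend guarantees existence of the other by the same token. I expect the only genuinely delicate point to be the connectedness of the comma categories: nonemptiness is immediate from the retract description, whereas connectedness requires the explicit weakly initial object built from the splitting data $s,r$, together with careful bookkeeping of the op-conventions when passing between $\tw(\ce^{\op})^{\op}$ and $\tw(\ce)$.
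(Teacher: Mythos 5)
Your overall strategy coincides with the paper's (rewrite both coends as colimits over twisted arrow categories via \eqref{eq:coendcolimTW}, reduce to cofinality of the comparison functor induced by $K_\ca$, and verify connectivity of comma categories by producing a distinguished object out of the idempotent splittings), but there is a genuine error in the op-bookkeeping, at precisely the point you yourself flag as delicate. The standard isomorphism is $\tw(\ce^{\op})\,{\cong}\,\tw(\ce)$, \emph{not} $\tw(\ce^{\op})\,{\cong}\,\tw(\ce)^{\op}$: with the convention (used in the paper and by you) that a morphism $f\To f'$ in $\tw(\ce)$ is a factorization $f'\eq h\cir f\cir g$, a morphism $f\To f'$ in $\tw(\ce^{\op})$ unwinds in $\ce$ to a factorization $f'\eq g\cir f\cir h$ --- again a factorization of $f'$ through $f$, merely with the two outer maps interchanged --- so the identity on objects together with the swap $(g,h)\mapsto(h,g)$ gives a \emph{covariant} isomorphism $\tw(\ce^{\op})\,{\cong}\,\tw(\ce)$. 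A concrete counterexample to your claim: for $\ce\eq[1]$ one has $\tw(\ce^{\op})\,{\cong}\,\tw(\ce)$ equal to the cospan category $\bullet\,{\to}\,\bullet\,{\leftarrow}\,\bullet$, while $\tw(\ce)^{\op}$ is the span category $\bullet\,{\leftarrow}\,\bullet\,{\to}\,\bullet$; these are not isomorphic.

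As a consequence, $\tw(K_\ca^{\op})^{\op}$ gets identified with $\tw(K_\ca)^{\op}$ rather than with $\tw(K_\ca)$, so what the lemma requires is cofinality of $\tw(K_\ca)^{\op}$, equivalently that $\tw(K_\ca)$ is \emph{initial} (limit-final): the slice comma categories $\tw(K_\ca)\,{\downarrow}\,\phi$, whose objects are arrows $f$ of $\ca$ together with factorizations $\phi\eq v\cir K_\ca f\cir u$ of $\phi$ \emph{through} $K_\ca f$, must be connected. What you verify instead is connectivity of the coslices $\phi\,{\downarrow}\,\tw(K_\ca)$, whose objects are factorizations $K_\ca f\eq v\cir\phi\cir u$ of $K_\ca f$ through $\phi$. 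Your verification is correct in itself and proves that $\tw(K_\ca)$ is colimit-cofinal, but that property is not the relevant one, since by \eqref{eq:coendcolimTW} the coend is a colimit over $\tw(\kar(\ca)^{\op})^{\op}\,{\cong}\,\tw(\kar(\ca))^{\op}$ and not over $\tw(\kar(\ca))$; neither of the two finality properties implies the other. The gap is fixable by dualizing your last step: in the slice $\tw(K_\ca)\,{\downarrow}\,\phi$ the object given by the underlying $\ca$-morphism of $\phi$ together with the splitting maps $s_1\colon D_1\To K_\ca a_1$ and $r_2\colon K_\ca a_2\To D_2$ is weakly \emph{terminal}, an arbitrary object $(f,u,v)$ mapping to it via the $\ca$-morphisms underlying $u$ and $v$. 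Up to the op-identifications this dual verification is exactly the proof in the paper, where the canonical object $(g\To Gg)$ is weakly initial in $g\,{\downarrow}\,G$ and supplies the connecting spans.
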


\begin{proof}
Consider the functor 
  \be
  \begin{aligned}
  G\Colon \tw(\ca^{\op})^{\op} & \,\longrightarrow\, \tw(\kar(\ca)^{\op})^{\op} \,,
  \\
  (a\xleftarrow{~f~}b) & \,\longmapsto\, (\id_{a}\xleftarrow{~f~}\id_{b}) \,.
  \end{aligned}
  \ee
Owing to the commutativity of the square 
  \be
  \begin{tikzcd}[column sep=3.4em,row sep=2.8em]
  {\mathrm{Tw}(\mathcal{A}^\mathrm{op})^\mathrm{op}}
  & {\mathrm{Tw}(\mathrm{Kar}(\mathcal{A})^\mathrm{op})^\mathrm{op}}
  \\
  {\mathcal{A}^\mathrm{op}\Times\mathcal{A}}
  & {\mathrm{Kar}(\mathcal{A})^\mathrm{op}\Times\mathrm{Kar}(\mathcal{A})}
  \arrow["G", from=1-1, to=1-2]
  \arrow["{\pi_{\mathrm{Kar}(\mathcal{A})^{\mathrm{op}}}^{\mathrm{op}}}", from=1-2, to=2-2]
  \arrow[from=2-1, to=2-2]
  \arrow["{\pi_{\mathcal{A}^{\mathrm{op}}}^{\mathrm{op}}}"', from=1-1, to=2-1] 
  \end{tikzcd}
  \ee
it suffices to show that $G$ is cofinal. Indeed, if that is the case, we have 
  \begin{align}
  \int^{a\in\ca}\! F^{}|_{\ca}(a,a) 
  & = \mathrm{colim}\big( \tw(\ca^{\op})^{\op} \rarr{\pi_{\ca^{\op}}^{\op}}
  \ca^{\op}\Times\ca \rarr{~~} \kar(\ca)^{\op}\Times\kar(\ca) \rarr{\,F\,} \vct \big)
  \nonumber \\
  & = \mathrm{colim}\big( \tw(\ca^{\op})^{\op} \rarr{\,G\,} \tw(\kar(\ca)^{\op})^{\op}
  \nonumber \\
  & \hspace{3.4cm} \rarr{\pi_{\kar(\ca)^{\op}}^{\op}}
  \kar(\ca)^{\op}\Times\kar(\ca) \rarr{\,F\,} \vct \big)
  \nonumber \\
  & =\mathrm{colim}\big( \tw(\kar(\ca)^{\op})^{\op} \rarr{\pi_{\kar(\ca)^{\op}}^{\op}}
  \kar(\ca)^{\op}\Times\kar(\ca) \rarr{\,F\,} \vct \big)
  \nonumber \\[3pt]
  & =\int^{A\in\kar(\ca)}\! F(A,A) \,,
  \end{align}
where cofinality of $G$ is used in the third equality.
 \\[2pt]
To show that the functor $G\colon\tw(\ca^{\op})^{\op}\To\tw(\kar(\ca)^{\op})^{\op}$
is cofinal, we use the fact that this is the case iff for every object 
$(p\Rarr{~g~}q)\iN\tw(\kar(\ca)^{\op})^{\op}$,
with $p\iN\eend_{\ca}(a)$ and $q\iN\eend_{\ca}(b)$ idempotents,
the comma category $g\,{\downarrow}\, G$ is connected, i.e.\ it is non-empty and for
each pair of objects in $g\,{\downarrow}\, G$ there exists a zigzag connecting 
them. To see that the latter conditions are satisfied, first note that the square 
  \be
  \begin{tikzcd}
  p & q 
  \\
  {\mathrm{id}_a} & {\mathrm{id}_b} 
  \arrow["g"', from=1-2, to=1-1] \arrow["p", from=2-1, to=1-1]
  \arrow["q", from=1-2, to=2-2] \arrow["g", from=2-2, to=2-1] 
  \end{tikzcd}
  \ee
commutes, due to the defining condition for $g$ to be a morphism
of type $q\To p$. This commutative square provides us with an object
$(g\To Gg)\iN g \,{\downarrow}\, G$, hence $g\,{\downarrow}\, G$ is non-empty.
Next assume that we have a pair of objects $g\To Gf$ and $g\To Gh$
in the comma category $g\,{\downarrow}\, G$, given by the two commutative squares
  \be
  \begin{tikzcd}
  p & q & \raisebox{-2.1em}{and} & p & q 
  \\[-13pt]
  {\mathrm{id}_c} & {\mathrm{id}_d} & {} & {\mathrm{id}_{c'}} & {\mathrm{id}_{d'}}
  \arrow["g"', from=1-2, to=1-1] \arrow["r", from=2-1, to=1-1]
  \arrow["s", from=1-2, to=2-2]  \arrow["f", from=2-2, to=2-1]
  \arrow["g"', from=1-5, to=1-4] \arrow["{r'}", from=2-4, to=1-4]
  \arrow["{s'}", from=1-5, to=2-5] \arrow["h", from=2-5, to=2-4]
  \end{tikzcd}
  \ee
Observe that the diagram 
  \be
  \begin{tikzcd}
  & {\mathrm{id}_c} && {\mathrm{id}_d}
  \\
  && {\mathrm{id}_a} && {\mathrm{id}_b}
  \\
  p && q 
  \\
  & {\mathrm{id}_{c'}} && {\mathrm{id}_{d'}}
  \arrow["q"{description}, from=3-3, to=2-5]
  \arrow["{r'}"{description}, from=4-2, to=3-1]
  \arrow["{s'}"{description}, from=3-3, to=4-4]
  \arrow["h"{description}, color={rgb,255:red,92;green,92;blue,214}, from=4-4, to=4-2]
  \arrow["g"{description}, color={rgb,255:red,92;green,92;blue,214}, from=2-5, to=2-3]
  \arrow["s"{description}, color={rgb,255:red,92;green,92;blue,214}, from=2-5, to=1-4]
  \arrow["{s'}"{description, pos=0.7}, color={rgb,255:red,92;green,92;blue,214}, from=2-5, to=4-4]
  \arrow["{r'}"{description, pos=0.3}, color={rgb,255:red,92;green,92;blue,214}, from=4-2, to=2-3]
  \arrow["r"{description}, color={rgb,255:red,92;green,92;blue,214}, from=1-2, to=2-3]
  \arrow["r"{description, pos=0.3}, from=1-2, to=3-1]
  \arrow[from=uu, crossing over, "s"{description, pos=0.7}, from=3-3, to=1-4]
  \arrow["f"{description}, color={rgb,255:red,92;green,92;blue,214}, from=1-4, to=1-2]
  \arrow[from=uu, crossing over, "g"{description}, from=3-3, to=3-1]
  \arrow["p"{description}, from=2-3, to=3-1] 
  \end{tikzcd}
  \ee
commutes and hence the two squares that do not involve the objects $p$ and $q$ 
(and in the color version are drawn in violet) give rise to a span 
  \be
  (g\To Gf) \longleftarrow (g\To Gg) \longrightarrow (g\To Gh)
  \ee
in the comma category $g\,{\downarrow}\, G$.
It follows that for every $g\iN\tw(\kar(\ca)^{\op})^{\op}$ the comma category
$g\,{\downarrow}\, G$ is connected. Hence the functor
$G\colon\tw(\ca^{\op})^{\op}\to\tw(\kar(\ca)^{\op})^{\op}$ is cofinal, as claimed.
\end{proof}

Combining Theorem \ref{thm:osnfct} with Lemma \ref{lem:coendKar} we arrive at

\begin{cor} \label{cor:snfct}
Let $\surf\colon\alpha\,{\sqcup}\,\beta \ptO \beta\,{\sqcup}\,\gamma$
be a bordism, with $\alpha,\beta,\gamma\iN\bbord$. Then the family 
  \be
  \big\{ \widehat{s}_{-,\msb_{0},\sim}^{\surf}\colon
  \csnb(\surf;-,\msB_{0},\msB_{0},\sim)\,{\Rightarrow}\,
  \csnb(\cup_{\beta}\surf;-,\sim) \big\}_{\msB_{0}\in\cyl(\cb,*_{\cb},\beta)}
  \label{eq:cosndinat-1}
  \ee
whose members are given by the sewing of string nets, is dinatural and exhibits
the functor $\csnb(\cup_{\beta}\surf;-,\sim)$ as the coend 
  \be
  \int^{\msB\in\cyl(\cb,*_{\cb},\beta)}\! \csnb(\surf;-,\msB,\msB,\sim) \Colon
  \cyl(\cb,*_{\cb},\alpha) \ptO \cyl(\cb,*_{\cb},\gamma) \,.
  \ee
\end{cor}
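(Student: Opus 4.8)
The plan is to fix the outer boundary data and reduce the statement, component by component, to the non-Karoubified factorization of Theorem~\ref{thm:osnfct}, using Lemma~\ref{lem:coendKar} to pass between the coend over $\cyl(\cb,*_{\cb},\beta)$ and the coend over $\ocyl(\cb,*_{\cb},\beta)$. Concretely, I would first fix arbitrary thickened boundary data $\msA\iN\cyl(\cb,*_{\cb},\alpha)$ and $\msC\iN\cyl(\cb,*_{\cb},\gamma)$ and regard
\[
  F_{\msA,\msC}\Colon \cyl(\cb,*_{\cb},\beta)^{\op}\Times\cyl(\cb,*_{\cb},\beta)
  \rarr~\vct, \qquad (\msB_1,\msB_2)\longmapsto\csnb(\surf;\msA,\msB_1,\msB_2,\msC),
\]
as a functor on $\kar(\ca)^{\op}\Times\kar(\ca)$ with $\ca\eq\ocyl(\cb,*_{\cb},\beta)$. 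Applying Lemma~\ref{lem:coendKar} to $F_{\msA,\msC}$ then rewrites the target coend as
\[
  \int^{\msB\in\cyl(\cb,*_{\cb},\beta)}\!\!\! \csnb(\surf;\msA,\msB,\msB,\msC)
  = \int^{\msb\in\ocyl(\cb,*_{\cb},\beta)}\!\!\! \csnb(\surf;\msA,\msb,\msb,\msC),
\]
where on the right $\msb$ is read via the canonical embedding as an identity idempotent.

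Next I would identify the integrand on the right. By Definition~\ref{def:KarSN} the space $\csnb(\surf;\msA,\msb,\msb,\msC)$ is the subspace of $\cosnb(\surf;\omsA,\msb,\msb,\omsC)$ cut out by concatenation with $\msA$ on $\alpha$ and $\msC$ on $\gamma$; since the idempotents on the two $\beta$-slots are identities, it is exactly the image of the idempotent $\msA\Otk\msC$ acting on $\cosnb(\surf;\omsA,\msb,\msb,\omsC)$. Because $\msA$ and $\msC$ are sewn onto boundary components disjoint from $\beta$, this idempotent is a (di)natural endomorphism of the functor $\msb\,{\mapsto}\,\cosnb(\surf;\omsA,\msb,\msb,\omsC)$, so it commutes with the formation of the coend: the coend of its image is the image of the induced idempotent on $\int^{\msb}\cosnb(\surf;\omsA,\msb,\msb,\omsC)$. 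By Theorem~\ref{thm:osnfct} this coend is $\cosnb(\cup_{\beta}\surf;\omsA,\omsC)$, and, since sewing $\msA$ and $\msC$ commutes with sewing along $\beta$, the induced idempotent is again concatenation by $\msA\Otk\msC$. Taking its image returns $\csnb(\cup_{\beta}\surf;\msA,\msC)$, which is the asserted value.

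To finish I would verify that the dinatural family \eqref{eq:cosndinat-1} of sewing maps is precisely the universal cocone produced by these identifications; this is immediate, since every isomorphism used above is natural in $\msA$, $\msC$ and $\msb$ and is realized geometrically by sewing, so that dinaturality and universality are transported from Theorem~\ref{thm:osnfct}. Finally I would reassemble the component-wise coends (indexed by $\msA$ and $\msC$) into a coend at the level of profunctors by the same standard argument as at the end of the proof of Theorem~\ref{thm:osnfct}.

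The main obstacle will be the middle step: justifying that passing to the $\msA\Otk\msC$-invariants commutes with the coend. This rests on the fact that a split idempotent is an absolute (co)limit, so that the coend, being a colimit, preserves the retract, together with the geometric observation that the idempotent it induces on the sewn surface $\cup_{\beta}\surf$ is once more concatenation by $\msA\Otk\msC$. Everything else is the bookkeeping needed to feed $F_{\msA,\msC}$ into Lemma~\ref{lem:coendKar} and to match the restriction $\csnb(\surf;\msA,\msb,\msb,\msC)$ with the $\msA\Otk\msC$-image of the bare functor $\cosnb$.
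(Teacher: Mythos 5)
Your proof is correct and follows essentially the same route as the paper's: both reduce the Karoubified statement to Theorem~\ref{thm:osnfct} by identifying $\csnb$ with the image of the concatenation idempotent inside $\cosnb$, commuting that idempotent splitting with the coend, and invoking Lemma~\ref{lem:coendKar} to pass between the coend over $\ocyl(\cb,*_{\cb},\beta)$ and the one over $\cyl(\cb,*_{\cb},\beta)$. The only differences are cosmetic: you run the chain of identifications in the opposite direction (from the coend to the sewn string-net space rather than vice versa) and you spell out, via the absoluteness of split idempotents, the step that the paper's ``direct calculation'' leaves implicit.
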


\begin{proof}
By direct calculation we have
  \be
  \begin{aligned}
  \csnb(\cup_{\beta}\surf;,\msA_{0},\msC_{0}) 
  & =\, \cosnb(\cup_{\beta}\surf;,\msA_{0}^{\circ},\msC_{0}^{\circ})^{(\msA_{0},\msC_{0})}
  \\[3pt]
  & = \int^{\msb\in\ocyl(\cb,*_{\cb},\beta)}\!
  \cosnb(\surf;\msA_{0}^{\circ},\msb,\msb,\msC_{0}^{\circ})^{(\msA_{0},\msC_{0})}
  \\
  & = \int^{\msb\in\ocyl(\cb,*_{\cb},\beta)}\!
  \csnb(\surf;\msA_{0},\id_{\msb},\id_{\msb},\msC_{0})\\
  & \!\!\! \equ{eq:coendKar} \int^{\msB\in\cyl(\cb,*_{\cb},\beta)}\!
  \csnb(\surf;\msA_{0},\msB,\msB,\msC_{0})
  \end{aligned}
  \ee
for every $\msA_0\iN\cyl(\cb,*_{\cb},\alpha)$ and $\msC_0\iN\cyl(\cb,*_{\cb},\gamma)$.
\end{proof}

%%%%%%%%%%%%%%%%%%%%%%%%%%%%%%%%%%%%%%%%%%%%%%%%%%%%%%%%%%%%%%%%%%%%%%%%

\Subsection{Open-closed modular functors from bicategorical string nets}
\label{sec:open-closed}

We are now going to show that bicategorical string nets provide us with modular
functors. With the application to conformal field theory in mind (see Section
\ref{sec:SNC}, and in particular Remark \ref{rem:theend}), we are interested
in \emph{open-closed} modular functors, for which the 1-manifolds that are objects 
of the domain bicategory may have a non-empty boundary. In Section \ref{sec:oCylI}
we have already prepared the ground for dealing with this case by defining cylinder 
categories not only over circles, but also over intervals.

\begin{defn}
An \emph{open-closed} modular functor is a symmetric monoidal pseudofunctor 
  \be
  \bbord \rarr~ \bprof
  \ee
from the symmetric monoidal bicategory $\bbord$ of two-dimensional open-closed 
bordisms to the symmetric monoidal bicategory of $\bprof$ of $\fk$-linear profunctors.
\end{defn}

The objects of the bicategory $\bbord$ are finite disjoint unions of copies of the 
standard circle $S^1 \eq \{z \,|\, |z|\eq 1\} \,{\subset}\, \complex$ and of the 
standard interval $I \eq [0,1] \,{\subset}\, \mathbb R \,{\subset}\, \complex$, the
1-morphisms are bordisms between such 1-manifolds, and the 2-morphisms are isotopy 
classes of diffeomorphisms; for more details on $\bbord$ see e.g.\ Definition 2.1 of 
\cite{fusY}. The bicategory $\bprof$ is defined as follows:\,%
 \footnote{~The definition used here deviates somewhat from the one in \cite{fusY}.}
 \Itemize
 \item 
Objects of $\bprof$ are small categories enriched in the cocomplete category
$\vct$ of (not necessarily finite-dimensional) $\fk$-vector spaces.
 \item 
For objects $A$ and $B$, a 1-morphism $P\colon A\PtO B$ is a
\emph{$\fk$-linear profunctor} from $A$ to $B$, that is, a $\fk$-linear
functor $P\colon A^{\op}\Times B\Rarr{~~}\vct$.
 \item
For 1-morphisms $P,\,Q\colon A\PtO B$, a 2-morphism 
 $\varphi\colon P\,{\xRightarrow{~~}}\, Q$
is a natural transformation of the underlying functors.
 \item
The composite of a composable pair 
 $\!\!\begin{tikzcd}[column sep=1.5em] A\! & \!B\! & \!C
 \arrow["Q", "\shortmid"{marking}, from=1-2, to=1-3]
 \arrow["P", "\shortmid"{marking}, from=1-1, to=1-2] \end{tikzcd}\!\!$
of 1-morphisms is the coend\,%
  \be
  P\cdo Q \,\coloneqq\, \int^{b\in B}\! P(-,b)\otk Q(b,\sim) \,\colon~~ A\PtO C \,.
  \ee
The horizontal composition of 2-morphisms is induced by the composition of 1-morphisms.
(The respective coends exist because all the domain categories are small and the target 
categories are cocomplete, see e.g. \cite[Prop.\,4.5.3]{RIch}.)
 \item 
Vertical composition is given by the vertical composition of natural transformations.
 \item 
The monoidal structure given by the Cartesian product of $\fk$-linear categories,\,%
 \footnote{~The appropriate notion of Cartesian product for $\fk$-linear categories
 is the one provided by the framework of enriched categories, i.e.\ for small
 $\fk$-linear categories $A$ and $B$, $A\Times B$ is the small $\fk$-linear
 category whose objects are ordered pairs and whose morphism spaces are obtained as
 tensor products over $\fk$, i.e.\ $(A\Times B)(a_{1}{\times} b_{1},a_{2}{\times} b_{2})
 \coloneqq A(a_{1},a_{2})\otk B(b_{1},b_{2})$ (see e.g.\ \cite[Sect.\,1.4]{KEll}, 
 where this product is instead referred to as \emph{tensor product}).}
with the obvious symmetric braiding.
\end{itemize}

A specific implication of Theorem \ref{thm:osnfct} is that for any pair of 
composable bordisms $\surf\colon\alpha\ptO\beta$ and $\surf'\colon\beta\ptO\gamma$
the dinatural family of sewing maps exhibits the structure of a coend on
  \be
  \begin{aligned}
  \cosnb(\surf \,{\cup_{\beta}}\, \surf';-,\sim)
  & =\int^{\msb\in\ocyl(\cb,*_{\cb},\beta)}\!
  \cosnb(\surf\,{\sqcup}\,\surf';-,\msb,\msb,\sim)
  \\
  & =\int^{\msb\in\ocyl(\cb,*_{\cb},\beta)}\!
  \cosnb(\surf;-,\msb)\otk\cosnb(\surf';\msb,\sim) \,.
  \end{aligned}
  \ee
An analogous statement follows from Corollary \ref{cor:snfct} for the
Karoubified string-net spaces.
This immediately implies

\begin{thm} \label{thm:thm}
Let $(\cb,*_{\cb})$ be a pointed strictly pivotal bicategory. Then the assignments
  \be
  \alpha\mapsto\ocyl(\cb,*_{\cb},\alpha) \qquad \text{and} \qquad 
  \surf\mapsto\cosnb(\surf;-,\sim)
  \ee
extend to an \emph{open-closed modular functor}, i.e.\ a symmetric monoidal 
pseudofunctor 
  \be
  \cosnb\Colon \bbord\rarr~\bprof
  \ee
from the symmetric monoidal bicategory of open-closed bordisms to the
symmetric monoidal bicategory of $\mathbbm{k}$-linear profunctors.
Similarly, the Karoubified cylinder categories and string-net spaces give rise to 
another open-closed modular functor 
  \be
  \csnb\Colon \bbord\rarr~\bprof \,.
  \ee
\end{thm}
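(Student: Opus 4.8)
The plan is to assemble the asserted pseudofunctor out of the structural results already in hand and to check that the various constraints organize into a symmetric monoidal pseudofunctor, with essentially all of the content carried by the factorization result of Theorem \ref{thm:osnfct}. First I would fix the assignment on cells. On objects, $\cosnb$ sends a $1$-manifold $\alpha$ to the cylinder category $\ocyl(\cb,*_{\cb},\alpha)$, which is an object of $\bprof$ since it is a small $\fk$-linear category. On $1$-morphisms, a bordism $\surf\colon\alpha\ptO\beta$ goes to the profunctor $\cosnb(\surf;-,\sim)\colon\ocyl(\cb,*_{\cb},\alpha)\PtO\ocyl(\cb,*_{\cb},\beta)$ supplied by the Lemma preceding Theorem \ref{thm:osnfct}. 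On $2$-morphisms, an isotopy class of orientation-preserving diffeomorphisms is sent to the natural transformation obtained by pushing forward string nets; that this descends to string-net spaces and is well defined follows from the fact (recorded in Section \ref{sec:BSNspaces}) that isotopic graphs represent the same string net, together with the functoriality of cylinder categories under embeddings (Proposition \ref{prop:cyl-funct}).

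Next I would produce the compositor and unitor. For composable bordisms $\surf\colon\alpha\ptO\beta$ and $\surf'\colon\beta\ptO\gamma$ the composite in $\bprof$ is by definition the coend
$$\cosnb(\surf)\cdo\cosnb(\surf')=\int^{\msb\in\ocyl(\cb,*_{\cb},\beta)}\cosnb(\surf;-,\msb)\otk\cosnb(\surf';\msb,\sim),$$
which Theorem \ref{thm:osnfct} identifies canonically, and naturally in the boundary data, with $\cosnb(\surf\cup_\beta\surf';-,\sim)$; this isomorphism is the compositor. For the unitor I would use that the identity bordism on $\alpha$ is the cylinder $\alpha\Times I$, whose associated profunctor is, by the very definition of morphisms in the cylinder category (Definitions \ref{def:oCylS} and \ref{def:oCylI}), the hom-profunctor of $\ocyl(\cb,*_{\cb},\alpha)$, i.e.\ the identity $1$-morphism of $\bprof$; hence the unitor is canonical.

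I would then exhibit the symmetric monoidal structure. Proposition \ref{prop:cyl-funct} (together with the identification around \eqref{eq:oCylnItoI}) yields canonical equivalences $\ocyl(\cb,*_{\cb},\alpha\sqcup\alpha')\simeq\ocyl(\cb,*_{\cb},\alpha)\Times\ocyl(\cb,*_{\cb},\alpha')$, matching disjoint union with the Cartesian product of $\fk$-linear categories. Since string nets on a disjoint union of surfaces factor as tensor products over $\fk$, the profunctor of a disjoint union of bordisms is the external product of the individual profunctors, so $\cosnb$ is strong symmetric monoidal, with the monoidal constraints supplied by these equivalences; the braiding is respected because both disjoint union and Cartesian product are symmetric and the factorization of string nets is manifestly so.

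Finally, the coherence axioms. The associativity constraint of the pseudofunctor reduces to a Fubini-type compatibility of iterated sewings: gluing along $\beta$ and then along $\gamma$ agrees with the reverse order, which holds because iterated coends commute and sewing along disjoint boundary components is order-independent. The unit coherences are immediate from the identification of a cylinder with an identity profunctor, and compatibility of the compositor with the monoidal and symmetric structures, as well as functoriality on $2$-morphisms, is inherited from the naturality built into Theorem \ref{thm:osnfct} and Proposition \ref{prop:cyl-funct}. I expect the main obstacle to be the bookkeeping for these coherences, especially the pseudofunctor associativity; but each condition is governed by the universal property of the relevant coend, so once the compositor is pinned down by Theorem \ref{thm:osnfct} the verifications are forced and reduce to standard coend manipulations. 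The Karoubified statement follows \emph{verbatim}, replacing Theorem \ref{thm:osnfct} by Corollary \ref{cor:snfct} and $\ocyl$ by $\cyl$ throughout.
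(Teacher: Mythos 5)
Your proposal is correct and takes essentially the same route as the paper: the paper deduces the theorem directly from the coend identification of Theorem \ref{thm:osnfct} (and Corollary \ref{cor:snfct} for the Karoubified case), which supplies the compositor, with the unitors, monoidal constraints and coherences being canonical. Your write-up simply makes explicit the bookkeeping that the paper compresses into ``this immediately implies.''
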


Now let $\cc$ be a modular fusion category that encodes the chiral data for
a rational conformal field theory. The results of Section \ref{sec:kar} then amount
to the statement that the open-closed modular functor $\csnc \eq \csnbc$ models 
the conformal blocks of the conformal field theory. In other words, by setting
  \be
  \blc(\alpha)\coloneqq\cyl(\cc,\alpha)
  \ee
for every $\alpha\iN\bbord$, and 
  \be
  \blc(\surf;-,\sim)\coloneqq\csnc(\surf;-,\sim)
  \ee
for every bordism $\surf$, we obtain 
an open-closed modular functor that when restricted to the closed sector
is isomorphic to the modular functor that is provided by the Turaev-Viro
state sum construction and that furnishes canonical equivalences 
$\blc(I) \,{\simeq}\, \cc$ and $\blc(S^1) \,{\simeq}\, \mathcal Z(\cc)$.
(For more details, see Chapter 3.3 of \cite{fusY}.)

\begin{rem}
For $\cc$ a modular fusion category, the categories $\blc(I) \eq \cyl(\cc,I) 
\,{\simeq}\, \cc$ and $\blc(S^{1}) \eq \cyl(\cc,S^{1}) \,{\simeq}\, \czc$ are 
finite and semisimple. As a consequence, for any bordism $\surf\colon\alpha\ptO\beta$
the functor $\blc(\surf;-,\sim) \,{=}\, \csnc(\surf;-,\sim)$ is exact in each of its 
variables. It can therefore be substituted by an exact functor $\widehat{\bl}_{\cc}
(\surf;-\,{\boxtimes}\,\sim) \colon \blc(\alpha)^{\op}\boti\blc(\beta)\To\vct$. 
Thus the open-closed modular functor $\blc$ factors through the forgetful functor 
$\mathcal{P}\mathrm{rof}_{\fk}^{\mathcal{L}\mathrm{ex}} \Rarr~ \bprof$,
where $\mathcal{P}\mathrm{rof}_{\fk}^{\mathcal{L}\mathrm{ex}}$ is the symmetric
monoidal bicategory of finite $\fk$-linear abelian categories, left exact profunctors
and natural transformations, the horizontal composition of which is given by left
exact coends (in the sense of \Cite{Sect.\,3.2}{fuSc23}), and with the monoidal 
product being the Deligne product.
We thus also have a modular functor in the sense of Definition 2.1 of \cite{fusY}, 
in which $\mathcal{P}\mathrm{rof}_{\fk}^{\mathcal{L}\mathrm{ex}}$ is taken as the
target category. (The motivation for making this choice in \cite{fusY} is that 
it suits a potential extension to non-semisimple modular tensor categories.
In the case of a modular fusion category $\cc$ considered here, the profunctors are
in fact both left and right exact.)
\end{rem}

We finally mention that not only $\csnc$, but also the open-closed modular functor
  \be
  \cosnfrc \Colon \bbord \rarr~ \bprof
  \ee
that is obtained by taking $(\cfrc,\tu)$ as the decorating pointed pivotal
bicategory is relevant to rational conformal field theory. This will
be explained in Section \ref{sec:SNC}.

%%%%%%%%%%%%%%%%%%%%%%%%%%%%%%%%%%%%%%%%%%%%%%%%%%%%%%%%%%%%%%%%%%%%%%%%

\section{Application: Universal correlators in RCFT} \label{sec:SNC}

The string-net construction of correlators for a two-dimensional rational
conformal field theory (RCFT) that was developed in \cite{fusY} provides a natural 
motivation for the bicategorical string-net construction that we presented in
Section \ref{sec:SNbicat}: Given a modular fusion category $\cc$ describing the chiral
data for the RCFT, a topological world sheet $\ws$ with physical boundaries and
topological defect lines -- a \emph{world sheet}, for short --
can be regarded as a $\cfrc$-colored string diagram
$\cws$ on its underlying surface $\sws$. In \cite{fusY}, we assigned to each 
world sheet $\ws$ an element $\corc(\ws)\iN\snc(\sws,\df_{\partial\sws}(\msb_{\ws}))$
in a (Karoubified) string-net space colored by the modular fusion category $\cc$, where 
$\msb_{\ws}$ is the boundary datum of the $\cfrc$-colored string diagram $\cws$, while
$\df_{\partial\sws} \colon \ocyl(\cfrc,\tu,\partial\sws) \Rarr~ \cyl(\cc,\partial\sws)$
is a canonical functor sending an $\cfrc$-colored boundary datum to an idempotent
in the $\cc$-colored cylinder category over the boundary $\partial\sws$ of $\sws$.
The  $\cc$-colored string-net space $\snc(\sws,\df_{\partial\sws}(\msb_{\ws}))$ 
can be regarded as the \emph{space of conformal blocks} for the world sheet 
$\ws$ (which only depends on the underlying surface $\sws$).
As shown in \cite{fusY}, the specific element $\corc(\ws)$ in this space of 
conformal blocks is invariant under the action of a subgroup $\mcg(\ws)$ of the 
mapping class group $\mcg(\sws)$ that is determined by the world sheet $\ws$, and 
the assignment $\ws\,{\mapsto}\,\corc(\ws)$ is compatible with sewing. Accordingly,
$\corc(\ws)$ is naturally interpreted as the \emph{correlator} for the world sheet 
$\ws$. 

An evident question to ask is whether the correlator $\corc(\ws)$ only depends on the 
equivalence class $[\cws]$ in the $\cfrc$-colored string-net space 
$\osnfrc(\sws,\msb_{\ws})$, i.e.\ whether two world sheets with the same underlying 
surface that are related by the local graphical calculus for the pivotal bicategory 
$\cfrc$ have the same correlator. In \cite{fusY} an affirmative answer to this 
question was given without proof. Making use of the tools developed in the 
previous section, below we give a complete proof of our claim.

%%%%%%%%%%%%%%%%%%%%%%%%%%%%%%%%%%%%%%%%%%%%%%%%%%%%%%%%%%%%%%%%%%%%%%%%

\Subsection{The string-net construction of RCFT correlators}

Let us briefly review the string-net construction of RCFT correlators. For 
details we refer to \cite{fusY} and \cite{yangYa3}.

In the categorical approach to conformal field theory, one first develops the
representation theory of the pertinent chiral vertex operator algebra 
$\mathfrak{V}$. If the chiral symmetries are sufficiently nice, the representation 
category of $\mathfrak{V}$ has the structure of a \emph{modular fusion category};
conformal field theories with such chiral symmetries are called \emph{rational}
CFTs, or RCFTs, for short. The modular fusion category $\cc$ of chiral data 
controls the monodromy behavior of the conformal blocks and gives rise to an
open-closed modular functor
  \be
  \blc\Colon \bbord \rarr~ \bprof \,.
  \ee
Conjecturally \cite{BAki}, this modular functor is equivalent to the modular 
functor furnished by the Reshetikhin-Turaev surgery topological field theory
for the Drinfeld center $\czc$ or, equivalently, \cite{TUvi,bals2}, by the
Turaev-Viro state-sum TFT for $\cc$.
The following result \cite{kirI24} allows us to adopt the string-net 
modular functor $\csnc\colon \bbord \Rarr~ \bprof$
as an alternative realization of the modular functor $\blc$:

\begin{thm} \label{thm:SN=TV}
Let $\cc$ be a spherical fusion category. The Karoubified string-net modular functor
$\snc$ is equivalent to the Turaev-Viro modular functor $\mathrm{TV}_{\cc}$.
\end{thm}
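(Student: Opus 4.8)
The plan is to produce a symmetric monoidal pseudonatural equivalence between the two open-closed modular functors $\snc$ and $\mathrm{TV}_{\cc}$, both regarded as symmetric monoidal pseudofunctors $\bbord\to\bprof$ (for $\snc$ this is the Karoubified open-closed modular functor $\csnbc$ of Theorem \ref{thm:thm}). Since a symmetric monoidal pseudofunctor out of $\bbord$ is determined up to equivalence by its values on the generating objects $I$ and $S^1$, on a generating family of bordisms, and on the generating mapping classes, subject to the gluing relations, the argument splits into three parts: (i) matching the categories assigned to $I$ and $S^1$; (ii) matching the profunctors assigned to elementary bordisms; and (iii) checking that these data assemble coherently, with the factorization theorems doing the bookkeeping.

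On objects I would argue as follows. For the interval, Proposition \ref{prop:ocylIEnd} gives $\ocyl(\cc,I)\simeq\mathrm{End}_{\cbc}(*)=\cc$, and since $\cc$ is fusion and hence idempotent complete, Karoubi completion changes nothing, so $\snc(I)=\cyl(\cc,I)\simeq\cc$. For the circle, the identification $\snc(S^1)=\cyl(\cc,S^1)\simeq\czc$ is exactly the result of \cite{kirI24}; one checks that this equivalence intertwines the $\mathrm{U}(1)$-action on the cylinder category described in Section \ref{sec:func-embed} with the rotation action underlying the modular functor, so that it respects the mapping class group actions. Both assignments agree with those of the Turaev-Viro modular functor.

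For bordisms the key input is the observation recorded after Definition \ref{def:osnb}, namely that for $\cb=\cbc$ our string-net spaces $\osnc(\surf,\msb)$ reduce to the classical $\cc$-colored string-net spaces of \cite{leWe,kirI24,bart8}; the same holds for the Karoubified version and the induced profunctors. I would then invoke the comparison of the Poincar\'e-dual trivalent graph of a triangulation, colored by the fusion data of $\cc$, with a string net: a Turaev-Viro state-sum vector is the value of the associated string net, sphericality guaranteeing that this value is well defined on all of $\surf$, and invariance under Pachner moves being precisely the local string-net relations. This identifies $\snc(\surf;-,\sim)$ with $\mathrm{TV}_{\cc}(\surf;-,\sim)$ on generators. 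Compatibility with composition of bordisms then follows from Theorem \ref{thm:osnfct} and Corollary \ref{cor:snfct}: both functors compute gluing as a coend over the cylinder category of the cut $1$-manifold, and the object-level equivalences intertwine the sewing maps, so the universal property of the coend upgrades the matching on generators to an equivalence on all composites.

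The main obstacle is step (iii), the coherence of the equivalence as a \emph{symmetric monoidal pseudonatural} transformation rather than a merely pointwise one: one must verify that the chosen equivalences on $I$ and $S^1$, together with the comparison on elementary bordisms, are simultaneously compatible with horizontal composition (the coend/sewing structure), with the disjoint-union monoidal product, and with the $2$-morphisms (mapping classes), and that the associativity and unit constraints of the two pseudofunctors are matched. Rather than verifying this by hand, the efficient route is to reduce to \cite{bart8}, which establishes that the Karoubified string-net construction extends to a $(3,2,1)$-dimensional topological field theory equivalent to the Turaev-Viro state-sum model for $\cc$; once our $\snc$ is identified with the string-net functor used there (via the reduction noted above and the cylinder-category identifications of \cite{kirI24}), the desired equivalence of modular functors is the restriction of that equivalence to the $(2,1)$-part, and the coherence is inherited. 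See also \cite{goos} for a complementary derivation.
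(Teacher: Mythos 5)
The paper offers no proof of this theorem at all: it is imported directly from the literature, attributed to \cite{kirI24}, with the coherence/extended-TQFT aspects addressed only in the surrounding remark via \cite{goos} and \cite{bart8}. Your outline --- identifying $\cyl(\cc,I)\simeq\cc$ and $\cyl(\cc,S^1)\simeq\czc$, matching Turaev-Viro state-sum vectors with string nets via Poincar\'e duality and Pachner moves, and inheriting the symmetric monoidal pseudonatural coherence from \cite{bart8} and \cite{kirI24} --- is exactly the standard argument established in those same references, so your proposal is consistent with, and no less complete than, the paper's own treatment.
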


\begin{rem}
In fact, the modular functor $\snc$ canonically extends to a $3$-$2$-$1$ 
topological field theory (in the sense of \cite{bdsv3}) that is 
isomorphic to the once-extended Turaev-Viro TFT for $\cc$
\cite{goos}. For these topological field theories to exist it is crucial
that the modular tensor category $\cc$ is a fusion category, i.e.\ semisimple.
The absence of any semisimplicity condition in the constructions in the previous 
sections may be taken as an indication that, in contrast, Theorem \ref{thm:SN=TV} 
admits a generalization to non-semisimple modular tensor categories.
\end{rem}

A world sheet $\ws$ is a stratified surface. The 1-cells in the interior of $\ws$ are
topological \emph{defect lines}, while the 1-cells on the geometric boundary
$\partial\ws$ are either \emph{sewing boundaries}, along which world sheets can be
sewn, or \emph{physical boundaries}. Further, $\ws$ comes with a decoration by
the pointed strictly pivotal bicategory $\cfrc$ of simple special symmetric 
Frobenius algebras (as described in Examples \ref{exa:cfrc} and \ref{exa:Frc*}) 
that is associated to the modular fusion category $\cc$: each 2-cell is colored 
by a \emph{phase} of the RCFT, which is an object of $\cfrc$; defect lines and 
physical boundaries are colored by 1-morphisms in $\cfrc$, and junctions of defect 
lines as well as junctions of defect lines and physical boundaries by 2-morphisms. 
We illustrate this decoration by the following example:

\begin{example} \label{exa:ws1}
The world sheet 
  \be
  \scalebox{1.4}{\tikzfig{WS1}}
  \label{eq:ws1}
  \ee
is decorated as follows: Its two 2-cells are colored with phases $A,B\iN\cfrc$, which 
we indicate by different shadings of the 2-cells (green and blue, respectively, in the
color version); its six line defects are colored by \emph{defect conditions} 
$X_1,X_2,X_3,X_5\iN A\bimodc B$, $X_4\iN A\bimodc A$ and $X_6\iN B\bimodc A$;
its two physical boundary segments are colored with \emph{boundary conditions} 
$M_{1}\iN\rmodc A \eq \tu\bimodc A$ and $M_{2}\iN\rmodc B \eq \tu\bimodc B$; finally, 
its three point defects are colored by $\varphi_{1}$, $\varphi_{2}$ and $\varphi_{3}$,
which (upon making the auxiliary choice of a polarization for each point defect
viewed as an internal vertex) are bimodule morphisms of appropriate type.
\end{example}

Owing to the presence of physical boundary segments, world sheets like the one
shown in the picture \eqref{eq:ws1} are not quite $\cfrc$-colored graphs on their
underlying surface in the sense defined at the beginning of Section \ref{sec:BSNspaces}:
These segments are colored edges contained in the boundary of the surface, and an
$\cfrc$-colored graph cannot have such edges. 
To remedy this problem, we replace the world sheet $\ws$ by another stratified surface
$\cws$ that does constitute an $\cfrc$-colored graph. In performing this replacement 
we are guided by the requirement to keep the correct space of conformal blocks, which 
means that the topology of the world sheet should not be altered, together with 
the observation that a physical boundary segment adjacent
to a 2-cell with phase label $A$ can equivalently be viewed as a line defect between
the phase $A$ and the \emph{trivial phase} -- the phase labeled with the trivial
Frobenius algebra $\tu\iN\cfrc$. This leads us to the following procedure for
turning a world sheet $\ws$ into an $\cfrc$-colored string diagram on a
surface $\cws$ that is homeomorphic to $\ws$ (for more details see
\Cite{Sect.\,3.4}{fusY} and \Cite{Sect.\,2.4}{yangYa3}):
If a geometric boundary circle $c$ of $\ws$ contains both physical boundary segments 
and sewing boundaries, we attach to each connected component of the union of
physical boundaries in $c$ a 2-cell that is homeomorphic to a disk and is
\emph{transparent} in the sense that it is colored by $\tu$, as illustrated in the
following picture: 
  \be
  \tikzfig{MB0}\qquad\longmapsto\qquad\tikzfig{MB1}
  \ee
If, on the other hand, $c$ is a \emph{pure physical boundary}, i.e.\
does not contain any sewing boundaries, then we attach to it a
transparent 2-cell that is homeomorphic to a cylinder, for instance
  \be
  \tikzfig{MB3}\qquad\longmapsto\qquad\tikzfig{MB4}.
  \ee
For any world sheet $\ws$ this prescription gives us a new stratified surface
$\cws$, to be referred to as the \emph{complemented world sheet}
of $\ws$. We denote the underlying surface of $\cws$, obtained by forgetting all
strata along with their labels, by $\sws$ and call it the \emph{ambient surface} 
of $\ws$. Note that there is a canonical embedding $\ws\,{\hookrightarrow}\,\sws$.
Strictly speaking, neither the complemented world sheet $\cws$ nor
the ambient surface $\sws$ are uniquely determined by our prescription.
However, given any two such constructions, there exists a unique homeomorphism
(up to isotopies) between the ambient surfaces that is compatible with the embeddings.

\begin{example}
For the world sheet of Example \ref{exa:ws1} the complemented world sheet looks as follows:
  \be
  \scalebox{1.4}{\tikzfig{WS2}}
  \ee
\end{example}

The complemented world sheet $\cws$ of a world sheet $\ws$ can be
regarded as an $\cfrc$-colored graph on the ambient surface $\sws$.
Denote by $\msb_{\ws}\iN\ocyl(\cfrc,\partial\surf)$ the boundary
datum of $\cws$. The \emph{field functor} \Cite{Sect.\,5.2}{yangYa3}
$\df_{\partial\sws}\colon\ocyl(\cfrc,\tu,\partial\sws) \Rarr~ \cyl(\cc,\partial\sws)$
for the 1-manifold $\partial\surf$ sends $\msb_{\ws}$ to an object
$\df_{\partial\sws}(\msb_{\ws})$ in the Karoubified cylinder category
$\cyl(\cc,\partial\sws)$. We define the \emph{space of conformal blocks
for the world sheet $\ws$} to be the Karoubified $\cc$-colored
string-net space for the pair $(\sws,\df_{\partial\sws}(\msb_{\ws}))$:
  \be
  \blc(\ws)\coloneqq\snc(\sws,\df_{\partial\sws}(\msb_{\ws})) \,.
  \ee

Now recall the rigid separable Frobenius functor $\cu\colon\cfrc \Rarr~ \cbc$
from Example \ref{exa:UFrCtoBC}. The correlator 
  \be
  \corc(\ws)\in\snc(\sws,\df_{\partial\sws}(\msb_{\ws}))
  \ee
for the world sheet $\ws$ is defined as the $\cc$-colored string-net that is 
represented by a $\cc$-colored graph on $\sws$ which is obtained by the following
two-step procedure: First we perform $\cu$-conjugation, as given in
Definition \ref{def:Fconj}), on $\cws$, that is, relabel the edges of $\cws$
with the underlying $\cc$-objects of the bimodules and relabel
the internal vertices with the $\cu$-conjugates of the bimodule morphisms;
the 2-cells are relabeled with the unique object $*\iN\cbc$ of the delooping
of $\cc$. This first step results in a $\cc$-colored graph $\grph_{\ws}$
on $\sws$, which we call the \emph{partial defect network for $\ws$}. In the second
step we add a \emph{full Frobenius graph} \Cite{Def.\,3.20}{fusY}
$\grph_{\vartheta}$ to each 2-cell of $\cws$. The correlator is then
the string-net equivalence class of the so obtained graph \Cite{Def.\,3.26}{fusY}:
  \be
  \corc(\ws)\coloneqq \big[ \grph_{\ws}\cup\!\bigcup_{\vartheta\in\cws}\!
  \grph_{\vartheta} \big] \,\in \snc(\sws,\df_{\partial\surf}(\msb_{\ws})) \,.
  \ee
The properties of special Frobenius algebras and their bimodules ensure that the string-net
correlator $\corc(\ws)$ is well defined and invariant under the action of $\mcg(\ws)$.
As shown in Theorem 3.28 of \cite{fusY}, the assignment $\ws\,{\longmapsto}\,\corc(\ws)$
gives a consistent system of correlators.

\begin{example}
As an illustration of the prescription, consider again the world sheet 
\eqref{eq:ws1}). A representative $\cc$-colored graph for the string net that 
gives the correlator $\corc(\ws)$ is shown in the following picture:
  \be
  \scalebox{1.4}{\tikzfig{WS3}}
  \ee
Here the unlabeled trivalent graphs (drawn in light green and light blue in the color
version) are implicitly colored with the corresponding Frobenius algebra $A$ and $B$ and
by their structure morphisms (using simplified graphical calculus, as explained in
Appendix A.8 of \cite{fusY}).
\end{example}

%%%%%%%%%%%%%%%%%%%%%%%%%%%%%%%%%%%%%%%%%%%%%%%%%%%%%%%%%%%%%%%%%%%%%%%%

\Subsection{Universal correlators}

Let $\surf$ be a compact oriented surface and $\msb\iN\ocyl(\cfrc,\partial\surf)$ an
$\cfrc$-boundary datum over $\partial\surf$. Denote by $\fk\mathsf{G}_{\cfrc}(\surf,\msb)$
the vector space freely generated by the set of $\cfrc$-colored graphs on $\surf$ with
boundary datum $\msb$ (every such graph can be viewed as a complemented world sheet).
The assignment $\cws\,{\mapsto}\,\corc(\surf,\msb)$ defines a linear map 
  \be
  \corc(\surf,\msb)\Colon \fk\mathsf{G}_{\cfrc}(\surf,\msb)
  \rarr~ \snc(\surf,\df_{\partial\surf}(\msb)) \,.
  \ee
Evaluating the map $\corc(\surf,\msb)$ on any vector of the distinguished basis 
$\mathsf{G}_{\cfrc}$ of $\fk\mathsf{G}_{\cfrc}$ yields a correlator.
In this sense, $\corc(\surf,\msb)$ collects correlators for
different world sheet structures on the surface $\surf$ with boundary datum $\msb$.

The following result, which was conjectured in \Cite{Sect.\,6.2}{fusY}, shows that
in fact some of these correlators coincide.

\begin{thm} \label{thm:universal}
For every compact oriented surface $\surf$ and every $\cfrc$-boundary datum 
$\msb\iN\ocyl 
       $\linebreak[0]$
(\cfrc,\partial\surf)$ there exists a unique $\mcg(\surf)$-intertwiner
  \be
  \ucorc(\surf,\msb)\Colon \osnfrc(\surf,\msb) \rarr~ \snc(\surf,\df_{\partial\surf}(\msb))
  \ee
such that the diagram 
  \be
  \begin{tikzcd}[column sep=4.4em,row sep=3.2em]
  {\mathbbm{k}\mathsf{G}_{\mathcal{F}r(\mathcal{C})}(\varSigma,\mathsf{b})}
  & {\mathrm{SN}_{\mathcal{C}}(\varSigma,\mathbb{F}_{\partial\varSigma}(\mathsf{b}))}
  \\
  {\mathrm{SN}^{\circ}_{\mathcal{F}r(\mathcal{C})}(\varSigma,\mathsf{b})}
  \arrow[two heads, from=1-1, to=2-1]
  \arrow["{\mathrm{Cor}_\mathcal{C}(\varSigma,\mathsf{b})\,}", from=1-1, to=1-2]
  \arrow["{\mathrm{UCor}_\mathcal{C}(\varSigma,\mathsf{b})}"', dotted, from=2-1, to=1-2]
  \end{tikzcd}
  \label{eq:defUcor}
  \ee
commutes, where $\fk\mathsf G_{\cfrc}(\surf,\msb)\,{\twoheadrightarrow}\,\osnfrc(\surf,\msb)$
is the canonical quotient map.
\end{thm}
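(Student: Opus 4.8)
The plan is to construct $\ucorc(\surf,\msb)$ out of the colimit presentation of the source rather than by hand on representatives. Uniqueness is immediate: the quotient map $\mathrm q(\surf,\msb)$ is surjective, so any linear map satisfying $\ucorc(\surf,\msb)\cir\mathrm q(\surf,\msb)=\corc(\surf,\msb)$ is forced on all of $\osnfrc(\surf,\msb)$. For existence I would invoke Theorem \ref{thm:SN-colim}, which identifies $\osnfrc(\surf,\msb)$ with $\colim\ce_{\cfrc}^{\surf,\msb}$. It then suffices to exhibit a cocone $\{\ell_{\ogrph}\colon\bigotimes_{v}H_v^{\cfrc}\To\snc(\surf,\df_{\partial\surf}(\msb))\}$ under $\ce_{\cfrc}^{\surf,\msb}$ whose leg at a partially colored graph $\ogrph$ sends a vertex coloring $\clc$ to the correlator $\corc(\ogrph_{\clc})$ of the fully colored graph $\ogrph_{\clc}$. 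Each such leg is linear in $\clc$, because the correlator is obtained by $\cu$-conjugating the vertex colors --- an operation linear in each 2-morphism --- and then adjoining a fixed family of full Frobenius graphs.

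The only nontrivial point is that these legs form a cocone, i.e.\ that for every generating morphism $\gamma\colon\ogrph_1\To\ogrph_2$ of $\gr_{\cfrc}(\surf,\msb)$ one has $\corc((\ogrph_1)_{\clc})=\corc((\ogrph_2)_{\ce_{\cfrc}^{\surf,\msb}(\gamma)(\clc)})$ in $\snc(\surf,\df_{\partial\surf}(\msb))$. Since $\gamma$ only alters the graph inside an embedded disk $\varphi(D)$ and all relations of the $\cc$-string-net space are local, this reduces to the disk statement: if a fully $\cfrc$-colored graph $\grph$ on $D$ has $\cfrc$-value $\clc=\langle\grph\rangle$, then the $\cc$-colored string net produced by $\cu$-conjugating $\grph$ and Frobenius-filling its patches coincides with the one produced from the corolla $K^{\msc}_{\clc}$. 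I would prove this by decomposing the evaluation computing $\clc$ into the elementary moves of Proposition \ref{prop:CoroDecom} and inducting over the decomposition. For moves of type (a) and (b) --- operadic compositions and partial traces --- the underlying graphs are connected, so Theorem \ref{thm:RSFconj} guarantees that $\cu$-conjugation already commutes with them, and the Frobenius mesh simply passes through the $\cc$-colored internal edges unchanged.

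The main obstacle is the behaviour under the remaining moves, horizontal products (c) and whiskerings (d), for which $\cu$-conjugation is \emph{not} natural: by the computation preceding \eqref{eq:RSFidem}, $(\alpha\hocO\beta)^{\cu}$ differs from $\alpha^{\cu}\hocO\beta^{\cu}$ by the idempotent \eqref{eq:RSFidem}. The key observation is that, by \eqref{eq:uconjidem} in Example \ref{exa:UFrCtoBC}, this idempotent is exactly the projector realizing $\otimes_B$ as a retract of $\otimes$, where $B$ is the special symmetric Frobenius algebra coloring the patch separating the two factors. Since that very patch carries a full Frobenius graph in the correlator, I would show that absorbing this projector into the adjacent Frobenius mesh leaves the $\cc$-string net invariant; this is where $\Delta$-separability and symmetry of $B$ enter, and the requisite local identities for full Frobenius graphs are those established in \cite{fusY}. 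Granting this absorption, the induction closes and the disk statement follows.

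With the cocone in hand, the universal property of the colimit yields a unique linear map $\ucorc(\surf,\msb)\colon\osnfrc(\surf,\msb)\To\snc(\surf,\df_{\partial\surf}(\msb))$, and evaluating its composite with $\mathrm q(\surf,\msb)$ on a generating graph reproduces $\corc(\surf,\msb)$ by construction, so the triangle \eqref{eq:defUcor} commutes. Finally, $\mcg(\surf)$-equivariance follows formally: both $\mathrm q(\surf,\msb)$ and $\corc(\surf,\msb)$ intertwine the mapping class group actions --- the latter by the invariance established in \cite{fusY} --- so that $\ucorc(\surf,\msb)\cir\xi\cir\mathrm q(\surf,\msb)=\xi\cir\ucorc(\surf,\msb)\cir\mathrm q(\surf,\msb)$ for every $\xi\in\mcg(\surf)$, whence surjectivity of $\mathrm q(\surf,\msb)$ yields $\ucorc(\surf,\msb)\cir\xi=\xi\cir\ucorc(\surf,\msb)$.
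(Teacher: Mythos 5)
Your proposal is correct and takes essentially the same route as the paper's proof: uniqueness is forced by surjectivity of the quotient map, and the substantive verification rests on exactly the two ingredients the paper uses, namely Theorem \ref{thm:RSFconj} (conjugation by the rigid separable Frobenius functor $\cu$ commutes with operadic compositions and partial traces) together with absorption of the separability idempotents \eqref{eq:uconjidem} into the full Frobenius graphs to compensate for horizontal products and whiskerings. Your only deviation is one of packaging: you obtain existence from the colimit description of Theorem \ref{thm:SN-colim} via a cocone, whereas the paper checks well-definedness directly on representatives modulo null graphs (choosing representatives of the correlators with a minimal set of Frobenius lines inside the disk); these amount to the same local-on-disks verification.
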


\noindent
We call this unique map $\ucorc(\surf,\msb)$ the \emph{universal correlator}
for the pair $(\surf,\msb)$.

\begin{proof}
(i) Every representative of a string net in $\osnfrc(\surf,\msb)$ can be regarded
as a complemented world sheet. Therefore commutativity of the triangle \eqref{eq:defUcor}
forces the universal correlator to be given by
  \be
  \begin{aligned}
  \ucorc(\surf,\msb)\Colon \osnfrc(\surf,\msb)
  & \rarr{\,~} \snc(\surf,\df_{\partial\surf}(\msb)) \,,
  \\
  [\cws] & \longmapsto \corc(\ws) \,. 
  \label{eq:defucor}
  \end{aligned}
  \ee
Thus we first need to show that this is indeed well defined as a linear map, i.e.\
that any two world sheets related by the graphical calculus on disks for the pivotal 
bicategory $\cfrc$ have the same correlator. To see this we assume, without loss of
generality, that $\cws_{1}$ and $\cws_{2}$ are two complemented world sheets which have
the same ambient surface $\surf$, are identical outside an embedded disk 
$D \,{\hookrightarrow}\, \surf$, and yield the same value, in the sense of 
\eqref{eq:calb12}, on the disk, i.e.\ 
$\langle\cws_{1}\cap D\rangle_{\cfrc} \eq \langle\cws_{2}\cap D\rangle_{\cfrc}$. 
Let $\corc(\ws_{1}) \eq [\grph_{1}]$ and $\corc(\ws_{2}) \eq [\grph_{2}]$
be the correlators for the corresponding world sheets, with the representative graphs
$\grph_{1}$ and $\grph_{2}$ chosen in a way such that they coincide outside $D$, and
such that for each of them there are no other Frobenius lines within $D$ apart from, for
each pair of distinct connected components (within $D$) of the partial defect network,
a single Frobenius line connecting the two components. Such a choice is possible 
because every $\cu$-conjugation of bimodule morphisms commutes with the action of the
relevant Frobenius algebras and because all the Frobenius graphs involved are \emph{full}.
It then remains to be shown that the equality $\langle\grph_1\,{\cap}\, D\rangle_{\cc} \eq 
\langle\grph_{2}\cap D\rangle_{\cc}$ holds. This is indeed the case, because the functor 
$\cu\colon\cfrc\Rarr~\cbc$ introduced in Example \ref{exa:UFrCtoBC} (with the canonical 
lax and oplax structures) is rigid separable Frobenius, so that $\cu$-conjugation 
preserves \emph{operadic compositions} and\emph{ partial trace maps} (see Theorem 
\ref{thm:RSFconj}), while the presence of the Frobenius lines which connect the connected
components (on the embedded disk $D$) of the partial defect networks compensates
for the fact that $\cu$-conjugation preserves \emph{horizontal products} and 
\emph{whiskerings} only up to idempotents of the type \eqref{eq:uconjidem}.
It is also readily clear that the linear map \eqref{eq:defucor} intertwines
the mapping class group actions.
 \\[2pt]
(ii) That the collection of universal correlators is compatible with
sewing translates exactly to the statement that the prescription of
string-net correlators is compatible with sewing.
\end{proof}

\begin{rem}
In \cite{fusY} we called the string net $[\cws]\iN\osnb(\sws,\msb_{\ws})$ a
\emph{quantum world sheet}. That the correlator $\corc(\ws)$ depends only on the
quantum world sheet $[\cws]$ implies the validity of the \emph{calculus of defects}
which is a crucial feature of conformal field theory: one is allowed to 
modify a world sheet $\ws$ locally according to the graphical calculus for the 
pivotal bicategory $\cfrc$ of defects without changing the value of its correlator 
$\corc(\ws)$. This feature is e.g.\ implicitly assumed in \cite{ffrs5}, where a less 
conceptual justification is given. In \cite{frmT} it is demonstrated
that the calculus of defects, in a higher-dimensional setting, is a powerful tool for
implementing categorical symmetries and should therefore be postulated
for \emph{every} reasonable quantum field theory with topological defects.
\end{rem}

Consider now a rigid separable Frobenius functor $F\colon\cb\Rarr~\cb'$ between two
arbitrary pivotal bicategories. Recall from Remark \ref{rem:FrobFrob} that the
lax and oplax structures of $F$ canonically provide for every object $a$ in the
domain bicategory $\cb$ a $\Delta$-separable symmetric Frobenius algebra $F(\id_{a})$
in the pivotal tensor category $\cb'(Fa,Fa)$, for every 1-morphism $f\colon a\Rarr~ b$
in $\cb$ an $F(\id_{a})$-$F(\id_{b})$-bimodule $F(f)\iN\cb'(Fa,Fb)$, 
and for every 2-morphism $\alpha$ in $\cb$ a bimodule morphism given by the 
$F$-conjugate $\alpha^F$ of $\alpha$. (As an example, for 1-morphisms 
$f\colon a\Rarr~ b$, $g\colon b\Rarr~ c$, $h\colon a\Rarr~ b'$ and
$k\colon b'\Rarr~ c$ in $\cb$, the $F$-conjugate of a 2-morphism 
$\alpha \iN \hom_{\cb(a,c)}(f\hocO g,h\hocO k)$ is a bimodule morphism
$\alpha^F \iN \hom_{\cb(Fa,Fc)}(Ff\,{\ot_{F\id_{b}}}Fg,Fh\,{\ot_{F\id_{b'}}}Fk)$.)

Just like in the case of string-net correlators, we can associate to any
$\cb$-colored graph on any oriented surface $\surf$ a $\cb'$-colored graph (and thus
a string net) on $\surf$ by performing a change of color prescribed by the 2-functor
$F$ accompanied by adding a full Frobenius graph in each 2-cell of the embedded 
graph. This is demonstrated schematically by the following figure:
  \be
  \tikzfig{SNRSF0} \quad\longmapsto\quad \tikzfig{SNRSF1}
  \ee
in which the Frobenius graphs are labeled with the images of identity
1-morphisms and lax/oplax functoriality and unit constraints. Moreover,
by an argument analogous to the one that yields Theorem \ref{thm:universal},
such a transformation of colored graphs descends to the level of string nets. We
then obtain the following result which is parallel to Theorem \ref{thm:osnb-posnb}:

\begin{thm} \label{thm:B2B'}
Let $\surf$ be a compact oriented surface, $\cb$ and $\cb'$ two
strictly pivotal bicategories, $F\colon\cb\Rarr~\cb'$ a rigid separable
Frobenius functor, and $\msb$ a $\cb$-boundary datum on $\partial\surf$.
There is a canonical $\mcg(\surf)$-intertwiner
  \be
  \mathrm{SN}_{F}^{\circ}(\surf,\msb) \Colon
  \osnb(\surf,\msb) \rarr~ \mathrm{SN_{\cb'}}(\surf,\df^{F}\msb) \,,
  \label{eq:SNoF}
  \ee
where $\df^{F}\msb$ is an object in the Karoubified cylinder category
$\cyl(\cb',\partial\surf)$. The linear map \eqref{eq:SNoF} is defined by sending
each representing $\cb$-colored graph to the $\cb'$-colored graph obtained via
$F$-conjugation, with full Frobenius graphs added.
Moreover, the collection of such intertwiners corresponding to different
surfaces and boundary data is compatible with the concatenation of string nets.
\end{thm}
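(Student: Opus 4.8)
The plan is to follow the template of the proof of Theorem \ref{thm:universal}, of which the present statement is the natural generalization, replacing the specific forgetful functor $\cu\colon\cfrc\To\cbc$ by an arbitrary rigid separable Frobenius functor $F$. First I would specify the target thickened boundary datum $\df^{F}\msb\iN\cyl(\cb',\partial\surf)$: on each component edge of $\msb$ colored by $f\iN\cb(a,b)$ one places the bimodule $Ff$ over the $\Delta$-separable symmetric Frobenius algebras $F(\id_a)$ and $F(\id_b)$ (cf.\ Remark \ref{rem:FrobFrob}), and the accompanying idempotent in $\ocyl(\cb',\partial\surf)$ is the one cut out by a full Frobenius graph running along the cylinder. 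The candidate map \eqref{eq:SNoF} is then prescribed on a representing $\cb$-colored graph $\grph$ by first performing the $F$-conjugation \eqref{eq:F-conj} edge- and vertex-wise to obtain the partial defect network, and then adjoining a full Frobenius graph in each patch (colored by the algebras $F(\id_a)$ and their structure morphisms); that the result lies in $\mathrm{SN}_{\cb'}(\surf,\df^{F}\msb)$ is immediate from the definition of the boundary idempotent.

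The crux is well-definedness, i.e.\ that this assignment descends from $\fk\mathrm{G}(\surf,\msb)$ to the quotient $\osnb(\surf,\msb)$. By the colimit description of Theorem \ref{thm:SN-colim}, equivalently by the defining local relations for null graphs, it suffices to check that two $\cb$-colored graphs which agree outside an embedded disk $D\hookrightarrow\surf$ and satisfy $\langle\grph_1\cap D\rangle_{\cb}=\langle\grph_2\cap D\rangle_{\cb}$ are sent to $\cb'$-colored graphs representing the same string net. As in the proof of Theorem \ref{thm:universal}, I would first normalize the representatives so that inside $D$ the only Frobenius lines present are, for each pair of distinct connected components of the partial defect network in $D$, a single line joining them; this normal form is available because $F$-conjugates of $2$-morphisms are bimodule morphisms (hence commute with the Frobenius-algebra actions) and because the Frobenius graphs are \emph{full}. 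It then remains to verify the local equality $\langle\grph_1\cap D\rangle_{\cb'}=\langle\grph_2\cap D\rangle_{\cb'}$ after the change of color.

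Here Proposition \ref{prop:CoroDecom} supplies the decisive bookkeeping: the local composition map encoded by $\grph_i\cap D$ factors, up to disjoint union, into partial composites of the four elementary types $\mathrm{(a)}$--$\mathrm{(d)}$. For the operadic compositions $\mathrm{(a)}$ and partial trace maps $\mathrm{(b)}$, Theorem \ref{thm:RSFconj} already asserts that $F$-conjugation commutes with the corresponding linear maps, so these are preserved on the nose. The main obstacle is precisely the horizontal products $\mathrm{(c)}$ and whiskerings $\mathrm{(d)}$, which $F$-conjugation reproduces only up to the separability idempotent \eqref{eq:RSFidem} (equivalently \eqref{eq:uconjidem}). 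This is exactly where the added full Frobenius graphs intervene: a single Frobenius line bridging the two connected components realizes the idempotent \eqref{eq:RSFidem} as a retract, so that inserting it makes the $F$-conjugate of a horizontal product or whiskering agree with the composite of the $F$-conjugates. Since $\grph_1\cap D$ and $\grph_2\cap D$ evaluate through the same decomposition and, by hypothesis, to the same element prior to conjugation, their images coincide, establishing well-definedness.

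Finally, the two remaining assertions are inherited from locality. Equivariance under $\mcg(\surf)$ follows because the map is built patch-by-patch and edge-by-edge from $F$ together with canonical Frobenius data, all of which are preserved by pushforward along homeomorphisms; concretely one runs the same cocone argument as in Theorem \ref{thm:osnb-posnb}, with $F$-conjugation replaced by $F$-conjugation-with-Frobenius-graphs. Compatibility with concatenation reduces, just as in part (ii) of the proof of Theorem \ref{thm:universal}, to the observation that cutting a representative transversally along a separating $1$-manifold and reassembling commutes with the local color-change-plus-Frobenius-graph prescription, since all relevant relations and isotopies are supported inside embedded disks. I expect the normalization of representatives and the bridging-line argument of the third paragraph to be the genuinely delicate point; everything else is a faithful transcription of the $\cu$-case.
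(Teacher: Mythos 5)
Your proposal is correct and takes essentially the same approach as the paper: the paper proves this theorem only by declaring it ``analogous to the argument that yields Theorem \ref{thm:universal}'', and your proof is exactly that analogy spelled out --- normalizing representatives so that inside the disk only single Frobenius lines bridge distinct components of the partial defect network, invoking Theorem \ref{thm:RSFconj} for operadic compositions and partial traces, and using the bridging lines to absorb the separability idempotent \eqref{eq:RSFidem} for horizontal products and whiskerings. Your explicit description of $\df^{F}\msb$ and the cocone/cutting arguments for $\mcg(\surf)$-equivariance and sewing fill in details the paper leaves implicit, but do not constitute a different route.
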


\begin{rem} \label{rem:theend}
The ambient surface $\sws$ of any world sheet $\ws$ can be equipped with the
structure of an open-closed bordism; accordingly we refer to $\sws$ as an 
\emph{ambient bordism} of $\ws$. The boundary parametrization of $\sws$ is
compatible with the sewing intervals and sewing circles that are contained in the
boundary of $\ws$ (see \Cite{Sect.\,2.4}{yangYa3}). Moreover, the parametrization
map for an interval $I$ determines from the complemented world sheet $\cws$
an object in the cylinder category $\ocyl(\cfrc,\tu,I)$.
Conversely, the $\fk$-linear profunctor 
  \be
  \cosnfrc(\surf) \Colon \ocyl(\cfrc,\tu,\alpha_1) \Pto \ocyl(\cfrc,\tu,\alpha_2)
  \ee
that is the value of an open-closed bordism $\surf\colon\alpha_1\PtO\alpha_2$
under the modular functor $\cosnfrc$ classifies all world sheets with prescribed
ambient bordism $\surf$. 
These relationships fit well with the following observation \Cite{Sect.\,9}{yangYa3},
which we will further explore elsewhere: First, the modular functors $\cosnfrc$ and
$\csnc$ can be promoted to symmetric monoidal double functors
  \be
  \dosnfrc\,,\dsnc \Colon \dbord \rarr~ \dprof
  \ee
between the symmetric monoidal double categories $\dbord$ of open-closed bordisms 
(which has orientation preserving embeddings as vertical 1-morphisms) and $\dprof$
of $\fk$-linear profunctors (having linear functors as vertical 1-morphisms). And
second, the universal correlators together with the field functors 
$\{\df_\alpha\colon \ocyl(\cfrc,\tu,\alpha)\Rarr~\cyl(\cc,\alpha)\}_{\alpha\in\bbord}$
fit into a monoidal vertical transformation $\dosnfrc\,{\xRightarrow{~~}}\,\dsnc$
between these symmetric monoidal double functors.
\end{rem}

%%%%%%%%%%%%%%%%%%%%%%%%%%%%%%%%%%%%%%%%%%%%%%%%%%%%%%%%%%%%%%%%%%%%%%%%

\vskip 2.4em

\noindent
{\sc Acknowledgements:}\\[.3em]
We thank Christian Blanchet, Nils Carqueville, Lukas M\"uller and Lukas Woike 
for helpful discussions. We are 
grateful to the Erwin Schr\"odinger International Institute for Mathematics and 
Physics (ESI) for the hospitality during the time part of this work was done.
 \\
JF is supported by VR under projects no.\ 2017-03836 and 2022-02931. CS and YY are 
supported by the Deutsche Forschungsgemeinschaft (DFG, German Research Foundation) 
under SCHW1162/6-1. CS is also supported by DFG under Germany's Excellence Strategy -
EXC 2121 ``Quantum Universe'' - 390833306, and YY by an Oberwolfach Leibniz 
Fellowship of the Mathematical Research Institute at Oberwolfach.

%%%%%%%%%%%%%%%%%%%%%%%%%%%%%%%%%%%%%%%%%%%%%%%%%%%%%%%%%%%%%%%%%%%%%%%%
\newpage
%%%%%%%%%%%%%%%%%%%%%%%%%%%%%%%%%%%%%%%%%%%%%%%%%%%%%%%%%%%%%%%%%%%%%%%%

\newcommand\wb{\,\linebreak[0]} \def\wB {$\,$\wb}
\newcommand\Arep[2]  {{\em #2}, available at {\tt #1}}
  \newcommand\Bach[2]{{\em #2}, Bachelor thesis (#1)}
\newcommand\Bi[2]    {\bibitem[#2]{#1}}
\newcommand\inBO[9]  {{\em #9}, in:\ {\em #1}, {#2}\ ({#3}, {#4} {#5}), p.\ {#6--#7} {\tt [#8]}}
\newcommand\J[7]     {{\em #7}, {#1} {#2} ({#3}) {#4--#5} {{\tt [#6]}}}
\newcommand\JO[6]    {{\em #6}, {#1} {#2} ({#3}) {#4--#5} }
\newcommand\JP[7]    {{\em #7}, {#1} (in press) {{\tt [#6]}}}
\newcommand\BOOK[4]  {{\em #1\/} ({#2}, {#3} {#4})}
\newcommand\PhD[3]   {{\em #3}, Ph.D.\ thesis #1 ({\tt #2})}
\newcommand\PhDo[2]  {{\em #2}, Ph.D.\ thesis #1}
\newcommand\Prep[2]  {{\em #2}, preprint {\tt #1}}
\newcommand\uPrep[2] {{\em #2}, unpublished preprint {\tt #1}}

\def\aagt  {Alg.\wB\&\wB Geom.\wb Topol.}     
\def\adma  {Adv.\wb Math.}
\def\anma  {Ann.\wb Math.}
\def\apcs  {Applied\wB Categorical\wB Structures}
\def\aste  {Ast\'erisque}
\def\coma  {Contemp.\wb Math.}
\def\comp  {Commun.\wb Math.\wb Phys.}
\def\crap  {C.\wb R.\wb Acad.\wb Sci.\wB Paris (S\'erie I -- Math\'ematique)}
\def\ctgc  {Cah.\wb Topol.\wb G\'eom.\wb Diff\'er.\wB Cat\'egoriques}
\def\geat  {Geom.\wB and\wB Topol.}
\def\imrn  {Int.\wb Math.\wb Res.\wb Notices}
\def\inma  {Invent.\wb math.}
\def\jhrs  {J.\wB Homotopy\wB Relat.\wB Struct.}
\def\jktr  {J.\wB Knot\wB Theory\wB and\wB its\wB Ramif.}
\def\joto  {Journal of Topology}
\def\kyjm  {Kyoto J.\ Math.}
\def\mams  {Memoirs\wB Amer.\wb Math.\wb Soc.}
\def\nupb  {Nucl.\wb Phys.\ B}
\def\nyjm  {New\wB York\wB J.\wb Math}
\def\phrb  {Phys.\wb Rev.\ B}
\def\slnc  {Springer\wB Lecture\wB Notes\wB in\wB Comp.\wb Science}
\def\quto  {Quantum Topology}
\def\sigm  {SIGMA}
\def\taac  {Theory\wB and\wB Appl.\wb Categories}
\def\topo  {Topology}
\def\trgr  {Transformation\wB Groups}

%%%%%%%%%%%%%%%%%%%%%%%%%%%
\small

%%%%%%%%%%%%%%%%%%%%%%%%%%%%%%%%%%%%%%%%%%%%%%%%%%%%%%%%%%%%%%%%%%%%%%%%
\end{document}